\renewcommand\eqref[1]{(\ref{#1})} 
\newcommand{\mfL}{\mathfrak{L}}
\newcommand{\mh}{\mathcal{H}}
\title[Heat and wave type equations with non-local operators, II.]{Heat and wave type equations with non-local operators, II. Hilbert spaces and graded Lie groups}
\author[M. Chatzakou]{Marianna Chatzakou}
\address{
	Marianna Chatzakou:
	\endgraf
	Department of Mathematics: Analysis, Logic and Discrete Mathematics
	\endgraf
	Ghent University, Krijgslaan 281, Building S8, B 9000 Ghent
	\endgraf
	Belgium
	\endgraf
	{\it E-mail address} {\rm marianna.chatzakou@ugent.be}}
\author[J. E. Restrepo]{Joel E. Restrepo}
\address{
	Joel E. Restrepo:
	\endgraf
	Department of Mathematics: Analysis, Logic and Discrete Mathematics
	\endgraf
	Ghent University, Krijgslaan 281, Building S8, B 9000 Ghent
	\endgraf
	Belgium
	\endgraf
	{\it E-mail address} {\rm joel.restrepo@ugent.be; cocojoel89@yahoo.es}}
\author[M. Ruzhansky]{Michael Ruzhansky}
\address{
	Michael Ruzhansky:
	\endgraf
	Department of Mathematics: Analysis, Logic and Discrete Mathematics
	\endgraf
	Ghent University, Krijgslaan 281, Building S8, B 9000 Ghent
	\endgraf
	Belgium
	\endgraf
	and
	\endgraf
	School of Mathematical Sciences
		\endgraf Queen Mary University of London 
			\endgraf
		United Kingdom
			\endgraf
	{\it E-mail address} {\rm michael.ruzhansky@ugent.be}}
\subjclass[2010]{45N05, 47A70, 42B05, 35B40, 58C50.}
\keywords{Separable Hilbert spaces, Densely defined linear operators, Self-adjoint operators, Discrete spectrum, Heat type equations, Wave type equations, Explicit solutions, Asymptotic time estimates, Graded Lie groups.}
\newtheoremstyle{theorem}
{10pt}          
{10pt}  
{\sl}  
{\parindent}     
{\bf}  
{. }    
{ }    
{}     
\theoremstyle{theorem}
\numberwithin{equation}{section}
\theoremstyle{plain}
\newtheorem{thm}{Theorem}[section]
\theoremstyle{definition}
\newtheorem{defn}[thm]{Definition}
\newtheorem{rem}[thm]{Remark}
\newtheorem{ex}[thm]{Example}
\newtheoremstyle{defi}
{10pt}          
{10pt}  
{\rm}  
{\parindent}     
{\bf}  
{. }    
{ }    
{}     
\theoremstyle{defi}
\begin{document}
 	\begin{abstract}
We study heat and wave type equations on a separable Hilbert space $\mathcal{H}$ by considering non-local operators in time with any positive densely defined linear operator with discrete spectrum. We show the explicit representation of the solution and analyse the time-decay rate in a scale of suitable Sobolev space. We perform similar analysis on multi-term heat and multi-wave type equations. The main tool here is the Fourier analysis which can be developed in a separable Hilbert space based on the linear operator involved. As an application, the same Cauchy problems are considered and analysed in the setting of a graded Lie group. In this case our analysis relies on the group Fourier analysis. An extra ingredient in this framework allows, in the case of heat type equations, to establish $L^p$-$L^q$ estimates for $1\leqslant p\leqslant 2\leqslant q<+\infty$ for the solutions on graded Lie group groups. Examples and applications of the developed theory are given, either in terms of self-adjoint operators on compact or non-compact manifolds, or in the case of particular settings of graded Lie groups. The results of this paper significantly extend in different directions the results of Part I (\cite{WRR}), where operators on compact Lie groups were considered. We note that the results obtained in this paper are also new already in the Euclidean setting of $\mathbb{R}^n$. 
	\end{abstract}
	\maketitle
	\tableofcontents

\section{Introduction}

Evolution integral equations have been studied by many authors due to their applications in different fields, see e.g. the entire monographs \cite{mainardi,pruss,tricomi}. This branch can model problems as the theory of viscoelastic material behaviour. We can have simple shearing motions, torsion of a rod, simple tension, etc. Also, it is related with viscoelastic fluids and beams, heat conduction with memory, electrodynamics with memory, etc. Here we consider integro-differential operators in time which kernels are $\frac{t^{-\beta}}{\Gamma(1-\beta)}$ $(0<\beta\leqslant 1)$ or $\frac{t^{1-\beta}}{\Gamma(2-\beta)}$ $(1<\beta<2)$. In fact, we will use non-local operators (in time) of the form
\[
\int_0^t \frac{(t-s)^{n-\beta-1}}{\Gamma(n-\beta)}\partial_{s}^{(n)}w(x,s)\,\mathrm{d}s,\quad 0<\beta<2\,\,(\beta\neq1),\quad n=\lfloor\beta\rfloor+1, 
\]
to investigate heat and wave type equations. For the range $0<\beta<1$, the equation is modelling the fractional relaxation, while for $1<\beta<2$, the equation is refereed to as  the fractional oscillation of the system. A classical work on this direction was given by Riesz in \cite{Riesz} where he studied the Cauchy problem by the Riemann-Liouville fractional operators. Contemporary investigations on Cauchy problems, diffusion and wave equations, etc., can be found e.g. in \cite{Fujita1,Fujita2,memory1,memory2,FractionalDiffusion}. These preliminary studies were principally given on the real axis. On $\mathbb{R}^n$, we can find several works attempting questions on the well-posedness, regularity and decay rate behaviour of the solutions, see e.g. \cite{para,cauchy,uno2,uno3,uno4,uno5}. 

Notice that the studies of fractional evolution equations are becoming more popular in the last 20 years since it has shown to have an intrinsic nature and good properties which are difficult to realise  from the general theory of integral equations. Let us mention the following works \cite{thesis2001,thesis,hilbert,section3} and the references therein. Some of the works have been performed in very general settings like Banach or Hilbert spaces. Each space has their own advantages, disadvantages and inherent features. 

In this paper we focus on analysing heat and wave type equations by using integro-differential operators (in time) on a separable Hilbert space. Here we exploit the Fourier analysis which can be obtained from the consideration of a linear operator densely defined with discrete spectrum \cite{[15],fourier-1,[51]-1}, see also \cite{RNT}. In particular, we use the Fourier analysis of a graded Lie group to show the applicability of the general results. The first step in this direction was given recently in \cite{RNT}. In the last cited paper, we can see that the considered equations are restricted to some boundary conditions, even in time where it is conditioned to $0<t\leqslant T<+\infty.$ Now we provide a more general type of equations without such restrictions that allows to investigative the time decay rate in all the cases for any $t\geqslant 0$, excluding just the case of multi-term type equations where $t<T$. This paper is a continuation of the results given on compact Lie groups in \cite{WRR}. One of the new ingredients of our analysis is the fact that the multivariate Mittag-Leffler function emerged naturally as the inverse Laplace transform of a function, see \cite{new-mittag-add,17} and also \cite{laplace-inverse}.          

\medskip Now we give a brief description of the main results of this article. 

We begin Section \ref{preli} by recalling some fundamentals aspects of fractional evolution equation on Banach spaces. We also recall the Fourier analysis on  Hilbert spaces and on  graded Lie groups. These elements will be used in the whole paper. 

In Section \ref{explicit}, we give the main results of this manuscript about the explicit representation and time decay rate of the solutions of the considered heat and wave type equations. In particular, we establish the following statements.

Below we always consider $\mfL:\text{Dom}(\mfL)\subset\mathcal{H}\to\mathcal{H}$ to be a positive linear operator densely defined with discrete spectrum $\sigma(\mfL)=\{\gamma_\zeta:\zeta\in I\}$ ($I$ is a countable set of indices) on a separable Hilbert space $\mathcal{H}$, with the system of eigenfunctions $\{e_\zeta\}_{\zeta\in I}$ of $\mfL$ an orthonormal basis in $\mathcal{H}$. Thus, we can consider e.g. Bessel or differential operators, and also problems with respect to Sturm-Liouville, harmonic oscillator, anharmonic oscillator, Landau Hamiltonian, etc. See \cite{[15],fourier-1,RNT} and the examples in Section \ref{examples} for more details. 

Notice that compact Lie groups under the consideration of operators like Laplacian or sub-Laplacian will fit as a particular case of this analysis. Here we can recover some of the results of \cite{WRR}. Moreover, our results can also involve compact and non-compact manifolds \cite{[15]}.

We also use a Sobolev space $\mathcal{H}_{\mathfrak{L}}^{\delta}$ $(\delta\in\mathbb{R})$ defined in Subsection \ref{FA-hilbert}. More details can be found there. We note that, in the representation of the solution,  the well-known Mittag-Leffler function appears, see e.g. the entire book dedicated to the study of different types of Mittag-Leffler functions, its properties, etc \cite{mittag}.

\medskip Let us consider the {\bf $\mfL$-heat type equation} 
\begin{equation}\label{heat-FDE-intro}
\prescript{C}{0}{\partial}^{\beta}w(t)+\mathfrak{L}w(t)=0,\qquad t>0,\quad 0<\beta\leqslant 1,
\end{equation}
where $\prescript{C}{0}{\partial}^{\beta}$ is the Dzhrbashyan-Caputo fractional derivative as in \eqref{djerbashian}, and the operator $\mfL$ is considered to be positive,
under the initial condition
\[
w(t)|_{t=0}=w_0\in\mathcal{H}.
\] 
For the definition of the Sobolev spaces $\mathcal{H}_{\mathfrak{L}}^{\delta}$, $\delta \in \mathbb{R}$, which appear in our first result given below, we refer to Subsection \ref{FA-hilbert}. 
\begin{thm}
Let $0<\beta\leqslant1$ and $\delta\in\mathbb{R}$. 
\begin{enumerate}
    \item If $0\in\sigma(\mfL)$ and $w_0\in\mathcal{H}_{\mathfrak{L}}^{\delta}$ then there exists a unique continuous solution $w(t)\in \mathcal{H}^{\delta}_{\mfL}$ for any $t\in(0,+\infty)$ of equation \eqref{heat-FDE-intro} given by
\begin{equation}\label{solution-heat-intro}
w(t)=\sum_{\zeta\in I}(w_0,e_\zeta)E_{\beta}(-\gamma_\zeta t^{\beta})e_{\zeta},
\end{equation}
that satisfies the following estimate
\[
\|w(t)\|_{\mathcal{H}^{\delta}_{\mathfrak{L}}}\lesssim \|w_0\|_{\mathcal{H}_{\mathfrak{L}}^{\delta}},\quad t\geqslant0.
\]
\item If $0\notin\sigma(\mfL)$ and $w_0\in\mathcal{H}_{\mathfrak{L}}^{\delta}$ then there exists a unique continuous solution $w(t)\in \mathcal{H}^{\delta}_{\mfL}$ for any $t\in(0,+\infty)$ of equation \eqref{heat-FDE-intro} represented by \eqref{solution-heat-intro} 
such that 
\[
\|w(t)\|_{\mathcal{H}^{\delta}_{\mathfrak{L}}}\lesssim (1+t^{\beta})^{-1}\|w_0\|_{\mathcal{H}_{\mathfrak{L}}^{\delta}},\quad t\geqslant0.
\]
\item If $w_0\in\mathcal{H}_{\mathfrak{L}}^{\delta-2}$ then there exists a unique continuous solution $w(t)\in \mathcal{H}^{\delta}_{\mfL}$ for any $t\in(0,+\infty)$ of equation \eqref{heat-FDE-intro} given by \eqref{solution-heat-intro} and we have
\[
\|w(t)\|_{\mathcal{H}^{\delta}_{\mathfrak{L}}}\lesssim (1+t^{-\beta})\|w_0\|_{\mathcal{H}_{\mathfrak{L}}^{\delta-2}},\quad t>0.
\]
\end{enumerate}
\end{thm}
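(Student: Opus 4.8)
The plan is to diagonalise \eqref{heat-FDE-intro} by means of the $\mfL$-Fourier analysis recalled in Subsection \ref{FA-hilbert}. Writing $\widehat{w}_\zeta(t):=(w(t),e_\zeta)$ and pairing the equation with each basis element $e_\zeta$, the relation $\mfL e_\zeta=\gamma_\zeta e_\zeta$ turns the single operator equation into the countable, mutually decoupled family of scalar Caputo problems
\begin{equation*}
\prescript{C}{0}{\partial}^{\beta}\widehat{w}_\zeta(t)+\gamma_\zeta\,\widehat{w}_\zeta(t)=0,\qquad \widehat{w}_\zeta(0)=(w_0,e_\zeta),\qquad \zeta\in I.
\end{equation*}
Each of these is solved explicitly: applying the Laplace transform and using $\mathcal{L}\{\prescript{C}{0}{\partial}^{\beta}f\}(s)=s^{\beta}\widetilde{f}(s)-s^{\beta-1}f(0)$ gives $\widetilde{w}_\zeta(s)=(w_0,e_\zeta)\,s^{\beta-1}/(s^{\beta}+\gamma_\zeta)$, whose inverse Laplace transform is the Mittag-Leffler expression $\widehat{w}_\zeta(t)=(w_0,e_\zeta)E_{\beta}(-\gamma_\zeta t^{\beta})$. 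Reassembling the Fourier coefficients produces the candidate solution \eqref{solution-heat-intro}.

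Once the representation is in hand, all three estimates are consequences of the single classical bound
\begin{equation*}
0\leqslant E_{\beta}(-x)\leqslant \frac{C_\beta}{1+x},\qquad x\geqslant0,\quad 0<\beta\leqslant1,
\end{equation*}
together with the definition of the $\mathcal{H}^{\delta}_{\mfL}$-norm, which weights the $\zeta$-th Fourier coefficient by $(1+\gamma_\zeta)^{\delta}$. For (1) I would simply use $E_\beta(-\gamma_\zeta t^{\beta})\leqslant C_\beta$ uniformly in $\zeta$ and $t$ (the value on the zero mode being exactly $1$), which reproduces the coefficients of $w_0$ up to a constant. For (2), the hypothesis $0\notin\sigma(\mfL)$ furnishes a spectral gap $\gamma_\zeta\geqslant\gamma_\ast>0$, so the bound gives $E_\beta(-\gamma_\zeta t^{\beta})\lesssim(1+\gamma_\ast t^{\beta})^{-1}\lesssim(1+t^{\beta})^{-1}$ uniformly in $\zeta$, and the factor passes through the norm.

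The smoothing estimate (3) is where the main work lies. Here $w_0$ is assumed two orders rougher, and one must absorb one power of $\mfL$ into the decay of $E_\beta$ at the cost of a time-singular factor. Concretely, the bound above yields $(1+\gamma_\zeta)E_\beta(-\gamma_\zeta t^{\beta})\lesssim (1+\gamma_\zeta)/(1+\gamma_\zeta t^{\beta})$, and an elementary monotonicity computation shows that this ratio is $\lesssim 1+t^{-\beta}$, uniformly in $\gamma_\zeta\geqslant0$, splitting into the regimes $t\geqslant1$ and $t<1$. Inserting this into the squared $\mathcal{H}^{\delta}_{\mfL}$-norm shifts the weight from $(1+\gamma_\zeta)^{\delta}$ to $(1+\gamma_\zeta)^{\delta-2}$ and extracts the factor $(1+t^{-\beta})^2$, giving exactly the claimed inequality for $t>0$.

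Finally, I would close the loop on the qualitative assertions. Convergence of \eqref{solution-heat-intro} in $\mathcal{H}^{\delta}_{\mfL}$ and continuity of $t\mapsto w(t)$ follow from the uniform-in-$\zeta$ bounds just used together with dominated convergence applied to the series of Fourier coefficients, each of which is continuous in $t$; membership of $w(t)$ in the stated space is read off from the estimates themselves. Uniqueness reduces, after taking Fourier coefficients, to uniqueness for the scalar problems, which is standard for the Caputo equation with given initial value. I expect the only genuinely delicate point to be the justification of the termwise action of $\prescript{C}{0}{\partial}^{\beta}$ and of $\mfL$ on the series, that is, that the formal solution actually solves \eqref{heat-FDE-intro} in $\mathcal{H}$, which again rests on the quantitative Mittag-Leffler bounds controlling the tails uniformly on compact time intervals.
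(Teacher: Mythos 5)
Your proposal is correct and follows essentially the same route as the paper: diagonalisation by the $\mfL$-Fourier transform into scalar Caputo problems, Laplace transform yielding the Mittag-Leffler representation \eqref{solution-heat-intro}, and all three estimates derived from the uniform bound $E_{\beta}(-x)\leqslant C(1+x)^{-1}$, with the spectral gap giving (2) and the splitting $\frac{1+\gamma_\zeta}{1+\gamma_\zeta t^{\beta}}\leqslant 1+t^{-\beta}$ giving (3). The only cosmetic difference is that the paper reduces the Sobolev estimates to the case $\delta=2$ by spectral calculus, whereas you carry the weight $(1+\gamma_\zeta)^{\delta}$ directly; the substance is identical.
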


The next result is on the solution to the {\bf $\mfL$-wave type equation} that reads as follows
\begin{equation}\label{wave-FDE-intro}
\prescript{C}{0}{\partial}^{\beta}w(t)+\mathfrak{L}w(t)=0,\qquad t>0,\quad 1<\beta<2,
\end{equation}
under the initial conditions
\[
\partial_t^{(k)}w(t)|_{t=0 }=w_k\in\mathcal{H},\quad k=0,1\,.
\]

\begin{thm}
Let $1<\beta<2$ and $\delta\in\mathbb{R}$. 
\begin{enumerate}
    \item If $0\in\sigma(\mfL)$ and $w_0,w_1\in\mathcal{H}_{\mathfrak{L}}^{\delta}$ then there exists a unique continuous solution $w(t)\in \mathcal{H}^{\delta}_{\mfL}$ for any $t\in(0,+\infty)$ of equation \eqref{wave-FDE-intro} represented by
\begin{equation}\label{solution-wave-intro}
w(t)=\sum_{\zeta\in I}\Bigg[(w_0,e_\zeta)_{\mathcal{H}}E_{\beta}(-t^{\beta}\gamma_\zeta)+t(w_1,e_\zeta)_{\mathcal{H}}E_{\beta,2}(-t^{\beta}\gamma_\zeta)\Bigg]e_{\zeta},
\end{equation}
and we have
\[
\|w(t)\|_{\mathcal{H}^{\delta}_{\mathfrak{L}}}\lesssim \|w_0\|_{\mathcal{H}_{\mathfrak{L}}^{\delta}}+t\|w_1\|_{\mathcal{H}_{\mathfrak{L}}^{\delta}},\quad t\geqslant0.
\]
\item If $0\notin\sigma(\mfL)$ and $w_0,w_1\in\mathcal{H}_{\mathfrak{L}}^{\delta}$ then there exists a unique continuous solution $w(t)\in \mathcal{H}^{\delta}_{\mfL}$ for any $t\in(0,+\infty)$ of equation \eqref{wave-FDE-intro} given by \eqref{solution-wave-intro} with \[
\|w(t)\|_{\mathcal{H}^{\delta}_{\mathfrak{L}}}\lesssim (1+t^{\beta})^{-1}\|w_0\|_{\mathcal{H}_{\mathfrak{L}}^{\delta}}+t(1+t^{\beta})^{-1}\|w_1\|_{\mathcal{H}_{\mathfrak{L}}^{\delta}},\quad t\geqslant0.
\]
\item If $(w_0,w_1)\in\bigg(\mathcal{H}_{\mathfrak{L}}^{\delta},\mathcal{H}_{\mathfrak{L}}^{\delta-\frac{2}{\beta}}\bigg)$ then there exists a unique continuous solution $w(t)\in \mathcal{H}^{\delta}_{\mfL}$ for any $t\in(0,+\infty)$ of equation \eqref{wave-FDE-intro} represented by \eqref{solution-wave-intro} such that 
\[
\|w(t)\|_{\mathcal{H}^{\delta}_{\mathfrak{L}}}\lesssim \|w_0\|_{\mathcal{H}_{\mathfrak{L}}^{\delta}}+(1+t)\|w_1\|_{\mathcal{H}_{\mathfrak{L}}^{\delta-\frac{2}{\beta}}},\quad t\geqslant0.
\]
\item If $(w_0,w_1)\in\big(\mathcal{H}_{\mathfrak{L}}^{\delta},\mathcal{H}_{\mathfrak{L}}^{\delta-2}\big)$ then there exists a unique continuous solution $w(t)\in \mathcal{H}^{\delta}_{\mfL}$ for any $t\in(0,+\infty)$ of equation \eqref{wave-FDE-intro} given by \eqref{solution-wave-intro} and we have 
\[
\|w(t)\|_{\mathcal{H}^{\delta}_{\mathfrak{L}}}\lesssim \|w_0\|_{\mathcal{H}_{\mathfrak{L}}^{\delta}}+t(1+t^{-\beta})\|w_1\|_{\mathcal{H}_{\mathfrak{L}}^{\delta-2}},\quad t>0.
\]
\end{enumerate}
\end{thm}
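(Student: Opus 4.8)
The plan is to reduce the operator equation \eqref{wave-FDE-intro} to a family of scalar fractional Cauchy problems by means of the $\mfL$-Fourier analysis of Subsection \ref{FA-hilbert}, and then to control the resulting Mittag-Leffler coefficients in the Sobolev scale $\mathcal{H}^{\delta}_{\mfL}$. Writing $\widehat{w}(t,\zeta)=(w(t),e_\zeta)_{\mathcal{H}}$ and pairing \eqref{wave-FDE-intro} with $e_\zeta$, the diagonalisation of $\mfL$ on its eigenbasis $\{e_\zeta\}_{\zeta\in I}$ turns the problem into
\begin{equation*}
\prescript{C}{0}{\partial}^{\beta}\widehat{w}(t,\zeta)+\gamma_\zeta\,\widehat{w}(t,\zeta)=0,\qquad \widehat{w}(0,\zeta)=(w_0,e_\zeta)_{\mathcal{H}},\quad \partial_t\widehat{w}(0,\zeta)=(w_1,e_\zeta)_{\mathcal{H}}.
\end{equation*}
By the standard Laplace transform method for fractional ODEs and inversion, each coefficient equals $\widehat{w}(t,\zeta)=(w_0,e_\zeta)E_\beta(-\gamma_\zeta t^\beta)+t(w_1,e_\zeta)E_{\beta,2}(-\gamma_\zeta t^\beta)$, which, reassembled along $\{e_\zeta\}$, yields \eqref{solution-wave-intro}. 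I would then read off the data from $E_\beta(0)=1$ and $\frac{d}{dt}\big[tE_{\beta,2}(-\gamma_\zeta t^\beta)\big]\big|_{t=0}=1$ (the $w_0$-term contributing $0$ to the first derivative at $t=0$ since $1<\beta$), and obtain uniqueness from the uniqueness of the scalar problems.

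The analytic core is a pair of uniform bounds on the Mittag-Leffler functions valid for $1<\beta<2$ and $x\geqslant0$, namely boundedness $|E_\beta(-x)|\leqslant C$, $|E_{\beta,2}(-x)|\leqslant C$ together with the sharper
\begin{equation*}
|E_\beta(-x)|\leqslant \frac{C}{1+x},\qquad |E_{\beta,2}(-x)|\leqslant\frac{C}{1+x},
\end{equation*}
which follow from the sector asymptotics of $E_{\beta,\alpha}$ evaluated on the negative real axis together with boundedness near the origin. All four estimates then come from inserting $\widehat{w}(t,\zeta)$ into the norm identity $\|w(t)\|_{\mathcal{H}^{\delta}_{\mfL}}^2=\sum_{\zeta\in I}(1+\gamma_\zeta)^{\delta}|\widehat{w}(t,\zeta)|^2$ of Subsection \ref{FA-hilbert} and estimating the two-parameter weights in $(t,\gamma_\zeta)$.

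For (1) I would use only boundedness of $E_\beta,E_{\beta,2}$, giving the $\|w_0\|+t\|w_1\|$ bound. For (2) the spectral gap $\gamma_\zeta\geqslant c_0>0$ upgrades $\frac{1}{1+\gamma_\zeta t^\beta}$ to $\frac{C}{1+t^\beta}$ uniformly in $\zeta$, producing the decaying prefactors. Parts (3) and (4) are the genuine trade-off estimates, where regularity of $w_1$ is exchanged for time growth: using $|E_{\beta,2}(-\gamma_\zeta t^\beta)|\leqslant C/(1+\gamma_\zeta t^\beta)$ it suffices to prove the elementary inequalities
\begin{equation*}
\frac{t(1+\gamma_\zeta)^{1/\beta}}{1+\gamma_\zeta t^\beta}\lesssim 1+t,\qquad \frac{1+\gamma_\zeta}{1+\gamma_\zeta t^\beta}\lesssim 1+t^{-\beta},
\end{equation*}
which feed Parts (3) and (4) respectively, while the $w_0$-contribution in both is handled by plain boundedness of $E_\beta$. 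The first inequality follows from $(1+\gamma_\zeta)^{1/\beta}\leqslant 1+\gamma_\zeta^{1/\beta}$ (subadditivity of $x\mapsto x^{1/\beta}$) and the substitution $u=\gamma_\zeta t^\beta$, under which $t\gamma_\zeta^{1/\beta}=u^{1/\beta}$ and $u^{1/\beta}/(1+u)$ is bounded on $[0,\infty)$; the second from splitting $\frac{1+\gamma_\zeta}{1+\gamma_\zeta t^\beta}=\frac{1}{1+\gamma_\zeta t^\beta}+\frac{\gamma_\zeta}{1+\gamma_\zeta t^\beta}\leqslant 1+t^{-\beta}$.

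I expect the main obstacle to be not these estimates but the rigorous justification of well-posedness: that the series \eqref{solution-wave-intro} may be differentiated term by term in the Dzhrbashyan--Caputo sense so that it genuinely solves \eqref{wave-FDE-intro}, and that $t\mapsto w(t)$ is continuous into $\mathcal{H}^{\delta}_{\mfL}$. Both rely on the uniform Mittag-Leffler bounds to dominate the series and then on dominated convergence; the degeneration as $t\to0^+$ in Part (4) (reflected by the factor $t^{-\beta}$, whence the restriction $t>0$ rather than $t\geqslant0$) is exactly where the term-by-term argument is most delicate and must be carried out on $(0,\infty)$.
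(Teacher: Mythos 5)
Your proposal is correct and follows essentially the same route as the paper: the $\mfL$-Fourier reduction to scalar Caputo problems, Laplace inversion yielding the representation \eqref{solution-wave-intro}, the uniform Mittag-Leffler bound $|E_{\beta,\rho}(-x)|\leqslant C/(1+x)$ on the negative real axis, boundedness for part (1), the spectral gap for part (2), and regularity-for-time trade-offs for parts (3)--(4). Your inequality $\dfrac{t(1+\gamma_\zeta)^{1/\beta}}{1+\gamma_\zeta t^{\beta}}\lesssim 1+t$ (via subadditivity and the substitution $u=\gamma_\zeta t^{\beta}$) is the same computation as the paper's bound $\dfrac{t\gamma_\zeta}{1+\gamma_\zeta t^{\beta}}\leqslant \gamma_\zeta\sup_{t>0}\dfrac{t}{1+\gamma_\zeta t^{\beta}}\lesssim \gamma_\zeta^{1-\frac{1}{\beta}}$, and your second inequality matches the paper's splitting $\dfrac{1+\gamma_\zeta}{1+\gamma_\zeta t^{\beta}}\leqslant 1+t^{-\beta}$, so the two arguments agree up to presentation (you work at general $\delta$ directly, the paper reduces to $\delta=2$ by spectral calculus).
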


We also study the {\bf multi-term $\mfL$-wave-heat type equation}:
 \begin{equation}\label{multi-wave-intro}
\left\{ \begin{aligned}
\prescript{C}{}\partial_{t}^{\beta}w(t)+\sigma_1\prescript{C}{}\partial_{t}^{\beta_1}w(t)+\cdots+\sigma_m\prescript{C}{}\partial_{t}^{\beta_m}w(t)+\mfL w(t)&=0,  \\
\partial_t^{(k)}w(t)\big|_{t=0^+}\big.&=w_k\in\mathcal{H},\quad k=0,1,
\end{aligned}
\right.
\end{equation}
for $0<t\leqslant T<+\infty$ (this restriction is associated with the boundedness of the solution), where $\sigma_i\geqslant0$ $(i=1,\ldots,m)$ and $2>\beta>\beta_1>\cdots>\beta_{m}>0$. Here we have two cases. First, if $\beta\in(0,1]$ then we consider just the initial condition $w_0.$ The second one, if $\beta\in(1,2)$, we then use both initial conditions $w_0,w_1\in\mathcal{H}.$   
\begin{thm}
Let $\delta\in\mathbb{R}$ and $2>\beta>\beta_1>\cdots>\beta_{m}>0$. 
\begin{enumerate}
    \item If $0\in\sigma(\mfL)$ and $w_0,w_1\in \mathcal{H}^{\delta}_{\mfL}$ then there exists a unique solution $w(t)\in \mathcal{H}^{\delta}_{\mathfrak{L}}$ for any $t\in(0,T]$ for the equation \eqref{multi-wave-intro} given by
\begin{align}\label{solution-wave-multi-intro}
w(t)&=\sum_{k=0}^{m}\sigma_k t^{\beta-\beta_k}\sum_{\zeta\in I}(w_0,e_\zeta)_{\mathcal{H}}E_{(\beta-\beta_1,\ldots,\beta-\beta_m,\beta),\beta-\beta_k+1}(-\sigma_1 t^{\beta-\beta_1},\ldots \\
&\hspace{8cm}\ldots,-\sigma_m t^{\beta-\beta_m},-\gamma_\zeta t^{\beta}), \nonumber \\
&+\sum_{k=0}^{m}\sigma_k t^{\beta-\beta_k+1}\sum_{\zeta\in I}(w_1,e_\zeta)_{\mathcal{H}}E_{(\beta-\beta_1,\ldots,\beta-\beta_m,\beta),\beta-\beta_k+2}(-\sigma_1 t^{\beta-\beta_1},\ldots \nonumber \\
&\hspace{8cm}\ldots,-\sigma_m t^{\beta-\beta_m},-\gamma_\zeta t^{\beta}), \nonumber
\end{align}
where $\sigma_0=1, \beta_0=\beta$ and it follows that 
\begin{align*}
\|w(t)\|_{\mathcal{H}_\mfL^{\delta}}\leqslant C_{T,\vec{\beta},\vec{\sigma}} \left(\sum_{k=0}^{m}\sigma_k t^{\beta-\beta_k}\right)\big(\|w_0\|_{\mathcal{H}_\mfL^{\delta}}+t\|w_1\|_{\mathcal{H}_\mfL^{\delta}}\big),  
\end{align*}
for some constant which depends on $T$, $\vec{\beta}=(\beta,\ldots,\beta_m)$ and $\vec{\sigma}=(\sigma_0,\ldots,\sigma_m).$

\item If $0\notin\sigma(\mfL)$ and $w_0,w_1\in \mathcal{H}^{\delta}_{\mfL}$ then there exists a unique solution $w(t)\in \mathcal{H}^{\delta}_{\mathfrak{L}}$ for any $t\in(0,T]$ for the equation \eqref{multi-wave-intro} given by \eqref{solution-wave-multi-intro} and we have 
\begin{align*}
\|w(t)\|_{\mathcal{H}_\mfL^{\delta}}\leqslant C_{T,\vec{\beta},\vec{\sigma}} \left(\sum_{k=0}^{m}\sigma_k t^{\beta-\beta_k}\right)(1+t^{\beta})^{-1}\big(\|w_0\|_{\mathcal{H}_\mfL^{\delta}}+t\|w_1\|_{\mathcal{H}_\mfL^{\delta}}\big). 
\end{align*}
\item If $(w_0,w_1)\in\bigg( \mathcal{H}^{\delta}_{\mfL}\times \mathcal{H}^{\delta-\frac{2}{\beta}}_{\mfL}\bigg)$ then there exists a unique solution $w(t)\in \mathcal{H}^{\delta}_{\mathfrak{L}}$ for any $t\in(0,T]$ for the equation \eqref{multi-wave-intro} given by \eqref{solution-wave-multi-intro} and we have 
\begin{align*}
\|w(t)\|_{\mathcal{H}_\mfL^{\delta}}\leqslant C_{T,\vec{\beta},\vec{\sigma}}\left(\sum_{k=0}^{m}\sigma_k t^{\beta-\beta_k}\right)\bigg(\|w_0\|_{\mathcal{H}_\mfL^{\delta}}+(1+t)\|w_1\|_{\mathcal{H}_{\mfL}^{\delta-\frac{2}{\beta}}}\bigg).
\end{align*}
\item If $w_0,w_1\in \mathcal{H}^{\delta-2}_{\mfL}$ then there exists a unique solution $w(t)\in \mathcal{H}^{\delta}_{\mathfrak{L}}$ for any $t\in(0,T]$ for the equation \eqref{multi-wave-intro} given by \eqref{solution-wave-multi-intro} and we have 
\begin{align*}
\|w(t)\|_{\mathcal{H}_\mfL^{\delta}}\leqslant C_{T,\vec{\beta},\vec{\sigma}}(1+t^{-\beta})\left(\sum_{k=0}^{m}\sigma_k t^{\beta-\beta_k}\right)\big(\|w_0\|_{\mathcal{H}_\mfL^{\delta-2}}+t\|w_1\|_{\mathcal{H}_\mfL^{\delta-2}}\big).
\end{align*}
\end{enumerate}
\end{thm}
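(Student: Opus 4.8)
The plan is to diagonalise \eqref{multi-wave-intro} through the $\mfL$-Fourier analysis, solve the resulting scalar multi-term fractional equations by the Laplace transform --- which is where the multivariate Mittag-Leffler functions enter --- and finally estimate the solution series in $\mathcal{H}^\delta_{\mfL}$ by means of uniform bounds on these functions. Concretely, I would seek $w(t)=\sum_{\zeta\in I}\widehat w(t,\zeta)\,e_\zeta$ with $\widehat w(t,\zeta):=(w(t),e_\zeta)_{\mathcal{H}}$; since $\mfL e_\zeta=\gamma_\zeta e_\zeta$ and each Caputo operator acts only in $t$, taking the inner product of \eqref{multi-wave-intro} with $e_\zeta$ decouples the problem into the scalar family
\[
\prescript{C}{}\partial_t^{\beta}\widehat w(t,\zeta)+\sum_{k=1}^{m}\sigma_k\,\prescript{C}{}\partial_t^{\beta_k}\widehat w(t,\zeta)+\gamma_\zeta\,\widehat w(t,\zeta)=0,
\]
with $\widehat w(0,\zeta)=(w_0,e_\zeta)_{\mathcal{H}}$ and, when $\beta\in(1,2)$, additionally $\partial_t\widehat w(0,\zeta)=(w_1,e_\zeta)_{\mathcal{H}}$. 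Uniqueness for \eqref{multi-wave-intro} then reduces to uniqueness for these scalar equations.

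Next I would apply the Laplace transform $\mathscr{L}$ in $t$, using $\mathscr{L}[\prescript{C}{}\partial_t^{\alpha}f](s)=s^{\alpha}\mathscr{L}[f](s)-\sum_{j=0}^{\lceil\alpha\rceil-1}s^{\alpha-1-j}f^{(j)}(0)$, which turns each scalar equation into an algebraic relation for $\mathscr{L}[\widehat w](s,\zeta)$. Solving it and grouping the initial terms yields an expression that is a superposition, over $k=0,\dots,m$, of elementary blocks in the variables $s^{\beta-\beta_k}$ and $\gamma_\zeta$; the analytic fact highlighted in the introduction (cf.\ \cite{new-mittag-add,17,laplace-inverse}) is that the inverse Laplace transform of each such block is precisely a multivariate Mittag-Leffler function $E_{(\beta-\beta_1,\dots,\beta-\beta_m,\beta),\,b}$. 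Inverting term by term reproduces the closed form \eqref{solution-wave-multi-intro}, and substituting the series back (via the power-series definition of these functions) confirms both the equation and the prescribed data; setting all $\sigma_i=0$ collapses the formula to the single-term solutions \eqref{solution-heat-intro}, \eqref{solution-wave-intro}, a useful consistency check.

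The decisive step is the $\mathcal{H}^\delta_{\mfL}$-estimate. Writing $\|w(t)\|_{\mathcal{H}^\delta_{\mfL}}^2=\sum_{\zeta\in I}(1+\gamma_\zeta)^{\delta}\,|\widehat w(t,\zeta)|^2$, I would control each coefficient through a uniform bound of the type
\[
\Big|E_{(\beta-\beta_1,\dots,\beta-\beta_m,\beta),\,b}\big(-\sigma_1t^{\beta-\beta_1},\dots,-\sigma_mt^{\beta-\beta_m},-\gamma_\zeta t^{\beta}\big)\Big|\leqslant \frac{C_{T,\vec\beta,\vec\sigma}}{1+\gamma_\zeta t^{\beta}},\qquad 0<t\leqslant T.
\]
The point is that for $t\leqslant T$ the first $m$ arguments stay in a fixed compact set depending on $T,\vec\sigma,\vec\beta$, so that only the last argument $\gamma_\zeta t^{\beta}$ may be large, and there the function decays like its reciprocal. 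Inserting this together with the explicit prefactors $t^{\beta-\beta_k}$ and $t^{\beta-\beta_k+1}$ appearing in \eqref{solution-wave-multi-intro} factors out the common $\sum_{k=0}^{m}\sigma_k t^{\beta-\beta_k}$ and the data norms. Case (1) uses only the boundedness of this bound; case (2) upgrades it via the spectral gap $\gamma_\zeta\geqslant\gamma_{\min}>0$ to the decay factor $(1+t^{\beta})^{-1}$; cases (3) and (4) trade the factor $(1+\gamma_\zeta t^{\beta})^{-1}$ against the Sobolev weights $(1+\gamma_\zeta)^{\delta}$, which is exactly what forces the shifts $\delta-\tfrac{2}{\beta}$ and $\delta-2$ on the data and produces the singular factor $(1+t^{-\beta})$ at $t=0$ in case (4).

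I expect the main obstacle to be establishing the displayed Mittag-Leffler bound with the correct $\gamma_\zeta$-uniform reciprocal decay. Unlike the single-term setting, where the classical asymptotics of $E_\beta$ yield clean global-in-time estimates, the multivariate functions need not decay in $t$; the uniform control of the first $m$ arguments is available only on a finite interval, and the constant $C_{T,\vec\beta,\vec\sigma}$ genuinely depends on $T$. This is precisely the source of the restriction $0<t\leqslant T$ in \eqref{multi-wave-intro} and of the $T$-dependence of all four estimates.
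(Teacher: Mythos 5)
Your proposal follows essentially the same route as the paper's own proof: diagonalisation by the $\mfL$-Fourier transform, Laplace transform of the resulting scalar multi-term Caputo equations, inversion via the multivariate Mittag-Leffler function (the paper cites \cite[Theorem 2.3]{laplace-inverse}), and then the key uniform bound $|E_{(\beta-\beta_1,\ldots,\beta-\beta_m,\beta),\,b}(\cdot)|\leqslant C_{T,\vec\beta,\vec\sigma}/(1+\gamma_\zeta t^{\beta})$ on $0<t\leqslant T$, which the paper takes from \cite[Lemma 3.3]{karel} (see also \cite[Lemma 3.2]{multi-estimate}, \cite[Lemma 3]{otra-estimacion}), followed by exactly your case-by-case treatment of the Sobolev weights. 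The one ingredient you flag as the ``main obstacle'' is thus precisely the point the paper resolves by citation rather than by a new argument, so your plan is complete once that lemma is invoked.
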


We conclude the paper with Section \ref{ss-graded}. Here we show results analogous to  Section \ref{explicit} in the case of {\bf graded Lie groups} $G$. The setting of graded Lie groups includes the Euclidean setting $\mathbb{R}^n$, the Heisenberg group $\mathbb{H}^n$, and more generally any stratified group. In this case, the natural choice of operators is the Rockland ones, and the spectrum of such operators on the Hilbert space $L^2(G)$ is continuous. However, using the Fourier analysis on the group, the analysis there can be reduced to the previous analysis with discrete spectrum. Moreover, for heat type equations, we are able to provide $L^p-L^q$ estimates for the solution, which is a different feature that the current setting allows to obtain. Particularly, this is a consequence of some recent results on the $L^p-L^q$ boundedness of  Fourier multipliers on locally compact groups \cite{RR2020}. Briefly, the results are as follows.   
  
Below we consider $\mathcal{R}$ to be a positive Rockland operator of homogeneous degree $\nu$, and for $s\in\mathbb{R}$ the notation $\dot{L}^{2}_{s}(G)$ (resp. $L^{2}_{s}(G)$) stands for the homogeneous (resp. nonhomogeneous) Sobolev spaces on $G$ as introduced in subsection  \ref{sub.int.graded}. We study the following {\bf $\mathcal{R}$-wave type equation}:  
\begin{equation}\label{r-wave-intro}
\left\{ \begin{aligned}
\prescript{C}{}\partial_{t}^{\beta}w(t,x)+\mathcal{R}w(t,x)&=0,\quad t>0,\quad x \in G,\quad 1<\beta<2,  \\
w(t,x)|_{_{_{t=0}}}&=w_0(x)\,, \\
\partial_t w(t,x)|_{_{_{t=0}}}&=w_1(x)\,.
\end{aligned}
\right.
\end{equation}
  We refer to Subsection \ref{sub.int.graded} for relevant details of the Fourier analysis on graded Lie groups.
\begin{thm} Let $\mathcal{R}$ be a positive Rockland operator of homogeneous degree $\nu$ on the graded Lie group $G$. On $G$ we consider the Cauchy problem \eqref{r-wave-intro}.   
    We have:
   
    \begin{enumerate}[label=(\alph*)]
        \item for any $t>0$ the solution to the problem \eqref{r-wave-intro} is explicitly given by 
        \[
w(t,x)=E_\beta(-t^{\beta}\mathcal{R})w_0(x)+tE_{\beta,2}(-t^{\beta}\mathcal{R})w_1(x)\,,\quad x \in G\,;
        \]
\item  if $(w_0,w_1) \in L^2(G)\times L^2(G)$, then the solution $w$ is unique and satisfies the estimate
\[
    \|w(t,\cdot)\|_{L^2(G)} \lesssim \|w_0\|_{L^2(G)}+t\|w_1\|_{L^2(G)}\,,\quad \text{for all} \quad t>0;
\]
\item if $(w_0,w_1) \in L^{2}_{s}(G)\times L^{2}_{s}(G)$, then the solution $w$ is unique and satisfies the Sobolev-norm estimate
\[
\|w(t,\cdot)\|_{L^{2}_{s}(G)}\lesssim \|w_0\|_{L^{2}_{s}(G)}+t\|w_1\|_{L^{2}_{s}(G)}\,,\quad{\text{for any}}\quad s\in\mathbb{R},
\]
which, in particular, yields the estimate in \ref{itm:2est} for $s=0$;
\item if $(w_0,w_1) \in L^{2}_{s}(G)\times L^{2}_{s-\nu/\beta}(G)$, then the solution $w$ is unique and satisfies the Sobolev-norm estimate
\[
    \|w(t,\cdot)\|_{L^{2}_{s}(G)}\lesssim \|w_0\|_{L^{2}_{s}(G)}+ (1+t)\|w_1\|_{L^{2}_{s-\nu/\beta}(G)},\quad s\geqslant \frac{\nu}{\beta}.
\]
\item Finally, we also obtain that 
\begin{align*}
     \|\partial_t w(t,\cdot)\|_{L^2(G)}&\lesssim 
\left\{
\begin{array}{rccl}
& t^{\beta-1}\|w_0\|_{\dot{L}_{\nu}^2(G)}+\|w_1\|_{L^2(G)},\,\quad (w_0,w_1)\in \dot{L}_{\nu}^2(G)\times L^2(G), \\
& \|w_0\|_{\dot{L}_{\nu/\beta}^2(G)}+\|w_1\|_{L^2(G)},\,\quad (w_0,w_1)\in \dot{L}_{\nu/\beta}^2(G)\times L^2(G), \\
& t^{-1}\|w_0\|_{L^2(G)}+\|w_1\|_{L^2(G)},\,\quad (w_0,w_1)\in L^2(G)\times L^2(G).
\end{array}
\right.     
 \end{align*}

    \end{enumerate}
\end{thm}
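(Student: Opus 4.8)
The plan is to reduce the Cauchy problem on $G$ to the abstract Hilbert-space situation already settled for the $\mfL$-wave type equation \eqref{wave-FDE-intro}, using the group Fourier transform, and then to read off the estimates from Plancherel's identity together with uniform bounds on Mittag-Leffler functions. First I would apply the group Fourier transform $\widehat{\cdot}$ to \eqref{r-wave-intro}. Since $\mathcal{R}$ is a positive Rockland operator, for Plancherel-a.e.\ $\pi\in\widehat{G}$ the infinitesimal operator $\pi(\mathcal{R})$ is positive, (essentially) self-adjoint and densely defined on $\mathcal{H}_\pi$ with discrete spectrum and an orthonormal eigenbasis; this is exactly the configuration of our hypotheses on $\mfL$ with $\mathcal{H}=\mathcal{H}_\pi$ and $\mfL=\pi(\mathcal{R})$. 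Fixing $\pi$, each column of $\widehat{w}(t,\pi)$ solves a fractional problem of the type \eqref{wave-FDE-intro}, so the established $\mfL$-wave result produces the fiberwise solution \eqref{solution-wave-intro} in terms of $E_\beta(-t^\beta\pi(\mathcal{R}))$ and $E_{\beta,2}(-t^\beta\pi(\mathcal{R}))$ via the spectral calculus of $\pi(\mathcal{R})$. Reassembling the fibers by the inverse Fourier transform and identifying $E_\beta(-t^\beta\mathcal{R})$ and $E_{\beta,2}(-t^\beta\mathcal{R})$ with the corresponding functions of $\mathcal{R}$ through its functional calculus on $L^2(G)$ yields the explicit formula in (a); uniqueness is inherited fiberwise and then globally by Plancherel.

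For the norm estimates I would pass, via Plancherel, from the $L^2_s(G)$ and $\dot L^2_s(G)$ norms to weighted integrals over $\widehat{G}$ of Hilbert–Schmidt norms in which $\mathcal{R}$ acts as multiplication by the spectral parameter $\lambda\geq 0$. All bounds then stem from the scalar estimates $|E_\beta(-x)|\lesssim (1+x)^{-1}$ and $|E_{\beta,2}(-x)|\lesssim (1+x)^{-1}$, valid for $x\geq 0$ and $1<\beta<2$. For (b) and (c) I use $|E_\beta(-t^\beta\lambda)|\lesssim 1$ and $t\,|E_{\beta,2}(-t^\beta\lambda)|\lesssim t$; since the weight $(1+\lambda)^{s/\nu}$ commutes with these Fourier multipliers, the stated $L^2_s$ bounds follow, and $s=0$ gives (b). For (d), the gain $\nu/\beta$ on $w_1$ comes from controlling $(1+\lambda)^{1/\beta}\,t\,E_{\beta,2}(-t^\beta\lambda)$ by $1+t$: splitting into $\lambda\le 1$ (where the weight is harmless) and $\lambda\ge 1$, and substituting $u=t\lambda^{1/\beta}$, reduces the large-$\lambda$ part to $u/(1+u^\beta)$, which is bounded precisely because $\beta>1$; the restriction $s\geq \nu/\beta$ ensures $s/\nu-1/\beta\geq 0$, so that the homogeneous weights on $w_1$ are dominated by nonhomogeneous ones.

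For (e) I would differentiate the representation in (a) using $\frac{d}{dt}E_\beta(-\lambda t^\beta)=-\lambda t^{\beta-1}E_{\beta,\beta}(-\lambda t^\beta)$ and $\frac{d}{dt}\big[t E_{\beta,2}(-\lambda t^\beta)\big]=E_\beta(-\lambda t^\beta)$, giving $\widehat{\partial_t w}=-\lambda t^{\beta-1}E_{\beta,\beta}(-\lambda t^\beta)\widehat{w}_0+E_\beta(-\lambda t^\beta)\widehat{w}_1$. The $w_1$-term is controlled by $\|w_1\|_{L^2(G)}$ in every case through $|E_\beta|\lesssim 1$. Writing the $w_0$-multiplier as $\lambda^{1-\theta}t^{\beta-1}E_{\beta,\beta}(-\lambda t^\beta)$ and using $|E_{\beta,\beta}(-x)|\lesssim (1+x)^{-1}$, the three cases correspond to $\theta=0$, giving the factor $t^{\beta-1}$ paired with $\|w_0\|_{\dot L^2_\nu}$; to $\theta=1/\beta$, which after $u=t\lambda^{1/\beta}$ becomes the bounded function $u^{\beta-1}/(1+u^\beta)$ paired with $\|w_0\|_{\dot L^2_{\nu/\beta}}$; and to $\theta=1$, using $\lambda t^{\beta-1}/(1+\lambda t^\beta)\le t^{-1}$ paired with $\|w_0\|_{L^2}$.

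I expect the main obstacle to be the rigorous passage between the fiberwise (direct-integral) solutions and the global operators $E_\beta(-t^\beta\mathcal{R})$, $E_{\beta,2}(-t^\beta\mathcal{R})$: one must justify measurability and integrability of the operator fields $\pi\mapsto E_\beta(-t^\beta\pi(\mathcal{R}))\widehat{w}_0(\pi)$, continuity in $t$, and membership of the reassembled object in the asserted Sobolev space, i.e.\ the compatibility of the functional calculus of $\mathcal{R}$ on $L^2(G)$ with the fiberwise Mittag-Leffler calculus. Once this identification is secured, all the estimates reduce to the scalar Mittag-Leffler bounds above combined with Plancherel's identity.
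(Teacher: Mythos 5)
Your proposal is correct and follows essentially the same route as the paper: group Fourier transform to decouple into fibers where $\pi(\mathcal{R})$ has discrete spectrum, Mittag--Leffler representation of the fiberwise solutions (the paper redoes the Laplace-transform computation entrywise rather than quoting the abstract $\mfL$-wave theorem, but this is the same reduction the paper itself advertises), then Plancherel combined with the scalar bounds $|E_{\beta,\rho}(-x)|\lesssim (1+x)^{-1}$ and the same supremum arguments in $t$ (your substitution $u=t\lambda^{1/\beta}$ is exactly the paper's optimization of $h(t)=t/(1+\pi_i^2 t^{\beta})$ and $g(t)=t^{\beta-1}\pi_i^2/(1+\pi_i^2 t^{\beta})$). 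Your differentiation formulas for part (e) also coincide with the paper's term-by-term computation, so the two arguments match in all essentials.
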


Let us now discuss the following {\bf $F(\mathcal{R})$-heat type equation}:

\begin{equation}\label{r-heat-intro}
\begin{split}
^{C}\partial_{t}^{\alpha}w(t,x)+F(\mathcal{R})w(t,x)&=0, \quad t>0,\,\, x\in G, \\
w(t,x)|_{_{_{t=0}}}&=w_0(x),
\end{split}
\end{equation}
where $F:[0,\infty) \rightarrow [0,\infty)$ is an increasing function such that $\displaystyle\lim_{s\to+\infty}F(s)=+\infty$ and $0<\alpha\leqslant 1$. 

Before stating our result on $L^p$-$L^q$ estimates for solutions of \eqref{r-heat-intro} let us introduce some necessary notation: We have denoted by $E_{(0,s)}(\mathcal{R})$  the spectral projections of the Rockland operator $\mathcal{R}$ to the interval $(0,s)$, and by $\tau$  the canonical trace on the right group von Neumann algebra $VN_{R}(G)$, see \cite{RR2020} for details.

\begin{thm}\label{thm1.5}
Let $0<\alpha\leqslant 1$ and $1\leqslant p\leqslant 2\leqslant q<+\infty$. Then there exists a unique continuous solution to the $F(\mathcal{R})$-heat type equation \eqref{r-heat-intro} represented by 
\[
w(t,x)=E_\alpha(-t^{\alpha}F(\mathcal{R}))w_0(x),\quad t>0,\,\,x\in G\,.
\]
Moreover, if $w_0 \in L^p(G)$, and 
\begin{align}\label{need-intro}
\sup_{t>0}\sup_{s>0}[\tau\big(E_{(0,s)}(\mathcal{R})\big)]^{\frac{1}{p}-\frac{1}{q}}E_\alpha(-t^{\alpha}F(s))<+\infty, 
\end{align}
then there exits a unique solution $w\in\mathcal{C}\big([0,+\infty);L^q(G)\big).$ 

In particular, if for some $\gamma>0$ we have 
\[
\tau\big(E_{(0,s)}(\mathcal{R})\big)\lesssim s^{\gamma},\quad s\to+\infty\,,\quad \text{and}\quad F(s)=s,
\]
then the condition \eqref{need-intro} is satisfied for any $1<p\leqslant 2\leqslant q<+\infty$ such that $\frac{1}{\gamma}>\frac{1}{p}-\frac{1}{q}$, and we get the following time decay rate for the solution of equation \eqref{r-heat-intro}: 
\[
\|w(t,\cdot)\|_{L^q(G)}\leqslant C_{\alpha,\gamma,p,q}t^{-\alpha\gamma\left(\frac{1}{p}-\frac{1}{q}\right)}\|w_0\|_{L^p(G)},    
\]
where $C_{\alpha,\gamma,p,q}$ does not depend on $w_0$ and $t>0.$
\end{thm}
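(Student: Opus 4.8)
The plan is to recognise the solution operator as a spectral multiplier of the Rockland operator $\mathcal{R}$, and then to reduce the representation to the scalar Caputo problem and the norm estimates to the $L^p$--$L^q$ multiplier theorem of \cite{RR2020}. First I would establish the representation and uniqueness. Applying the group Fourier transform to \eqref{r-heat-intro} and diagonalising the positive self-adjoint operator $\pi(\mathcal{R})$ on each representation space, $F(\mathcal{R})$ becomes multiplication by $F$ on the spectrum, so the equation decouples into a family of scalar Caputo equations ${}^{C}\partial_t^{\alpha}u(t)+\lambda u(t)=0$, $u(0)=u_0$, with $\lambda\geqslant 0$. Each of these has the unique continuous solution $u(t)=E_\alpha(-\lambda t^\alpha)u_0$, and reassembling through the functional calculus of $\mathcal{R}$ gives $w(t,x)=E_\alpha(-t^\alpha F(\mathcal{R}))w_0(x)$. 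Since $0\leqslant E_\alpha(-x)\leqslant 1$ for $x\geqslant 0$ when $0<\alpha\leqslant 1$, the operator $E_\alpha(-t^\alpha F(\mathcal{R}))$ is bounded on $L^2(G)$; this gives $L^2$-well-posedness and, by injectivity of the Fourier transform together with uniqueness at the scalar level, uniqueness of the solution.

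The core of the statement is the $L^p$--$L^q$ bound. I would set $m_t(s):=E_\alpha(-t^\alpha F(s))$, so that $w(t,\cdot)=m_t(\mathcal{R})w_0$ is the spectral multiplier of $\mathcal{R}$ with symbol $m_t$. Since $F$ is increasing and $x\mapsto E_\alpha(-x)$ is positive and (completely monotone, hence) nonincreasing on $[0,\infty)$, the symbol $m_t$ is positive and nonincreasing in $s$, so its distribution function relative to the canonical trace $\tau$ is governed by $s\mapsto\tau(E_{(0,s)}(\mathcal{R}))$. The Paley-type $L^p$--$L^q$ multiplier theorem of \cite{RR2020}, valid for $1\leqslant p\leqslant 2\leqslant q<+\infty$ on the group von Neumann algebra $VN_R(G)$, then yields
\[
\|w(t,\cdot)\|_{L^q(G)}=\|m_t(\mathcal{R})w_0\|_{L^q(G)}\lesssim\Big(\sup_{s>0}[\tau(E_{(0,s)}(\mathcal{R}))]^{\frac1p-\frac1q}E_\alpha(-t^\alpha F(s))\Big)\|w_0\|_{L^p(G)}.
\]
Condition \eqref{need-intro} is precisely the statement that the right-hand factor is bounded uniformly in $t$, giving $\sup_{t>0}\|w(t,\cdot)\|_{L^q(G)}\lesssim\|w_0\|_{L^p(G)}$; the continuity $w\in\mathcal{C}([0,+\infty);L^q(G))$ follows by applying the same multiplier bound to the differences $m_t-m_{t'}$ and using the continuity of $t\mapsto m_t(s)$ together with dominated convergence.

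Finally, for the explicit decay rate I would specialise to $F(s)=s$ and $\tau(E_{(0,s)}(\mathcal{R}))\lesssim s^\gamma$, and invoke the Mittag-Leffler bound $E_\alpha(-x)\lesssim(1+x)^{-1}$ for $x\geqslant 0$, $0<\alpha\leqslant 1$. Writing $\theta:=\gamma(\tfrac1p-\tfrac1q)$ and substituting $u=t^\alpha s$,
\[
\sup_{s>0}s^{\theta}E_\alpha(-t^\alpha s)\lesssim\sup_{s>0}\frac{s^{\theta}}{1+t^\alpha s}=t^{-\alpha\theta}\sup_{u>0}\frac{u^{\theta}}{1+u},
\]
and the hypothesis $\tfrac1\gamma>\tfrac1p-\tfrac1q$, i.e.\ $0<\theta<1$, makes the last supremum a finite constant, since $u\mapsto u^{\theta}/(1+u)$ vanishes both at $0$ and at $\infty$. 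Inserting this into the multiplier bound of the previous paragraph produces the claimed decay $\|w(t,\cdot)\|_{L^q(G)}\lesssim t^{-\alpha\gamma(\frac1p-\frac1q)}\|w_0\|_{L^p(G)}$. I expect the main obstacle to be the precise verification that the Mittag-Leffler symbol $m_t$ meets the hypotheses of the theorem of \cite{RR2020} — monotonicity, measurability with respect to the Plancherel trace, and the reduction of the abstract distribution-function supremum to the concrete quantity in \eqref{need-intro} — and, relatedly, the use of the homogeneity of $\mathcal{R}$ to pin down the exponent $\gamma=Q/\nu$; once this dictionary between spectral multipliers and the trace $\tau$ is in place, only the elementary scaling computation above remains.
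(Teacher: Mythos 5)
Your proposal is correct and follows essentially the same route as the paper's proof: the group Fourier transform reduction to decoupled scalar Caputo problems (solved there via the Laplace transform) giving $w(t,x)=E_\alpha(-t^\alpha F(\mathcal{R}))w_0(x)$, then the $L^p$--$L^q$ multiplier theorem of \cite{RR2020} together with the identification of the Lorentz norm as $\sup_{s>0}\left[\tau\big(E_{(0,s)}(\mathcal{R})\big)\right]^{\frac1p-\frac1q}E_\alpha(-t^\alpha F(s))$ (justified, as you note, by monotonicity and decay of the Mittag-Leffler symbol), and finally the bound $E_\alpha(-x)\lesssim(1+x)^{-1}$ for the time-decay rate. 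The only cosmetic difference is the last step, where you obtain finiteness of the supremum by the scaling $u=t^\alpha s$, whereas the paper computes the maximiser of $g(s)=s^{\gamma/r}\big(1+t^{\alpha}s/\Gamma(1+\alpha)\big)^{-1}$ explicitly; both yield the same exponent $t^{-\alpha\gamma(\frac1p-\frac1q)}$.
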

Applications of the above theorem are given in Subsection \ref{heat-section}.

\medskip We conclude this work by studying the following {\bf multi-term heat-wave type equation}:
\begin{equation}\label{multi-heawave-intro}
	\left\{ \begin{aligned}
		\prescript{C}{}\partial_{t}^{\alpha_0}w(t,x)+c_1\prescript{C}{}\partial_{t}^{\alpha_1}w(t,x)+\cdots+c_m\prescript{C}{}\partial_{t}^{\alpha_m}w(t,x)+\mathcal{R}w(t,x)&=0,\,\,  \\
		w(t,x)|_{_{_{t=0}}}&=w_0(x), \\
  \partial_t w(t,x)|_{_{_{t=0}}}&=w_1(x)\,,
	\end{aligned}
	\right.
\end{equation}
for $0<t\leqslant T<+\infty$ and $x\in G$, where $c_i>0$ $(i=1,\ldots,m)$ and $0<\alpha_m<\alpha_{m-1}<\cdots<\alpha_1<\alpha_0<2.$

\begin{thm}
Let $0<\alpha_m<\alpha_{m-1}<\cdots<\alpha_1<\alpha_0<2$. Then there exists a unique continuous solution to the mutli-term wave equation in \eqref{multi-heawave-intro} given by
\begin{align*}
&w(t,x)=\sum_{k=0}^{m}t^{\alpha_0-\alpha_k}E_{(\alpha_0-\alpha_1,\ldots,\alpha_0-\alpha_m,\alpha_0),\alpha_0-\alpha_k+1}(-c_1 t^{\alpha_0-\alpha_1},\ldots,-c_m t^{\alpha_0-\alpha_m},-t^{\alpha_0}\mathcal{R}) w_0(x) \\
&+\sum_{k=0}^{m}t^{\alpha_0-\alpha_k+1}E_{(\alpha_0-\alpha_1,\ldots,\alpha_0-\alpha_m,\alpha_0),\alpha_0-\alpha_k+2}(-c_1 t^{\alpha_0-\alpha_1},\ldots,-c_m t^{\alpha_0-\alpha_m},-t^{\alpha_0}\mathcal{R})w_1(x),
\end{align*}
for any $0<t\leqslant T$ and $x\in G.$  

 Moreover we have the following Sobolev norm estimates:
\begin{enumerate}
    \item For any $s\in\mathbb{R}$ and $w_0,w_1\in L^{2}_{s}(G)$ we have 
     \[
     \|w(t,\cdot)\|_{L^{2}_{s}(G)}\leqslant C_{T,\vec{\alpha},\vec{\gamma}}\left(\sum_{k=0}^{m}\gamma_k t^{\alpha_0-\alpha_k}\right)\left(\|w_0\|_{L^{2}_{s}(G)}+t\|w_1\|_{L^{2}_{s}(G)}\right)\,,
     \]
     for $0<t\leqslant T$.
     \item For any $s\geqslant \frac{\nu}{\alpha_0}$ and for $(w_0,w_1)\in L^{2}_{s}(G)\times L^{2}_{s-\nu/\alpha_0}(G)$ we have
     \[
    \|w(t,\cdot)\|_{L^{2}_{s}(G)}\leqslant C_{T,\vec{\alpha},\vec{\gamma}}\left(\sum_{k=0}^{m}\gamma_k t^{\alpha_0-\alpha_k}\right)\left(\|w_0\|_{L^{2}_{s}(G)}+(1+t) \|w_1\|_{L^{2}_{s-\nu/\alpha_0}(G)}\right)\,,
     \]
      for $0<t\leqslant T$.
     \item For any $s \geqslant \nu$ and for $w_0,w_1 \in L^{2}_{s-\nu}(G)$ we have
     \[
       \|w(t,\cdot)\|_{L^{2}_{s}(G)}\leqslant C_{T,\vec{\alpha},\vec{\gamma}}\left(\sum_{k=0}^{m}\gamma_k t^{\alpha_0-\alpha_k}\right)(1+t^{-\alpha_0})\left(\|w_0\|_{L^{2}_{s-\nu}(G)}+t\|w_1\|_{L^{2}_{s-\nu}(G)}\right)\,,
     \]
      for $0<t\leqslant T$.
\end{enumerate}
Under the condition that $0<\alpha_m<\alpha_{m-1}<\cdots<\alpha_1<\alpha_0\leqslant1$, the analogous results in the case of the mutli-term heat equation in  \eqref{multi-heawave-intro} are as above if one considers $w_1$ to be identically zero.
\end{thm}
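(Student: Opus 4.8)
The plan is to reduce the Cauchy problem \eqref{multi-heawave-intro} to the abstract multi-term $\mfL$-wave-heat result already established on a separable Hilbert space, by passing to the group Fourier transform on $G$. First I would apply the Fourier transform $\widehat{\cdot}$ to \eqref{multi-heawave-intro}: since the Rockland operator satisfies $\widehat{\mathcal{R}w}(t,\pi)=\pi(\mathcal{R})\widehat{w}(t,\pi)$ for $\pi\in\widehat{G}$, and the Dzhrbashyan-Caputo derivatives act only in $t$, the problem decouples into a family, indexed by $\pi\in\widehat{G}$, of operator-valued multi-term fractional ODEs
\begin{equation*}
\prescript{C}{}\partial_t^{\alpha_0}\widehat{w}(t,\pi)+\sum_{k=1}^m c_k\,\prescript{C}{}\partial_t^{\alpha_k}\widehat{w}(t,\pi)+\pi(\mathcal{R})\widehat{w}(t,\pi)=0,
\end{equation*}
with $\widehat{w}(0,\pi)=\widehat{w_0}(\pi)$ and $\partial_t\widehat{w}(0,\pi)=\widehat{w_1}(\pi)$. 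For each $\pi$ the operator $\pi(\mathcal{R})$ is positive and self-adjoint with discrete spectrum on the representation space $\mathcal{H}_\pi$; diagonalising it in an orthonormal eigenbasis turns the above into scalar multi-term fractional ODEs whose only parameter is an eigenvalue $\lambda\geqslant0$ of $\pi(\mathcal{R})$. This is exactly the situation covered by the abstract multi-term theorem, with $\mfL$ replaced fibrewise by $\pi(\mathcal{R})$ and the eigenvalues $\gamma_\zeta$ replaced by the spectrum of $\pi(\mathcal{R})$.

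Next I would solve the scalar problem by the Laplace transform in $t$. Transforming the multi-term equation and solving the resulting algebraic equation for $\widehat{w}$ yields a quotient involving the powers $s^{\alpha_j}$ whose inverse Laplace transform is, by the identities for the multivariate Mittag-Leffler function recorded in the preliminaries (cf. \cite{new-mittag-add,17,laplace-inverse}), precisely the combination of $E_{(\alpha_0-\alpha_1,\ldots,\alpha_0-\alpha_m,\alpha_0),\,\alpha_0-\alpha_k+1}$ and $E_{(\ldots),\,\alpha_0-\alpha_k+2}$ terms appearing in the statement, evaluated at $(-c_1t^{\alpha_0-\alpha_1},\ldots,-c_mt^{\alpha_0-\alpha_m},-\lambda t^{\alpha_0})$. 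Reassembling the eigenbasis and applying the inverse group Fourier transform replaces $\lambda$ by $\mathcal{R}$ through the functional calculus, which gives the claimed explicit formula for $w(t,x)$. Uniqueness is inherited fibrewise from the uniqueness in the abstract theorem, and the continuity in $t$ follows from the continuity of the Mittag-Leffler functions together with dominated convergence under the Plancherel integral.

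For the norm estimates I would use the Plancherel theorem on $G$ together with the characterisation of the Sobolev spaces $L^{2}_{s}(G)$ through $\mathcal{R}$, namely $\|f\|_{L^{2}_{s}(G)}\simeq\|(\mathrm{I}+\mathcal{R})^{s/\nu}f\|_{L^{2}(G)}$. This converts each estimate into a bound on the Hilbert-Schmidt norm of the symbol $\widehat{w}(t,\pi)$ weighted by $(\mathrm{I}+\pi(\mathcal{R}))^{s/\nu}$, which by the spectral theorem reduces to uniform-in-$\lambda\in[0,\infty)$ bounds on the scalar multivariate Mittag-Leffler functions and their $\lambda$-weighted versions. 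These are exactly the bounds established in the abstract multi-term result; the only additional bookkeeping is the loss of derivatives, which is governed by the homogeneity: a factor of $\pi(\mathcal{R})$ costs a Sobolev shift of $\nu$ (case (3)) and a factor of $\pi(\mathcal{R})^{1/\alpha_0}$ costs a shift of $\nu/\alpha_0$ (case (2)), since $\mathcal{R}$ is homogeneous of degree $\nu$. Integrating these fibrewise bounds against the Plancherel measure yields the stated Sobolev-norm estimates, with the prefactor $\sum_{k=0}^m\gamma_k t^{\alpha_0-\alpha_k}$ and the constant $C_{T,\vec\alpha,\vec\gamma}$ coming directly from the abstract estimates on $(0,T]$.

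The main obstacle I expect is the uniform control, over the entire spectrum $\lambda\in[0,\infty)$ and over $t\in(0,T]$, of the multivariate Mittag-Leffler functions together with their $\lambda$- and $\lambda^{1/\alpha_0}$-weighted variants; this is where the trade-off between spectral decay and the Sobolev shifts $\nu$ and $\nu/\alpha_0$ must be made quantitative, and where the restriction to $t\leqslant T$ enters, keeping the finitely many arguments $-c_kt^{\alpha_0-\alpha_k}$ bounded. Once these scalar bounds are in hand from the abstract theorem, the passage to $G$ is routine. Finally, the heat case $\alpha_0\leqslant1$ follows by the same argument upon discarding the second initial condition: setting $w_1\equiv0$ removes the $E_{(\ldots),\,\alpha_0-\alpha_k+2}$ terms, and the remaining $w_0$-estimates are unchanged.
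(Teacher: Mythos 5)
Your proposal matches the paper's proof essentially step for step: the paper likewise applies the group Fourier transform, uses the discreteness of the spectrum of $\pi(\mathcal{R})$ (the diagonal matrix representation with entries $\pi_i^2$) to decouple into scalar multi-term fractional ODEs, solves these by the Laplace transform and the multivariate Mittag-Leffler inversion, and obtains the three Sobolev estimates via the Plancherel formula from the uniform bound $\bigl|E_{(\alpha_0-\alpha_1,\ldots,\alpha_0),\,\cdot}\bigr|\leqslant C_{T,\vec{\alpha},\vec{\gamma}}/(1+\pi_i^2t^{\alpha_0})$ on $(0,T]$, combined with the supremum of $t/(1+\pi_i^2t^{\alpha_0})$ for the shift $\nu/\alpha_0$ and the factor $\pi_i^{2}$ for the shift $\nu$, exactly as you describe. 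The only cosmetic difference is that you import the scalar bounds ``fibrewise'' from the abstract Hilbert-space theorem of Section 3, whereas the paper reruns the Laplace-transform computation directly on the matrix entries $\widehat{w}(t,\pi)_{i,j}$; both rest on the same estimates for the multivariate Mittag-Leffler function.
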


\section{Preliminary Notions}\label{preli}

We start by recalling some necessary definitions and basic results on integro-differential operators, abstract differential equations, Fourier analysis on a separable Hilbert space and graded Lie group, which will be used constantly in the development of this paper. 

Let us first recall some standard function spaces:
\begin{align*}
L^1(a,T)&=\left\{f:(a,T)\to\mathbb{R}\;:\;\big\|f\big\|_{L^1(a,T)}:=\int_a^T\big|f(t)\big|\,\mathrm{d}t<\infty\right\}; \\
AC[a,T]&=\left\{f:[a,T]\to\mathbb{R}\;:\;f\text{ is absolutely continuous on }[a,T]\right\}; \\
AC^n[a,T]&=\left\{f:[a,T]\to\mathbb{R}\;:\;f^{(n-1)}\text{ exists and is in }AC[a,T]\right\},\qquad n=1,2,\ldots,
\end{align*}
where $[a,T]\subseteq\mathbb{R}$ is a fixed finite interval. 

\subsection{Integro-differential operators}

The \textit{Riemann--Liouville fractional integral} of order $\beta>0$ is defined by \cite{kilbas} \cite[Sections 2.3 and 2.4]{samko}:
\[
\prescript{RL}{a}I^{\beta}f(t)=\frac1{\Gamma(\beta)}\int_a^t (t-s)^{\beta-1}f(s)\,\mathrm{d}s,\qquad f\in L^1(a,T),
\]
while the \textit{Riemann--Liouville fractional derivative} of order $\beta\geqslant0$ is given by:
\begin{align*}
\prescript{RL}{a}D^{\beta}f(t)&=D^{n}\prescript{RL}{a}I^{n-\beta}f(t) \\
&=\frac1{\Gamma(n-\beta)}\left(\frac{\rm{d}}{\rm{d}t}\right)^{n}\int_a^t (t-s)^{n-\beta-1}f(s)\,\mathrm{d}s,\,\, f\in AC^n[a,T].
\end{align*}
In this article, we use a non-local differential operator well-known as the \textit{Dzhrbashyan--Caputo fractional derivative}:
\begin{equation}\label{djerbashian}
\prescript{C}{a}D^{\beta}f(t)=\prescript{RL}{a}I^{n-\beta}f^{(n)}(t),\qquad f\in AC^n[a,T],\quad n:=\lfloor\beta\rfloor+1.
\end{equation}
The operator above can be rewritten in terms of the initial conditions of the function and by using the Riemann-Liouville fractional derivative in the following form:
\begin{equation}\label{caputo-alternative}
\prescript{C}{a}D^{\beta}f(t)=\prescript{RL}{a}D^{\beta}\left(f(t)-\sum_{k=0}^{n-1}\frac{f^{(k)}(a)}{k!}(t-a)^k\right),\qquad f\in AC^n[a,T],
\end{equation}
Notice that expression \eqref{caputo-alternative} holds for all functions $f\in AC^n[a,T]$, see e.g. \cite[Theorem 2.2]{samko}. Let us point out that even though  definitions \eqref{djerbashian} and \eqref{caputo-alternative} coincide on the function space 
 $AC^n[a,T]$, definition \eqref{caputo-alternative} is broader due to the fact that Riemann--Liouville derivative, and consequently the Dzhrbashyan--Caputo fractional derivative, can be defined on a larger function space. We refer to \cite{example} for a concrete example of the latter situation.

\subsection{Abstract fractional equations}\label{abstractsection}
Let us now recall some important notions related to the solution of an evolution integral equation. For an extensive study and basic results on this subject in a general form, we refer the reader to the monographs \cite{pruss,tricomi}. The books \cite{mainardi,samko} can be used as classical ones on the subject of fractional calculus. Also, for more specific information on abstract fractional equations, see \cite{thesis2001} and references therein.    

To begin with, let $\mathfrak{L}$ be a closed linear operator densely defined on a Banach space $X$. Suppose also that $\beta>0$ and $n=1+\lfloor \beta\rfloor$. We are interested in studying the following Cauchy problem involving the Dzhrbashyan-Caputo fractional derivative of order $\beta$: 
\begin{equation}\label{abstracte}
\prescript{C}{0}\partial^{\beta}_t w(t)=\mathfrak{L}w(t),\quad t>0;\quad w^{(j)}(0)=w_j,\quad j=0,1,\ldots,n-1. 
\end{equation}
\begin{defn}
Let $w$ be a function of the space $C(\mathbb{R}^{+},X)$. It is said that $w$ is a \textit{strong solution} of the equation \eqref{abstracte} if $w\in C(\mathbb{R}^{+},X)\cap C^{n-1}(\mathbb{R}^{+},X)$, 
\[
\prescript{RL}{0}I^{n-\beta}\left(w(t)-\sum_{j=0}^{n-1}\frac{w^{(j)}(0)}{j!}t^j\right)\in C^{n-1}(\mathbb{R}^{+},X)
\]
and equation \eqref{abstracte} is satisfied on $\mathbb{R}^+.$
\end{defn}

\begin{defn}
The Cauchy problem \eqref{abstracte} is said to be well-posed if for any $w_j\in\mathcal{D}(\mathfrak{L})$ $(j=0,\ldots,n-1)$ there exists a unique strong solution $w$ of equation \eqref{abstracte}, and $w_{j,m}\in\mathcal{D}(\mathfrak{L})$, $w_{j,m}\to0$ as $m\to+\infty$, imply $w\to0$ as $m\to+\infty$ in $X$, uniformly on compact time intervals.
\end{defn}

Let us now consider the following particular case of equation \eqref{abstracte}: 
\begin{equation}\label{abstractezero}
\prescript{C}{0}\partial^{\beta}_t w(t)=\mathfrak{L}w(t),\quad t>0;\quad w(0)=w_0\quad w^{(j)}(0)=0,\quad j=1,\ldots,n-1. 
\end{equation}
It can be shown that equation \eqref{abstracte} is well-posed if and only if the following integral equation 
\begin{equation}\label{abstracti}
w(t)=w_0+\prescript{RL}{0}{I}^{\beta}\big(\mathfrak{L}w\big)(t)    
\end{equation}
is well-posed in the sense of \cite[Def. 1.2]{pruss}. A solution operator of equation \eqref{abstractezero} will be defined in terms of the equivalent integral equation \eqref{abstracti}.  

\begin{defn}
A family $\{E_\beta(-t^{\beta}\mathfrak{L})\}_{t\geqslant0}\subset \mathcal{B}(X)$ is called a \textit{solution operator} of equation \eqref{abstractezero} if the following conditions hold:
\begin{enumerate}
    \item $E_\beta(-t^{\beta}\mathfrak{L})$ is strongly continuous for $t\geqslant0$ and $E_\beta(0)=I;$
    \item $E_\beta(-t^{\beta}\mathfrak{L})\mathcal{D}(\mathfrak{L})\subset\mathcal{D}(\mathfrak{L})$ and $\mathfrak{L}E_\beta(-t^{\beta}\mathfrak{L})w=E_\beta(-t^{\beta}\mathfrak{L})\mathfrak{L}w$ for any $w\in\mathcal{D}(\mathfrak{L})$, $t\geqslant0;$
    \item $E_\beta(-t^{\beta}\mathfrak{L})w$ is a solution of \eqref{abstracti} for any $w\in\mathcal{D}(\mathfrak{L})$, $t\geqslant0.$
\end{enumerate}
\end{defn}
Let us note that, by using the notions of \cite{pruss}, equation \eqref{abstractezero} is well-posed if and only if it has a solution operator. That is, if $E_{\beta}(-t^{\beta}\mathfrak{L})$ is a solution operator of equation \eqref{abstractezero} then the general equation \eqref{abstracte} is soluble (uniquely) and its solution is given by
\[
w(t)=\sum_{j=0}^{n-1}\prescript{RL}{0}I^{j}\big(E_\beta(-t^{\beta}\mathfrak{L})\big)(t)w_j,
\]
where $w_j\in\mathcal{D}(\mathfrak{L})$, $j=0,1,\ldots,n-1$, i.e. equation \eqref{abstracte}  is well-posed. Thus we can then study equation \eqref{abstractezero} instead of \eqref{abstracte}.

\subsection{Fourier analysis on a separable Hilbert space} \label{FA-hilbert}

We recall some preliminary results about the Fourier analysis which can be derived from a linear operator with discrete spectrum. This was recently developed in \cite{[15],fourier-1,[51]-1}. 

Let $\mfL:\text{Dom}(\mfL)\subset\mathcal{H}\to\mathcal{H}$ be a densely defined positive linear operator (with discrete spectrum $\{\lambda_\xi\}_{\xi\in I}$) in a separable Hilbert space $\mathcal{H}$. Recall that {\it the space of test functions for $\mfL$} is defined as  
\[
\mathcal{H}_{\mfL}^{\infty}:=\text{Dom}(\mfL^{\infty})=\bigcap_{k=1}^{+\infty}\text{Dom}(\mfL^k),
\]
where $\text{Dom}(\mfL^k)$ denotes the domain of the iterated operator $\mfL^k$, i.e. 
\[
\text{Dom}(\mfL^k)=\big\{g\in\mathcal{H}:\,\, \mfL^{i}g\in \text{Dom}(\mfL),\,\,i=0,1,\ldots,k-1\big\}.
\]
Therefore, the Fr\'echet topology on $\mathcal{H}_{\mfL}^{\infty}$ is given by the following family of semi-norms
\[
\|\phi\|_{\mathcal{H}_{\mfL}^{k}}=\max_{j\leqslant k}\|\mfL^{j}\phi\|_{\mathcal{H}},\quad k\in\mathbb{N}_0,\quad \phi\in \mathcal{H}_{\mfL}^{\infty}.
\]
Here we will always consider the system of eigenfunctions $\{e_\xi\}_{\xi\in I}$ of $\mfL$ to be an orthonormal basis in $\mathcal{H}.$ So, we have
\[
(e_\xi,e_\eta)_{\mh}=\delta_{\xi\eta}, \quad\text{where}\quad \text{$\delta_{\xi\eta}$ is the Kronecker delta,} 
\]
and $\|e_\xi\|_{\mathcal{H}}=1.$ Notice that $e_\xi\in \mh_{\mfL}^{\infty}$ for any $\xi\in I$. Thus the space $\mathcal{H}_{\mfL}^{\infty}$ is dense in $\mathcal{H}.$ 

Now we denote by $\mathcal{S}(I)$ the space of functions $\phi:I\to\mathbb{C}$ of rapid decay, i.e. such that for for any $N<+\infty$ there exists a constant $C_{\phi,N}$ such that 
\[
|\phi(\xi)|\leqslant C_{\phi,N}\langle \xi\rangle^{-N},\quad\text{for any $\xi\in I$,} 
\]
where $\langle \xi\rangle=(1+|\lambda_{\xi}|^2)^{\frac{1}{2N}}$ and $\lambda_\xi$ is the $\mfL$-eigenvalue associated with $e_\xi$ for each $\xi\in I$. The topology on $\mathcal{S}(I)$ is given by the following semi-norms:
\[
q_k(\phi)=\sup_{\xi\in I}\langle \xi\rangle^k |\phi(\xi)|, \quad k\in\mathbb{N}_0.
\]
The $\mfL$-Fourier transform is defined by  $\mathcal{F}_{\mfL}:\mathcal{H}^{\infty}_{\mfL}\to\mathcal{S}(I)$ such that 
\[
\big(\mathcal{F}_{\mfL}f\big)(\xi)=\widehat{f}(\xi)=(f,e_\xi).
\]
The Fourier transform is a bijective homeomorphism whose inverse is given by
\[
\mathcal{F}^{-1}_{\mfL}g=\sum_{\xi\in I}g(\xi)e_{\xi},\quad g\in \mathcal{S}(I).
\]
One can see that 
\[
f=\sum_{\xi\in I}\widehat{f}(\xi)e_\xi,\quad\text{for all}\quad f\in\mathcal{H}_{\mfL}^{\infty}. 
\]
Therefore, we get the Plancherel identity:
\[
\|f\|_{\mathcal{H}}^2=\sum_{\xi\in I}\big|\widehat{f}(\xi)\big|^2.
\]

Let us now recall a Sobolev space, which will be used frequently in the analysis of time-asymptotic behaviour of the solutions of the considered equations. So, for $\delta\in\mathbb{R}$, the \textit{Sobolev space $\mathcal{H}_{\mathfrak{L}}^{\delta}$} is defined by
\[
\mathcal{H}_{\mathfrak{L}}^{\delta}=\big\{f\in\mathcal{H}_{\mfL}^{\infty}:\,\, (I+\mathfrak{L})^{\delta/2}f\in \mathcal{H}\big\}
\]
endowed with the norm
\[
\|f\|_{\mathcal{H}_{\mathfrak{L}}^{\delta}}:=\|(I+\mathfrak{L})^{\delta/2}f\|_{\mathcal{H}}=\left(\sum_{\xi\in I}(1+\lambda_\xi)^{\delta}\big|\widehat{f}(\xi)\big|^2\right)^{1/2},
\]
where the last equality follows by the Plancherel identity. It is obvious that $\mathcal{H}_{\mathfrak{L}}^{\rho}\subset \mathcal{H}_{\mathfrak{L}}^{\delta}$ for any $\delta\leqslant\rho.$

Now we give several possibilities of settings where the above approach can be used. In general, self-adjoint operators with discrete spectrum are the most suitable ones. Regarding the the discrete spectra of self-adjoint operators we refer to \cite{book-self}. Hence Bessel or differential operators can be served as such examples. For example, problems related with the Sturm-Liouville, harmonic oscillator, anharmonic oscillator and Landau Hamiltonian meet our requests. We refer to \cite{fourier-1,RNT} for full details of the latter operators and problems. 

\subsection{Fourier analysis on graded Lie groups}\label{sub.int.graded}
In this subsection, we briefly recall some preliminary results on graded Lie groups and set up the related notation. More details can be found in \cite{FR16,FS82}.

A connected simply connected Lie group $G$ is called a \textit{graded Lie group} if its Lie algebra $\mathfrak{g}$ can be endowed with a vector space decomposition:
\[ 
 \mathfrak{g}= \bigoplus_{i=1}^{\infty}  \mathfrak{g}_{i}\,,
\]
such that all, but finitely many $\mathfrak{g}_{i}$'s, are  $\{0\}$ and $[\mathfrak{g}_{i},\mathfrak{g}_{j}]\subset \mathfrak{g}_{i+j}$.

A family of \textit{dilations} $\{D_r\}_{r>0}$  of a Lie algebra $\mathfrak{g}$ is a family of Lie algebra automorphisms $D_r : \mathfrak{g} \rightarrow \mathfrak{g}$ of the form $D_r=\textnormal{Exp}(A\, \textnormal{ln}r)$, where $A:\mathfrak{g} \rightarrow \mathfrak{g}$ is a diagonalisable linear operator with positive eigenvalues.

A Lie algebra that admits an automorphism of the form $D_r$ is nilpotent and as such it is an algebra of a connected simply connected nilpotent Lie group. Consequently, any dilation $D_r$ gives rise to a group automorphism $D_r: G \rightarrow G$ via the global diffeomorphism $\exp_{G}$ between $G$ and $\mathfrak{g}$.    

Let $(\pi,\mathcal{H}_\pi)$ be a unitary representation of $G$ on the separable Hilbert space $\mathcal{H}_{\pi}$. The vector $x \in \mathcal{H}_{\pi}$ is \textit{smooth} if the vector valued map $x \mapsto \pi(g)x$ from $G$ into $\mathcal{H}_{\pi}$ is smooth. In this case we write $x\in \mathcal{H}^{\infty}_{\pi}$. For a strongly continuous representation $\pi$, the limit 
\[
 d\pi(X)x:= \lim_{t \rightarrow 0} \frac{1}{t}\left(\pi(\exp_G(tX))x-x\right)\,,\quad x\in \mathcal{H}_{\pi}^{\infty}, X \in \mathfrak{g}\,,
 \]
 exists in the norm topology of $\mathcal{H}_\pi$, and the mapping $d\pi: \mathfrak{g}\rightarrow \textnormal{End}(\mathcal{H}_{\pi}^{\infty})$ is the \textit{infinitesimal representation} of $\mathfrak{g}$ on $\mathcal{H}_{\pi}^{\infty}$ associated to $\pi$. We can write $d\pi(X)=\pi(X)$.

 Recall that a consequence of the so-called \textit{Poincar\'{e}-Birkhoff-Witt} Theorem is that the universal enveloping algebra $\mathfrak{U}(\mathfrak{g})$ can be identified with the space of the left-invariant operators on $G$. The latter means in particular that if $T \in \mathfrak{U}(\mathfrak{g})$, then $T$ can be written as 
\[
T=\sum_{\alpha \in \mathcal{J}}c_{\alpha}X^{\alpha}\,,
\]
where $\mathcal{J} \subset \mathbb{N}^n$ is a finite set, and  $X^{\alpha}:=X_{1}^{\alpha_{1}}\cdots X_{n}^{\alpha_{n}}$, for $X_j \in \mathfrak{g}$ and $\alpha=(\alpha_{1},\cdots,\alpha_{n}) \in \mathbb{N}^n$. This identification allows extending the domain of the mapping $d\pi:=\pi$ to $\mathfrak{U}(\mathfrak{g})$. Additionally, we note that for $T \in \mathfrak{U}(\mathfrak{g})$, the field of operators $\{\pi(T) : \pi \in \widehat{G}\}$\footnote{By $\widehat{G}$ we denote the unitary dual of $G$; that is the set of all equivalence classes of irreducible, strongly
continuous and unitary representations of $G$.} is the \textit{symbol} in the sense of \cite[Def. 5.1.33]{FR16} associated to the operator $T$.

When $\mathfrak{g}$ is a graded Lie algebra, an important class of operators in  $\mathfrak{U}(\mathfrak{g})$ is that of \textit{Rockland operators}. Rockland operators, usually denoted by $\mathcal{R}$, are left-invariant differential operators, that are homogeneous (with respect to the dilations) of positive degree \cite[Def. 3.1.15]{FR16}, and satisfy the \textit{Rockland condition} \cite[Def. 4.1.1]{FR16}, hence they are hypoelliptic. \footnote{Let $T$ be a linear differential operator on a manifold $M$ with smooth coefficients. We say that $T$ is hypoelliptic, if for $u \in \mathcal{D}'(N)$ the condition $Tu \in C^{\infty}(N)$ implies that $u \in C^{\infty}(N)$ for any open $N \subset M$.}

Let us point out that the operator $\mathcal{R}$ on $\mathcal{D}(G)$ and $\pi(\mathcal{R})$ on $\mathcal{H}_{\pi}^{\infty}$ are densely defined on their domains. In the sequel, we shall keep the notation $\mathcal{R}$ and $\pi(\mathcal{R})$ for their self-adjoint extension on $L^2(G)$ and $\mathcal{H}_{\pi}^{\infty}$, respectively.

Regarding the representation of the operator $\pi(\mathcal{R})$ we have the following result: Let $\pi \in \widehat{G}\setminus \{1\}$, and let $\mathcal{R}$ be a positive Rockland operator on the graded group $G$.   In \cite{HJL85} the authors proved that the spectrum of the operator $\pi(\mathcal{R})$ is discrete and lies in $(0,\infty)$. The latter allows for an orthonormal basis for $\mathcal{H}_\pi$ which in turn gives rise to an infinite matrix representation of the form 
\begin{equation}\label{repr.pr}
\pi(\mathcal{R})=\begin{pmatrix}
\pi_{1}^{2} & 0 & \cdots & \cdots\\
0 & \pi_{2}^{2} & 0 & \cdots\\
\vdots & 0 & \ddots & \\
\vdots & \vdots & & \ddots
\end{pmatrix}\,.
\end{equation}

On our setting the \textit{group Fourier transform} is defined on $L^1(G,{\rm d}x)$ by 
\[
 \mathcal{F}_{G}f(\pi)\equiv \widehat{f}(\pi) \equiv \pi(f):= \int_{G}f(x)\pi(x)^{*}\,{\rm d}x\,,
\]
where $dx$ stands for the (bi-invariant) Haar measure on $G$. 

The Fourier transform satisfies the following property:  For $f \in \mathcal{S}(G) \cap L^1(G)$ and $X \in \mathfrak{g}$
\[
 \mathcal{F}_{G}(X f)(\pi)=\pi(X)\widehat{f}(\pi)\,.
\]
Consequently for the Rockland operator $\mathcal{R}$ on $G$ we get 
\[
 \mathcal{F}_{G}(\mathcal{R} f)(\pi)=\pi(\mathcal{R})\widehat{f}(\pi)\,,
\]
so that using \eqref{repr.pr} the operator has the following matrix representation 
\begin{equation}
\label{matrix.repr}
\mathcal{F}_{G}(\mathcal{R} f)(\pi)=\left\{ \pi_{i}^{2}\cdot \widehat{f}(\pi)_{i,j}\right\}_{i,j \in \mathbb{N}}\,.
\end{equation}
The orbit method, see \cite{CG90} and \cite{Kir04}, allows to describe $\widehat{G}$ as the subset of some Euclidean space. This allows to equip $\widehat{G}$ with a concrete measure, called in the literature the \textit{Plancherel measure} usually denoted by $\mu$. On the other hand for $f \in L^1(G) \cap L^2(G)$, and for $x \in G$, the operators $\pi(f)\pi(x)$, $\pi(x)\pi(f)$ and $\pi(f)$ are (under the same equivalent class $\pi$) trace class and Hilbert--Schmidt, respectively, and integrable against $\mu$. Under these considerations we have the isometry, known as the \textit{Plancherel formula}
\begin{equation}\label{plan.gr}
\int_{G} |f(x)|^2\,{\rm d}x=\int_{\widehat{G}}\textnormal{Tr}(\pi(f)\pi(f)^{*})\,{\rm d}\mu(\pi)=\int_{\widehat{{G}}}\|\pi(f)\|^{2}_{\textnormal{HS}(\mathcal{H}_{\pi})}\,{\rm d}\mu(\pi)\,,
\end{equation}
while any $f \in \mathcal{S}(G)$ may be recovered via the \textit{Fourier inversion formula} given by 
\begin{equation}
    \label{Four.inv.for}
f(x)=\int_{\widehat{G}}\textnormal{Tr}(\pi(x)\pi(f))\,{\rm d}\mu(\pi)=\int_{\widehat{G}}\textnormal{Tr}(\pi(f)\pi(x))\,{\rm d}\mu(\pi)\,.
\end{equation}
Let us point out that the orbit methods and its consequences hold true in the more general setting of a connected simply connected nilpotent Lie group.

\section{Heat and wave type equations on Hilbert spaces}\label{explicit}

In the first part of this section we apply the $\mfL$-Fourier transform (see subsection \ref{FA-hilbert}) to get the analytic solution of heat type equations by using a non-local integro-differential operator of Dzhrbashyan-Caputo type (in time). We complement our analysis with the study of the time-decay rate of the solutions. Here we show the asymptotic time estimates of solutions in the Sobolev space $\mathcal{H}_{\mfL}^{\delta}$ $(\delta\in\mathbb{R})$. In the next subsections we do a similar analysis for the wave type equation. Also, we consider multi-term heat and wave type equations.

\medskip As usual, in the sequel, we denote the spectrum of an operator $\mfL$ by $\sigma(\mfL).$

\medskip In this section we give the explicit solution and the time decay estimates for the $\mfL$-heat type equation, $\mfL$-wave type equation, as well as for their mutli-term variations.
\subsection{$\mfL$-heat type equation}

Let $\mfL:\text{Dom}(\mfL)\subset\mathcal{H}\to\mathcal{H}$ be a positive linear operator densely defined with discrete spectrum $\sigma(\mfL)=\{\gamma_\zeta:\zeta\in I\}$ ($I$ is a countable set of indices) on a separable Hilbert space $\mathcal{H}$, such that the system of eigenfunctions $\{e_\zeta\}_{\zeta \in I}$ of $\mfL$ is an orthonormal basis in $\mathcal{H}$. Also, we assume the operator to be closed if we work on a real Hilbert space. We consider the following equation
\begin{equation}\label{heat-FDE}
\prescript{C}{0}{\partial}^{\beta}w(t)+\mathfrak{L}w(t)=0,\qquad t>0,\quad 0<\beta\leqslant 1,
\end{equation}
under the initial condition
\[
w(t)|_{t=0}=w_0\in\mathcal{H}. 
\]

Before presenting our first result, let us introduce a useful notation to be involved in the explicit representation of the solution of our equations. For 
$\mathfrak{L}$ as above we define the Mittag-Leffler propagator 
\begin{equation}\label{Mittag-propagator}
E_{\nu,\rho}(-u\mathfrak{L}):=\sum_{k=0}^{+\infty}\frac{(-u\mathfrak{L})^k}{\Gamma(\nu k+\rho)},\quad u>0,\quad \nu,\rho\in\mathbb{R}.
\end{equation}
Notice that for any $z\in\mathbb{C}$, the function $E_{\nu,\rho}(z)$ coincides with the two parametric Mittag-Leffler function. For more details on different Mittag-leffler functions, see the book \cite{mittag}. 

Also, we will use frequently the following estimate from \cite[Theorem 1.6]{page 35}: 
\begin{equation}\label{uniform-estimate}
E_{\nu,\rho}(-s)\leqslant \frac{C}{1+s}\,,\quad s>0,\,\,\nu\in\mathbb{R},\,\, \rho<2,
\end{equation}
for some positive constant $C$. Moreover, by \cite[Theorem 4]{Mittag-bounded} we have the following
\begin{equation}
    \label{MLest2}
    E_{\nu,1}(-s):=E_{\nu}(-s)\leqslant \frac{1}{1+\Gamma(1+\nu)^{-1}s}\,,\quad s>0\,,\quad 0<\nu<1,
\end{equation}
with optimal constants.

\medskip The following is the first result about \eqref{heat-FDE}.
\begin{thm}\label{heat-thm}
Let $0<\beta\leqslant1$ and $\delta\in\mathbb{R}$. Let $\mathfrak{L}$ be a densely defined positive linear operator with discrete spectrum on a separable Hilbert space $\mathcal{H}$. 
\begin{enumerate}
    \item If $0\in\sigma(\mfL)$ and $w_0\in\mathcal{H}_{\mathfrak{L}}^{\delta}$ then there exists a unique continuous solution $w(t)\in \mathcal{H}^{\delta}_{\mfL}$ for any $t\in(0,+\infty)$ of equation \eqref{heat-FDE} represented by
\begin{equation}\label{solution-heat}
w(t)=\sum_{\zeta\in I}(w_0,e_\zeta)_{\mathcal{H}}E_{\beta}(-t^{\beta}\mathfrak{L})e_{\zeta}=\sum_{\zeta\in I}(w_0,e_\zeta)E_{\beta}(-\gamma_\zeta t^{\beta})e_{\zeta},
\end{equation}
such that 
\begin{equation}\label{heat-1}
\|w(t)\|_{\mathcal{H}^{\delta}_{\mathfrak{L}}}\lesssim \|w_0\|_{\mathcal{H}_{\mathfrak{L}}^{\delta}},\quad t\geqslant0.
\end{equation}
\item If $0\notin\sigma(\mfL)$ and $w_0\in\mathcal{H}_{\mathfrak{L}}^{\delta}$ then there exists a unique continuous solution $w(t)\in \mathcal{H}^{\delta}_{\mfL}$ for any $t\in(0,+\infty)$ of equation \eqref{heat-FDE} given by \eqref{solution-heat} 
such that 
\begin{equation}\label{heat-2}
\|w(t)\|_{\mathcal{H}^{\delta}_{\mathfrak{L}}}\lesssim (1+t^{\beta})^{-1}\|w_0\|_{\mathcal{H}_{\mathfrak{L}}^{\delta}},\quad t\geqslant0.
\end{equation}
\item If $w_0\in\mathcal{H}_{\mathfrak{L}}^{\delta-2}$ then there exists a unique continuous solution $w(t)\in \mathcal{H}^{\delta}_{\mfL}$ for any $t\in(0,+\infty)$ of equation \eqref{heat-FDE} given by \eqref{solution-heat} 
such that 
\begin{equation}\label{heat-3}
\|w(t)\|_{\mathcal{H}^{\delta}_{\mathfrak{L}}}\lesssim (1+t^{-\beta})\|w_0\|_{\mathcal{H}_{\mathfrak{L}}^{\delta-2}},\quad t>0.
\end{equation}
\end{enumerate}
\end{thm}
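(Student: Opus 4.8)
The plan is to diagonalise the problem through the $\mfL$-Fourier transform, thereby reducing the operator equation \eqref{heat-FDE} to a decoupled family of scalar fractional Cauchy problems indexed by $\zeta\in I$, to solve each of these explicitly, and then to reassemble and estimate the resulting series directly in the norm of $\mathcal{H}^{\delta}_{\mfL}$ via the Plancherel identity. Since the three parts share the same representation formula \eqref{solution-heat} and differ only in the choice of data space and the pointwise bound used for the Mittag-Leffler multiplier, the bulk of the work is a single computation carried out three times with different weights.

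First I would set $\widehat{w}(t)(\zeta)=(w(t),e_\zeta)_{\mh}$ and take the $\mfL$-Fourier transform of \eqref{heat-FDE}. Since $\mfL e_\zeta=\gamma_\zeta e_\zeta$, each Fourier coefficient must solve the scalar Caputo relaxation problem
\[
\prescript{C}{0}{\partial}^{\beta}\widehat{w}(t)(\zeta)+\gamma_\zeta\,\widehat{w}(t)(\zeta)=0,\qquad \widehat{w}(0)(\zeta)=(w_0,e_\zeta)_{\mh}.
\]
By the standard Laplace-transform computation for fractional relaxation (the Laplace symbol of the Caputo derivative being $p^{\beta}\tilde v(p)-p^{\beta-1}v(0)$, with resolvent $p^{\beta-1}/(p^{\beta}+\gamma_\zeta)$), the unique solution of each scalar problem is $\widehat{w}(t)(\zeta)=(w_0,e_\zeta)_{\mh}E_{\beta}(-\gamma_\zeta t^{\beta})$. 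Applying the inverse $\mfL$-Fourier transform then gives exactly \eqref{solution-heat}, and uniqueness is immediate because the Fourier transform is a bijection and each scalar problem has a unique solution.

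Next I would establish the three estimates from the Plancherel identity, writing
\[
\|w(t)\|_{\mathcal{H}^{\delta}_{\mfL}}^2=\sum_{\zeta\in I}(1+\gamma_\zeta)^{\delta}\,\big|E_{\beta}(-\gamma_\zeta t^{\beta})\big|^2\,\big|\widehat{w_0}(\zeta)\big|^2,
\]
and controlling the multiplier $(1+\gamma_\zeta)^{a}\,|E_{\beta}(-\gamma_\zeta t^{\beta})|$ uniformly in $\zeta$ for the appropriate exponent $a$. For part (1) the bound \eqref{uniform-estimate} gives $E_{\beta}(-\gamma_\zeta t^{\beta})\leqslant C$ uniformly, so the multiplier with $a=0$ is bounded and \eqref{heat-1} follows. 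For part (2) the hypothesis $0\notin\sigma(\mfL)$ forces $\gamma_\zeta$ to be bounded below by a positive constant, and then \eqref{uniform-estimate} yields $E_{\beta}(-\gamma_\zeta t^{\beta})\leqslant C/(1+\gamma_\zeta t^{\beta})\lesssim(1+t^{\beta})^{-1}$ uniformly in $\zeta$, giving \eqref{heat-2}. For part (3) I would use the elementary splitting
\[
(1+\gamma_\zeta)\,\frac{1}{1+\gamma_\zeta t^{\beta}}=\frac{1}{1+\gamma_\zeta t^{\beta}}+\frac{\gamma_\zeta}{1+\gamma_\zeta t^{\beta}}\leqslant 1+t^{-\beta},
\]
so that the multiplier with $a=1$ is bounded by $C(1+t^{-\beta})$; absorbing two factors of $(1+\gamma_\zeta)$ then converts the $\mathcal{H}^{\delta-2}_{\mfL}$-norm of $w_0$ into the $\mathcal{H}^{\delta}_{\mfL}$-norm of $w(t)$, producing \eqref{heat-3}.

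The step requiring the most care, and the main obstacle, is promoting this formal series to a genuine solution: I would need to justify that the $\mfL$-Fourier transform intertwines the Caputo derivative with its scalar analogue, i.e.\ that the fractional operator and $\mfL$ may be applied to \eqref{solution-heat} term by term, and to establish continuity of $t\mapsto w(t)$ into $\mathcal{H}^{\delta}_{\mfL}$ together with attainment of the initial datum. Convergence and continuity follow from the uniform-in-$\zeta$ multiplier bounds above by dominated convergence, using continuity of each $t\mapsto E_{\beta}(-\gamma_\zeta t^{\beta})$ and $E_{\beta}(0)=1$; the term-by-term validity of the operators is secured by showing that the partial sums converge in the graph norm of $\mfL$, uniformly on compact time intervals, which is precisely what the estimates of parts (1)--(3) deliver. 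In part (3) the singular factor $t^{-\beta}$ is the quantitative expression of the smoothing effect of the equation and is the reason that estimate is confined to $t>0$.
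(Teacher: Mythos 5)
Your proposal is correct and follows essentially the same route as the paper's proof: diagonalisation by the $\mfL$-Fourier transform, Laplace transform of the scalar Caputo relaxation problems yielding the Mittag-Leffler multipliers $E_{\beta}(-\gamma_\zeta t^{\beta})$, the uniform bound \eqref{uniform-estimate}, the lower bound on the spectrum when $0\notin\sigma(\mfL)$, and the splitting $\frac{1+\gamma_\zeta}{1+\gamma_\zeta t^{\beta}}\leqslant 1+t^{-\beta}$ for the smoothing estimate. The only cosmetic difference is that you carry the weight $(1+\gamma_\zeta)^{\delta}$ directly through Plancherel, whereas the paper reduces to $\delta=2$ by spectral calculus; these are equivalent.
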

\begin{proof}

We first consider the set of eigenvalues $\{\gamma_\zeta\}_{\zeta\in I}$ and eigenfunctions $\{e_\zeta\}_{\zeta\in I}$ for the operator $\mathfrak{L}.$ Since the set $\{e_\zeta\}_{\zeta\in I}$ is a basis in $\mathcal{H}$ (orthonormal), we can  write the function $w$ as follows 
\[
w(t)=\sum_{\zeta\in I}w_\zeta(t)e_{\zeta}.
\]
By using the above expression in equation \eqref{heat-FDE} and applying the $\mfL$-Fourier transform we obtain  
\[
\prescript{C}{0}{\partial}^{\beta}w_\zeta(t)+\gamma_\zeta w_\zeta(t)=0,\qquad t>0\quad \text{and each}\quad \zeta\in I.
\]
Now we apply the Laplace transform in the time-variable:
\[
s^{\beta}\widehat{w_\zeta}(s)-s^{\beta-1}w_\zeta(0)+\gamma_\zeta \widehat{w_\zeta}(s)=0,\qquad s>0.
\]
Then 
\[
\widehat{w_{\zeta}}(s)=\frac{s^{\beta-1}}{s^{\beta}+\gamma_\zeta}w_\zeta(0).
\]
By using the inverse Laplace transform (see e.g. \cite[Theorem 2.3]{laplace-inverse} or \cite[Theorem 2.1]{new-mittag-add}), it follows that 
\begin{equation}\label{heat-estimate}
w_{\zeta}(t)=E_{\beta}(-\gamma_\zeta t^{\beta})w_\zeta(0).
\end{equation}
So
\begin{align*}
w(t)&=\sum_{\zeta\in I}E_{\beta}(-\gamma_\zeta t^{\beta})w_\zeta(0)e_{\zeta}=\sum_{k=0}^{+\infty}\frac{(-t^{\beta})^k}{\Gamma(\beta k+1)}\sum_{\zeta\in I}w_\zeta(0)\gamma_\zeta^{k}e_{\zeta} \\
&=\sum_{k=0}^{+\infty}\frac{(-t^{\beta})^k}{\Gamma(\beta k+1)}\sum_{\zeta\in I}w_\zeta(0)\mathfrak{L}^k e_{\zeta}=\sum_{\zeta\in I}w_\zeta(0)\sum_{k=0}^{+\infty}\frac{(-t^{\beta}\mathfrak{L})^k}{\Gamma(\beta k+1)} e_{\zeta} \\
&=\sum_{\zeta\in I}w_\zeta(0)E_{\beta}(-t^{\beta}\mathfrak{L})e_{\zeta}.
\end{align*}
By estimate \eqref{uniform-estimate} we obtain
\begin{align}\label{aster}
|w_{\zeta}(t)|=|E_{\beta}(-\gamma_\zeta t^{\beta})||w_\zeta(0)|\leqslant C\frac{|w_\zeta(0)|}{1+\gamma_\zeta t^{\beta}}\leqslant C|w_\zeta(0)|.
\end{align}
Thus
\begin{align*}
\|w(t)\|_{\mathcal{H}}^2 =\sum_{\zeta\in I}|w_\zeta(t)|^2&\leqslant C\|w_0\|^2_{\mathcal{H}}<+\infty,\quad t\in[0,+\infty),
\end{align*}
therefore the above sum is uniformly and absolutely convergent. Moreover, the Mittag-Leffler function involved in the series is an entire function \cite{mittag}, which provides the continuity of the solution.   

\medskip By the spectral calculus it is enough to prove the estimates of this theorem for the case $\delta=2.$ We have the following two cases:
\begin{enumerate}
\item { $\bf0\in\sigma(\mfL)$.} By equality \eqref{heat-estimate}, estimates  \eqref{aster} and \eqref{uniform-estimate} one can see  
\begin{align*}
\|w(t)\|_{\mathcal{H}_\mfL^2}^2&=\|(I+\mathfrak{L})w(t)\|_{\mathcal{H}}^2=\sum_{\zeta\in I}(1+\gamma_\zeta)^2|w_\zeta(t)|^2 \\
&\leqslant C\sum_{\zeta\in I}\frac{(1+\gamma_\zeta)^2}{(1+\gamma_\zeta t^{\beta})^2}|w_\zeta(0)|^2 \lesssim \|w_0\|_{\mathcal{H}^{2}_{\mfL}}^2,\quad t\geqslant0,
\end{align*}
which gives \eqref{heat-1}. 
\item {$\bf0\notin\sigma(\mfL)$.} We know that
\begin{align*}
\|w(t)\|_{\mathcal{H}_\mfL^2}^2&\leqslant C\sum_{\zeta\in I}\frac{(1+\gamma_\zeta)^2}{(1+\gamma_\zeta t^{\beta})^2}|w_\zeta(0)|^2 \\
&\leqslant C\frac{1}{(1+\gamma_\zeta^* t^{\beta})^2}\sum_{\zeta\in I}(1+\gamma_\zeta)^2|w_\zeta(0)|^2\lesssim (1+t^{\beta})^{-2} \|w_0\|_{\mathcal{H}^{2}_{\mfL}}^2,\quad t\geqslant0,
\end{align*}
where $\gamma_\zeta^*>0$ is the smallest eigenvalue of the sequence $\{\gamma_\zeta\}_{\zeta\in I}$ and thus we obtain inequality \eqref{heat-2}. 
\end{enumerate}
On the other hand, we also have 
\begin{align*}
    \frac{1+\gamma_\zeta}{1+\gamma_\zeta t^{\beta}}=\frac{1}{1+\gamma_\zeta t^{\beta}}+\frac{\gamma_\zeta}{1+\gamma_\zeta t^{\beta}}\leqslant 1+t^{-\beta}, 
\end{align*}
and it implies
\eqref{heat-3}. 

\end{proof}

\begin{rem}
Notice that the analysis of the $\mfL$-heat type equation \eqref{heat-FDE} is restricted to positive operators to guarantee the existence of the propagator in the considered space. Moreover, this is a necessary condition for the existence of the solution. Nevertheless, the $\mfL$-Fourier method can be applied to any linear operator densely defined. Also, if we consider a positive self adjoint operator in Theorem \ref{heat-thm}, this provides immediately the required condition on the basis formed with the eigenfunctions of the operator.    
\end{rem}

\subsection{$\mfL$-wave type equation}
Let $\mathfrak{L}$ be as before. We consider the following equation
\begin{equation}\label{wave-FDE}
\prescript{C}{0}{\partial}^{\beta}w(t)+\mathfrak{L}w(t)=0,\qquad t>0,\quad 1<\beta<2,
\end{equation}
under the initial conditions
\[
\partial_t^{(k)}w(t)|_{t=0}=w_k\in\mathcal{H},\quad k=0,1\,.
\]
We now give the main result for \eqref{wave-FDE}.
\begin{thm}
Let $1<\beta<2$ and $\delta\in\mathbb{R}$. Let $\mathfrak{L}$ be a positive linear operator densely defined with discrete spectrum on a separable Hilbert space $\mathcal{H}$.
\begin{enumerate}
    \item If $0\in\sigma(\mfL)$ and $w_0,w_1\in\mathcal{H}_{\mathfrak{L}}^{\delta}$ then there exists a unique continuous solution $w(t)\in \mathcal{H}^{\delta}_{\mfL}$ for any $t\in(0,+\infty)$ of equation \eqref{wave-FDE} given by
\begin{equation}\label{solution-wave}
w(t)=\sum_{\zeta\in I}\Bigg[(w_0,e_\zeta)_{\mathcal{H}}E_{\beta}(-t^{\beta}\mathfrak{L})+t(w_1,e_\zeta)_{\mathcal{H}}E_{\beta,2}(-t^{\beta}\mathfrak{L})\Bigg]e_{\zeta},
\end{equation}
such that 
\begin{equation}\label{wave-1}
\|w(t)\|_{\mathcal{H}^{\delta}_{\mathfrak{L}}}\lesssim \|w_0\|_{\mathcal{H}_{\mathfrak{L}}^{\delta}}+t\|w_1\|_{\mathcal{H}_{\mathfrak{L}}^{\delta}},\quad t\geqslant0.
\end{equation}
\item If $0\notin\sigma(\mfL)$ and $w_0,w_1\in\mathcal{H}_{\mathfrak{L}}^{\delta}$ then there exists a unique continuous solution $w(t)\in \mathcal{H}^{\delta}_{\mfL}$ for any $t\in(0,+\infty)$ of equation \eqref{wave-FDE} represented by \eqref{solution-wave} such that 
\begin{equation}\label{wave-2}
\|w(t)\|_{\mathcal{H}^{\delta}_{\mathfrak{L}}}\lesssim (1+t^{\beta})^{-1}\|w_0\|_{\mathcal{H}_{\mathfrak{L}}^{\delta}}+t(1+t^{\beta})^{-1}\|w_1\|_{\mathcal{H}_{\mathfrak{L}}^{\delta}},\quad t\geqslant0.
\end{equation}
\item If $(w_0,w_1)\in\bigg(\mathcal{H}_{\mathfrak{L}}^{\delta},\mathcal{H}_{\mathfrak{L}}^{\delta-\frac{2}{\beta}}\bigg)$ then there exists a unique continuous solution $w(t)\in \mathcal{H}^{\delta+2}_{\mfL}$ for any $t\in(0,+\infty)$ of equation \eqref{wave-FDE} represented by \eqref{solution-wave} such that 
\begin{equation}\label{wave-3}
\|w(t)\|_{\mathcal{H}^{\delta}_{\mathfrak{L}}}\lesssim \|w_0\|_{\mathcal{H}_{\mathfrak{L}}^{\delta}}+(1+t)\|w_1\|_{\mathcal{H}_{\mathfrak{L}}^{\delta-\frac{2}{\beta}}},\quad t\geqslant0.
\end{equation}
\item If $(w_0,w_1)\in\big(\mathcal{H}_{\mathfrak{L}}^{\delta},\mathcal{H}_{\mathfrak{L}}^{\delta-2}\big)$ then there exists a unique continuous solution $w(t)\in \mathcal{H}^{\delta}_{\mfL}$ for any $t\in(0,+\infty)$ of equation \eqref{wave-FDE} represented by \eqref{solution-wave} such that 
\begin{equation}\label{wave-4}
\|w(t)\|_{\mathcal{H}^{\delta}_{\mathfrak{L}}}\lesssim \|w_0\|_{\mathcal{H}_{\mathfrak{L}}^{\delta}}+t(1+t^{-\beta})\|w_1\|_{\mathcal{H}_{\mathfrak{L}}^{\delta-2}},\quad t>0.
\end{equation}
\end{enumerate}
\end{thm}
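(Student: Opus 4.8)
The plan is to mirror the proof of Theorem \ref{heat-thm}, the only genuinely new features being that for $1<\beta<2$ the Dzhrbashyan--Caputo derivative has order $n=\lfloor\beta\rfloor+1=2$, so two initial data enter, and that a second propagator $E_{\beta,2}$ appears. First I would expand $w(t)=\sum_{\zeta\in I}w_\zeta(t)e_\zeta$ in the orthonormal eigenbasis and apply the $\mfL$-Fourier transform to \eqref{wave-FDE}, which decouples it into the scalar problems
\[
\prescript{C}{0}{\partial}^{\beta}w_\zeta(t)+\gamma_\zeta w_\zeta(t)=0,\qquad w_\zeta(0)=(w_0,e_\zeta)_{\mathcal{H}},\quad w_\zeta'(0)=(w_1,e_\zeta)_{\mathcal{H}}.
\]
Applying the Laplace transform and using that for $1<\beta<2$ the Caputo derivative transforms as $s^\beta\widehat{w_\zeta}(s)-s^{\beta-1}w_\zeta(0)-s^{\beta-2}w_\zeta'(0)$, I solve algebraically to get $\widehat{w_\zeta}(s)=\frac{s^{\beta-1}}{s^\beta+\gamma_\zeta}w_\zeta(0)+\frac{s^{\beta-2}}{s^\beta+\gamma_\zeta}w_\zeta'(0)$. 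Inverting via the same references as in the heat case (now using both $\mathcal{L}^{-1}[s^{\beta-1}/(s^\beta+\gamma_\zeta)]=E_\beta(-\gamma_\zeta t^\beta)$ and $\mathcal{L}^{-1}[s^{\beta-2}/(s^\beta+\gamma_\zeta)]=tE_{\beta,2}(-\gamma_\zeta t^\beta)$) yields $w_\zeta(t)=w_\zeta(0)E_\beta(-\gamma_\zeta t^\beta)+w_\zeta'(0)\,tE_{\beta,2}(-\gamma_\zeta t^\beta)$, and reassembling gives \eqref{solution-wave}. Using $E_\beta(0)=1$ and the termwise expansion of $tE_{\beta,2}(-\gamma_\zeta t^\beta)$ (whose derivative at $t=0$ is $1$, while all $k\geqslant1$ contributions and $\frac{d}{dt}E_\beta(-\gamma_\zeta t^\beta)$ vanish at $t=0$ because $\beta>1$), a direct check confirms $w(0)=w_0$ and $w'(0)=w_1$. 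Entirety of the Mittag-Leffler functions together with the uniform bounds below gives continuity and absolute convergence of the series in $\mathcal{H}^\delta_\mfL$, and uniqueness follows as in Subsection \ref{abstractsection} since the $\mfL$-Fourier transform is an isomorphism and each scalar problem is uniquely solvable.

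For the estimates I would work directly with $\|w(t)\|_{\mathcal{H}^\delta_\mfL}^2=\sum_{\zeta\in I}(1+\gamma_\zeta)^\delta|w_\zeta(t)|^2$ and the pointwise bound $|w_\zeta(t)|\leqslant|w_\zeta(0)|\,|E_\beta(-\gamma_\zeta t^\beta)|+t|w_\zeta'(0)|\,|E_{\beta,2}(-\gamma_\zeta t^\beta)|$. The analytic inputs are $|E_\beta(-s)|\leqslant C(1+s)^{-1}$ from \eqref{uniform-estimate} and, crucially, the analogous bound $|E_{\beta,2}(-s)|\leqslant C(1+s)^{-1}$. Since \eqref{uniform-estimate} is stated only for $\rho<2$, for the borderline index $\rho=2$ I would instead invoke the sectorial asymptotics of $E_{\beta,2}$ valid for $1<\beta<2$, whose leading term for $z=-s\to-\infty$ is $s^{-1}/\Gamma(2-\beta)$ with $\Gamma(2-\beta)$ finite and positive, giving the same $O((1+s)^{-1})$ decay. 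Granting both bounds, case (1) follows at once from $|E_\beta|,|E_{\beta,2}|\leqslant C$; case (2) uses $1+\gamma_\zeta t^\beta\geqslant1+\gamma_\zeta^* t^\beta\gtrsim1+t^\beta$, where $\gamma_\zeta^*>0$ is the smallest eigenvalue, applied to both the $w_0$ and $w_1$ contributions; and case (4) reduces, for the $w_1$ part, to exactly the heat-type inequality $\frac{1+\gamma_\zeta}{1+\gamma_\zeta t^\beta}\leqslant1+t^{-\beta}$ already used to prove \eqref{heat-3}, while the $w_0$ part is handled by $|E_\beta|\leqslant C$.

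The step I expect to require the most care is case (3), where the $w_1$ contribution forces the inequality
\[
(1+\gamma_\zeta)^{2/\beta}\,t^2\,|E_{\beta,2}(-\gamma_\zeta t^\beta)|^2\lesssim(1+t)^2,
\]
that is, after the $E_{\beta,2}$ bound, $\frac{(1+\gamma_\zeta)^{1/\beta}\,t}{1+\gamma_\zeta t^\beta}\lesssim1+t$ uniformly in $\zeta$ and $t$. The clean way to obtain this is to use subadditivity of $x\mapsto x^{1/\beta}$ (valid since $1/\beta<1$), namely $(1+\gamma_\zeta)^{1/\beta}\leqslant1+\gamma_\zeta^{1/\beta}$, so that
\[
\frac{(1+\gamma_\zeta)^{1/\beta}t}{1+\gamma_\zeta t^\beta}\leqslant\frac{t}{1+\gamma_\zeta t^\beta}+\frac{\gamma_\zeta^{1/\beta}t}{1+\gamma_\zeta t^\beta}\leqslant t+\sup_{y\geqslant0}\frac{y^{1/\beta}}{1+y},
\]
where the substitution $y=\gamma_\zeta t^\beta$ was used in the second term. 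Since $1/\beta<1$, the supremum is a finite constant, so the whole quantity is $\lesssim1+t$, giving \eqref{wave-3}; the $w_0$ contribution in case (3) is again controlled by $|E_\beta|\leqslant C$. Taking square roots in each case and summing against the spectral weights completes the argument.
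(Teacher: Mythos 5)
Your proposal is correct and follows essentially the same route as the paper: $\mfL$-Fourier decomposition, Laplace transform and Mittag-Leffler inversion for the representation \eqref{solution-wave}, then the uniform bound $E_{\nu,\rho}(-s)\lesssim (1+s)^{-1}$ combined with the same spectral-weight manipulations in each of the four cases (your subadditivity step $(1+\gamma_\zeta)^{1/\beta}\leqslant 1+\gamma_\zeta^{1/\beta}$ with the substitution $y=\gamma_\zeta t^{\beta}$ for case (3) is just a repackaging of the paper's computation of $\sup_{t>0}t/(1+\gamma_\zeta t^{\beta})$, and your case (4) reduction to $(1+\gamma_\zeta)/(1+\gamma_\zeta t^{\beta})\leqslant 1+t^{-\beta}$ is exactly the paper's argument). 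If anything, you are more careful than the paper on one point: the paper applies \eqref{uniform-estimate} directly to $E_{\beta,2}$ although that estimate is stated only for $\rho<2$, whereas you patch the borderline case $\rho=2$ via the large-argument asymptotics of $E_{\beta,2}$ (the cited theorem in Podlubny's book in fact allows arbitrary real second parameter, so both treatments are sound).
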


\begin{proof}
By the application of the $\mfL$-Fourier transform to equation \eqref{wave-FDE} we get 
\[
\prescript{C}{0}{\partial}^{\beta}w_\zeta(t)+\gamma_\zeta w_\zeta(t)=0,\qquad t>0,\quad\text{for each}\quad \zeta\in I.
\]
By the Laplace transform one has
\[
s^{\beta}\widehat{w_\zeta}(s)-s^{\beta-1}w_\zeta(0)-s^{\beta-2}\partial_t^{(1)}w_\zeta(t)\big|_{_{_{_{t=0}}}}+\gamma_\zeta \widehat{w_\zeta}(s)=0,\qquad s>0,
\]
and 
\[
\widehat{w_{\zeta}}(s)=\frac{s^{\beta-1}}{s^{\beta}+\gamma_\zeta}w_\zeta(0)+\frac{s^{\beta-2}}{s^{\beta}+\gamma_\zeta}\partial_t^{(1)}w_\zeta(t)\big|_{_{_{t=0}}}.
\]
By the inverse Laplace transform (see e.g. \cite[Theorem 2.3]{laplace-inverse} or \cite[Theorem 2.1]{new-mittag-add}), we arrive at 
\begin{align*}
w_{\zeta}(t)&=E_{\beta}(-\gamma_\zeta t^{\beta})w_\zeta(0)+tE_{\beta,2}(-\gamma_\zeta t^{\beta})\partial_t^{(1)}w_\zeta(t)\big|_{_{_{_{t=0}}}} \\
&=(w_0,e_\zeta)E_{\beta}(-\gamma_\zeta t^{\beta})+t(w_1,e_\zeta)E_{\beta,2}(-\gamma_\zeta t^{\beta}).
\end{align*}
Hence
\begin{align*}
w(t)&=\sum_{\zeta\in I}w_\zeta(t)e_{\zeta}=\sum_{\zeta\in I}\left[(w_0,e_\zeta)E_{\beta}(-\gamma_\zeta t^{\beta})+t(w_1,e_\zeta)E_{\beta,2}(-\gamma_\zeta t^{\beta})\right]e_{\zeta} \\
&=\sum_{\zeta\in I}\left[(w_0,e_\zeta)\left(\sum_{k=0}^{+\infty}\frac{(-t^{\beta})^k}{\Gamma(\beta k+1)}\gamma_\zeta^k\right) +t(w_1,e_\zeta)\left(\sum_{k=0}^{+\infty}\frac{(-t^{\beta})^k}{\Gamma(\beta k+2)}\gamma_\zeta^k\right)\right]e_{\zeta},
\end{align*}
which gives the desired representation since $\gamma_\zeta^k e_\zeta=\mathfrak{L}^k e_\zeta.$ 

Notice now that by \eqref{uniform-estimate} we get
\begin{align}\label{wave-estimate}
|w_{\zeta}(t)| &\leqslant |E_{\beta}(-\gamma_\zeta t^{\beta})||(w_0,e_\zeta)|+t|E_{\beta,2}(-\gamma_\zeta t^{\beta})||(w_1,e_\zeta)| \nonumber \\
&\leqslant \frac{C}{1+\gamma_\zeta t^{\beta}}\big(|(w_0,e_\zeta)|+t|(w_1,e_\zeta)|\big).
\end{align}
Again, by the spectral calculus it is enough to prove the estimates of this theorem for the case $\delta=2.$ By \eqref{wave-estimate} we have 
\begin{align*}
\|w(t)\|_{\mathcal{H}_\mfL^2}^2&=\sum_{\zeta\in I}(1+\gamma_\zeta)^2|w_\zeta(t)|^2 \\
&\leqslant C\sum_{\zeta\in I}\frac{(1+\gamma_\zeta)^2}{(1+\gamma_\zeta t^{\beta})^2}\big(|(w_0,e_\zeta)|+t|(w_1,e_\zeta)|\big)^2 \\
&\leqslant C\left(\sum_{\zeta\in I}\frac{(1+\gamma_\zeta)^2}{(1+\gamma_\zeta t^{\beta})^2}|(w_0,e_\zeta)|^2+t^2\sum_{\zeta\in I}\frac{(1+\gamma_\zeta)^2}{(1+\gamma_\zeta t^{\beta})^2}|(w_1,e_\zeta)|^2\right) \\
&:=J_1(t,w_0)+J_2(t,w_1).
\end{align*}
The analysis of $J_1(t,w_0)$ coincides with the one in the proof of Theorem \ref{heat-thm} even if the range of $\beta$ is different. 

We begin with the case ${\bf 0\in\sigma(\mfL).}$ We know that $J_1(t,w_1)\lesssim \|w_0\|_{\mathcal{H}_{\mathfrak{L}}^{2}}^2$ and clearly $J_2(t,w_1)\leqslant t^2\|w_1\|_{\mathcal{H}_{\mfL}^2}^2$. So, we arrive at \eqref{wave-1}.

The inequality \eqref{wave-2} follows by the same analysis of \eqref{heat-2}.

\medskip We will next prove \eqref{wave-3} and \eqref{wave-4}. We rewrite $J_2$ as
\begin{align*}
J_2(t,w_1)=C\sum_{\zeta\in I}\left(\frac{t}{1+\gamma_\zeta t^{\beta}}+\frac{t\gamma_\zeta}{1+\gamma_\zeta t^{\beta}}\right)^2 |(w_1,e_\zeta)|^2.
\end{align*}
We estimate the first term by $t^2$. While in the second term we have two options. First
\begin{align*}
    \frac{t\gamma_\zeta}{1+\gamma_\zeta t^{\beta}}\leqslant t^{1-\beta}\Longrightarrow J_2(t,w_1)\lesssim (t+t^{1-\beta})^2\|w_1\|_{\mathcal{H}}^2,
\end{align*}
and second 
\begin{align*}
    \frac{t\gamma_\zeta}{1+\gamma_\zeta t^{\beta}}\leqslant \gamma_\zeta \sup_{t>0}\frac{t}{1+\gamma_\zeta t^{\beta}}=\gamma_\zeta\frac{t}{1+\gamma_\zeta t^{\beta}}\Bigg|_{_{_{t=1/(\gamma_\zeta (\beta-1))^{1/{\beta}}}}}\leqslant C_{\beta}\gamma_\zeta^{1-\frac{1}{\beta}},
\end{align*}
which implies that
\begin{align*}
J_2(t,w_1)&\leqslant C\left(t^2\sum_{\zeta\in I}|(w_1,e_\zeta)|^2+\sum_{\zeta\in I}\left(\frac{t\gamma_\zeta}{1+\gamma_\zeta t^{\beta}}\right)^2 |(w_1,e_\zeta)|^2\right) \\
&\lesssim t^2\|w_1\|_{\mathcal{H}}^2+\|w_1\|_{\mathcal{H}_{\mfL}^{\frac{2(\beta-1)}{\beta}}}^2 \lesssim (1+t^2)\|w_1\|_{\mathcal{H}_{\mfL}^{2-\frac{2}{\beta}}}^2,
\end{align*}
since $\mathcal{H}_{\mathfrak{L}}^{\rho}\subset \mathcal{H}_{\mathfrak{L}}^{\delta}$ for any $\delta\leqslant\rho.$ Therefore, we get
\begin{align*}
J_2(t,w_1)&\lesssim \left\{
\begin{array}{rccl}
&t^2(1+t^{-\beta})^2\|w_1\|_{\mathcal{H}}^2,  \\
&(1+t)^2\|w_1\|_{\mathcal{H}_{\mfL}^{2-\frac{2}{\beta}}}^2,
\end{array}
\right.
\end{align*}
along with $J_1(t,w_0)\lesssim (1+t^{-\beta})^2 \|w_0\|_{\mathcal{H}_{\mathfrak{L}}^{2}}^2,$ which proves inequalities \eqref{wave-3} and \eqref{wave-4}. The proof is complete.
\end{proof}

\subsection{Multi-term $\mfL$-heat type equations}\label{multi-section-heat}

Now we focus on the case of multi-term heat type equations. We study the following equation:  
\begin{equation}\label{multi-heat}
\left\{ \begin{split}
\prescript{C}{}\partial_{t}^{\beta}w(t)+\sigma_1\prescript{C}{}\partial_{t}^{\beta_1}w(t)+\cdots+\sigma_m\prescript{C}{}\partial_{t}^{\beta_m}w(t)+\mfL w(t)&=0,   \\
w(t)\big|_{t=0}\big.&=w_0\in\mathcal{H},
\end{split}
\right.
\end{equation}
where $0<t\leqslant T<+\infty$ and $\mathfrak{L}$ is a positive linear operator densely defined with discrete spectrum on a separable Hilbert space $\mathcal{H}$, $\sigma_i\geqslant0$ $(i=1,\ldots,m)$ and $1\geqslant\beta>\beta_1>\cdots>\beta_{m}>0$. 

The solution of equation \eqref{multi-heat} is related with the multivariate Mittag-Leffler function \cite{ML-defined,multi-ML}, see also \cite{new-mittag-add}. So, let us recall the definition of this important special function, which is absolutely and locally uniformly convergent for the given parameters.  
\begin{defn}\label{multivariate-def}
Let $\beta_{i},\lambda\in\mathbb{R}$ $(i=1,\ldots,m)$ with $\beta_{i}>0$. The multivariate Mittag-Leffler function is defined as (\cite{ML-defined})
\begin{equation}\label{multivariateML}
E_{(\beta_1,\ldots,\beta_m),\lambda}(z_1,\ldots,z_m)=\sum_{k_1=0}^{+\infty}\cdots\sum_{k_m=0}^{+\infty}\frac{(k_1+\cdots+k_m)!}{\Gamma(\beta_1 k_1+\cdots+\beta_m k_m+\lambda)}\frac{z_1^{k_1}}{k_{1}!}\cdots\frac{z_m^{k_m}}{k_{m}!},
\end{equation}
for any complex numbers $z_1,\ldots,z_m\in\mathbb{C}$.
\end{defn}
We now give the main result for \eqref{multi-heat}.
\begin{thm}\label{multi-thm}
Let $\mathfrak{L}$ be a positive linear operator densely defined with discrete spectrum on a separable Hilbert space $\mathcal{H}$. Let $\delta\in\mathbb{R}$ and $1\geqslant\beta>\beta_1>\cdots>\beta_{m}>0$. 
\begin{enumerate}
    \item If $0\in\sigma(\mfL)$ and $w_0\in \mathcal{H}^{\delta}_{\mfL}$ then there exists a unique continuous solution $w(t)\in \mathcal{H}^{\delta}_{\mathfrak{L}}$ for any $t\in(0,T]$ for the equation \eqref{multi-heat} given by
\begin{align}\label{solution-heat-multi}
w(t)&=\sum_{k=0}^{m}\sigma_k t^{\beta-\beta_k}\sum_{\zeta\in I}(w_0,e_\zeta)_{\mathcal{H}}E_{(\beta-\beta_1,\ldots,\beta-\beta_m,\beta),\beta-\beta_k+1}(-\sigma_1 t^{\beta-\beta_1},\ldots \\
&\hspace{9cm}\ldots,-\sigma_m t^{\beta-\beta_m},-t^{\beta}\mfL)e_{\zeta} \nonumber\\
&:=\sum_{k=0}^{m}\sigma_k t^{\beta-\beta_k}\sum_{\zeta\in I}(w_0,e_\zeta)_{\mathcal{H}}E_{(\beta-\beta_1,\ldots,\beta-\beta_m,\beta),\beta-\beta_k+1}(-\sigma_1 t^{\beta-\beta_1},\ldots \nonumber \\
&\hspace{9cm}\ldots,-\sigma_m t^{\beta-\beta_m},-\gamma_\zeta t^{\beta}), \nonumber
\end{align}
where $\sigma_0=1$ and we have 
\begin{align*}
\|w(t)\|_{\mathcal{H}_\mfL^{\delta}}\leqslant C_{T,\vec{\beta},\vec{\sigma}} \left(\sum_{k=0}^{m}\sigma_k t^{\beta-\beta_k}\right)\|w_0\|_{\mathcal{H}_\mfL^{\delta}}, \quad 0<t\leqslant T, 
\end{align*}
for some constant which depends on $T$, $\vec{\beta}=(\beta,\ldots,\beta_m)$ and $\vec{\sigma}=(\sigma_0,\ldots,\sigma_m).$

\item If $0\notin\sigma(\mfL)$ and $w_0\in \mathcal{H}^{\delta}_{\mfL}$ then there exists a unique continuous solution $w(t)\in \mathcal{H}^{\delta}_{\mathfrak{L}}$ for any $t\in(0,T]$ for the equation \eqref{multi-heat} given by \eqref{solution-heat-multi} and we have 
\begin{align*}
\|w(t)\|_{\mathcal{H}_\mfL^{\delta}}\leqslant C_{T,\vec{\beta},\vec{\sigma}} \left(\sum_{k=0}^{m}\sigma_k t^{\beta-\beta_k}\right)(1+t^{\beta})^{-1}\|w_0\|_{\mathcal{H}_\mfL^{\delta}}, \quad 0<t\leqslant T. 
\end{align*}

\item If $w_0\in \mathcal{H}^{\delta-2}_{\mfL}$ then there exists a unique continuous solution $w(t)\in \mathcal{H}^{\delta}_{\mathfrak{L}}$ for any $t\in(0,T]$ for the equation \eqref{multi-heat} given by \eqref{solution-heat-multi} and we have 
\begin{align*}
\|w(t)\|_{\mathcal{H}_\mfL^{\delta}}\leqslant C_{T,\vec{\beta},\vec{\sigma}}(1+t^{-\beta})\left(\sum_{k=0}^{m}\sigma_k t^{\beta-\beta_k}\right)\|w_0\|_{\mathcal{H}_\mfL^{\delta-2}},\quad 0<t\leqslant T.
\end{align*}
\end{enumerate}
\end{thm}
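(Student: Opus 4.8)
The plan is to follow the Fourier--Laplace scheme already used in the proof of Theorem \ref{heat-thm}, the only genuinely new ingredient being the appearance of the multivariate Mittag-Leffler function of Definition \ref{multivariate-def} at the inversion stage. First I would expand the unknown in the eigenbasis, $w(t)=\sum_{\zeta\in I}w_\zeta(t)e_\zeta$, and apply the $\mfL$-Fourier transform to \eqref{multi-heat}. Since $\mfL e_\zeta=\gamma_\zeta e_\zeta$, this decouples the problem into the family of scalar multi-term fractional equations
\[
\prescript{C}{}\partial_t^{\beta}w_\zeta(t)+\sum_{j=1}^{m}\sigma_j\,\prescript{C}{}\partial_t^{\beta_j}w_\zeta(t)+\gamma_\zeta w_\zeta(t)=0,\qquad w_\zeta(0)=(w_0,e_\zeta),\quad \zeta\in I.
\]
Applying the Laplace transform in $t$ and using the transform rule $\mathcal{L}\{\prescript{C}{}\partial_t^{\alpha}f\}(s)=s^{\alpha}\widehat{f}(s)-s^{\alpha-1}f(0)$ for the Dzhrbashyan--Caputo derivative \eqref{djerbashian} with $\alpha\in(0,1]$, I would solve the resulting algebraic equation to obtain, with $\sigma_0=1$ and $\beta_0=\beta$,
\[
\widehat{w_\zeta}(s)=\frac{s^{\beta-1}+\sum_{j=1}^{m}\sigma_j s^{\beta_j-1}}{s^{\beta}+\sum_{j=1}^{m}\sigma_j s^{\beta_j}+\gamma_\zeta}\,w_\zeta(0)=w_\zeta(0)\sum_{k=0}^{m}\sigma_k\,\frac{s^{\beta_k-1}}{s^{\beta}+\sum_{j=1}^{m}\sigma_j s^{\beta_j}+\gamma_\zeta}.
\]

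Next I would invert each summand using the inverse-Laplace identity that produces the multivariate Mittag-Leffler function, as recorded in \cite{new-mittag-add} (see also \cite{17,laplace-inverse}): the inverse Laplace transform of $\sigma_k\,s^{\beta_k-1}\big(s^{\beta}+\sum_{j}\sigma_j s^{\beta_j}+\gamma_\zeta\big)^{-1}$ equals $\sigma_k\,t^{\beta-\beta_k}E_{(\beta-\beta_1,\ldots,\beta-\beta_m,\beta),\beta-\beta_k+1}(-\sigma_1 t^{\beta-\beta_1},\ldots,-\sigma_m t^{\beta-\beta_m},-\gamma_\zeta t^{\beta})$. Summing over $k$, then over $\zeta$, and restoring $\gamma_\zeta^{\ell}e_\zeta=\mfL^{\ell}e_\zeta$ inside the series yields the stated representation \eqref{solution-heat-multi}. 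Uniqueness follows mode-by-mode from the injectivity of the Laplace transform, exactly as in Theorem \ref{heat-thm}; continuity in $t$ and the membership $w(t)\in\mathcal{H}^{\delta}_{\mfL}$ will follow once the uniform bound below is established, since each multivariate Mittag-Leffler function is entire in its arguments and the series then converges absolutely and uniformly on compact time intervals.

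The heart of the argument, and the main obstacle, is a uniform bound on the propagator generalizing \eqref{uniform-estimate}: I would show that for $t\in(0,T]$ and every $k\in\{0,1,\ldots,m\}$,
\[
\Big|E_{(\beta-\beta_1,\ldots,\beta-\beta_m,\beta),\beta-\beta_k+1}(-\sigma_1 t^{\beta-\beta_1},\ldots,-\sigma_m t^{\beta-\beta_m},-\gamma_\zeta t^{\beta})\Big|\leqslant\frac{C_{T,\vec{\beta},\vec{\sigma}}}{1+\gamma_\zeta t^{\beta}}.
\]
The restriction to the finite interval $(0,T]$ is exactly what forces the $T$-dependence of the constant: for $t\in(0,T]$ the lower-order arguments $-\sigma_j t^{\beta-\beta_j}$ remain confined to the bounded set $[-\sigma_j T^{\beta-\beta_j},0]$, so only the last argument $-\gamma_\zeta t^{\beta}$ can grow, and one expects the same $(1+\gamma_\zeta t^{\beta})^{-1}$ decay as in the single-term estimate \eqref{uniform-estimate}. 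I would establish this either by a Bromwich-contour estimate of the inverse Laplace transform that defines the propagator, or by freezing the bounded arguments and reducing to the scalar bound \eqref{uniform-estimate}. Granting this bound, the Fourier coefficient obeys
\[
|w_\zeta(t)|\leqslant C_{T,\vec{\beta},\vec{\sigma}}\,|(w_0,e_\zeta)|\Big(\sum_{k=0}^{m}\sigma_k t^{\beta-\beta_k}\Big)\frac{1}{1+\gamma_\zeta t^{\beta}},
\]
and the three cases then close exactly as in Theorem \ref{heat-thm} after reducing to $\delta=2$ by the spectral calculus: case $(1)$ uses $1+\gamma_\zeta t^{\beta}\geqslant1$; case $(2)$ replaces $\gamma_\zeta$ by the smallest eigenvalue $\gamma_\zeta^{*}>0$ to generate the factor $(1+t^{\beta})^{-1}$; and case $(3)$ uses $\tfrac{1+\gamma_\zeta}{1+\gamma_\zeta t^{\beta}}\leqslant 1+t^{-\beta}$, in each instance pulling out the common factor $\sum_{k=0}^{m}\sigma_k t^{\beta-\beta_k}$.
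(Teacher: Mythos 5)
Your proposal reproduces the paper's proof essentially step for step: $\mfL$-Fourier decoupling into scalar multi-term equations, the Laplace transform computation of $\widehat{w_\zeta}(s)$, inversion to the multivariate Mittag-Leffler representation, the uniform bound $|w_\zeta(t)|\leqslant C_{T,\vec{\beta},\vec{\sigma}}\,(1+\gamma_\zeta t^{\beta})^{-1}\sum_{k=0}^{m}\sigma_k t^{\beta-\beta_k}|w_\zeta(0)|$, and the three cases closing exactly as in Theorem \ref{heat-thm} after reducing to $\delta=2$. The only divergence is that the propagator bound you single out as the main obstacle is not proved in the paper either: it is quoted from the literature (\cite[Lemma 3.3]{karel}, see also \cite[Lemma 3.2]{multi-estimate} or \cite[Lemma 3]{otra-estimacion}), so your proposed Bromwich-contour argument can simply be replaced by that citation.
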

\begin{proof}
Notice first that it is sufficient to prove the result for $\delta=2$. By applying the $\mfL$-Fourier transform to equation \eqref{multi-heat} we obtain
\[
\prescript{C}{}\partial_{t}^{\beta}w_\zeta(t)+\sigma_1\prescript{C}{}\partial_{t}^{\beta_1}w_\zeta(t)+\cdots+\sigma_m\prescript{C}{}\partial_{t}^{\beta_m}w_\zeta(t)+\gamma_\zeta w_\zeta(t)=0,\quad t>0,\quad\text{for each}\quad \zeta\in I.
\]
By using the Laplace transform in the time-variable one gets
\[
\begin{split}
s^{\beta}\widehat{w_\zeta}(s)-s^{\beta-1}w_\zeta(0)+\sigma_1 s^{\beta_1}\widehat{w_\zeta}(s)-\sigma_1 s^{\beta_1-1}w_\zeta(0)+&\cdots \\
&\hspace{-7cm}\cdots+\sigma_m s^{\beta_m}\widehat{w_\zeta}(s)-\sigma_m s^{\beta_m-1}w_\zeta(0)+\gamma_\zeta\widehat{w_\zeta}(s)=0, \quad s>0.
\end{split}
\]
Thus
\[
\widehat{w_\zeta}(s)=\frac{s^{\beta-1}+\sigma_1 s^{\beta_1-1}+\cdots+\sigma_m s^{\beta_m-1}}{s^{\beta}+\sigma_1 s^{\beta_1}+\cdots+\sigma_m s^{\beta_m}+\gamma_\zeta} w_\zeta(0), \quad s>0. 
\]
By applying the inverse Laplace transform and \cite[Theorem 2.3]{laplace-inverse} we arrive at
\begin{align*}
w_\zeta(t)=\sum_{k=0}^{m}\sigma_k t^{\beta-\beta_k}&E_{(\beta-\beta_1,\ldots,\beta-\beta_m,\beta),\beta-\beta_k+1}(-\sigma_1 t^{\beta-\beta_1},\ldots \\
&\hspace{3cm}\ldots,-\sigma_m t^{\beta-\beta_m},-\gamma_\zeta t^{\beta})w_\zeta(0),
\end{align*}
where $\sigma_0=1$ and $\beta_0=\beta.$ Hence by \cite[Lemma 3.3]{karel} (see also \cite[Lemma 3.2]{multi-estimate} or \cite[Lemma 3]{otra-estimacion}) we get
\begin{align}
|w_\zeta(t)|&\leqslant \sum_{k=0}^{m}\sigma_k t^{\beta-\beta_k}|E_{(\beta-\beta_1,\ldots,\beta-\beta_m,\beta),\beta-\beta_k+1}(-\sigma_1 t^{\beta-\beta_1},\ldots \nonumber\\
&\hspace{5cm}\ldots,-\sigma_m t^{\beta-\beta_m},-\gamma_\zeta t^{\beta})||w_\zeta(0)| \nonumber\\
&\leqslant C_{T,\sigma_1,\ldots,\sigma_m,\beta,\ldots,\beta_m}\frac{|w_\zeta(0)|}{1+\gamma_\zeta t^{\beta}}\sum_{k=0}^{m}\sigma_k t^{\beta-\beta_k},\quad 0<t\leqslant T.\label{multi-estimate}
\end{align}
Thus
\begin{align*}
\|w(t)\|_{\mathcal{H}_\mfL^2}^2&=\sum_{\zeta\in I}(1+\gamma_\zeta)^2|w_\zeta(t)|^2 \\
&\leqslant C_{T,\vec{\beta},\vec{\sigma}}^2 \left(\sum_{k=0}^{m}\sigma_k t^{\beta-\beta_k}\right)^2\sum_{\zeta\in I}\frac{(1+\gamma_\zeta)^2}{(1+\gamma_\zeta t^{\beta})^2}|w_\zeta(0)|^2 \\
&\leqslant 
\left\{
\begin{array}{rccl}
&\displaystyle C_{T,\vec{\beta},\vec{\sigma}}^2 \left(\sum_{k=0}^{m}\sigma_k t^{\beta-\beta_k}\right)^2\|w_0\|_{\mathcal{H}_\mfL^2}^2, &0<t\leqslant T,\quad 0\in\sigma(\mfL), \\
&\displaystyle C_{T,\vec{\beta},\vec{\sigma}}^2 \left(\sum_{k=0}^{m}\sigma_k t^{\beta-\beta_k}\right)^2 (1+t^{\beta})^{-2}\|w_0\|_{\mathcal{H}_\mfL^2}^2, &0<t\leqslant T,\quad 0\notin\sigma(\mfL), \\
&\displaystyle C_{T,\vec{\beta},\vec{\sigma}}^2 (1+t^{-\beta})^2\left(\sum_{k=0}^{m}\sigma_k t^{\beta-\beta_k}\right)^2\|w_0\|_{\mathcal{H}}^2, &0<t\leqslant T.
\end{array}
\right.
\end{align*}
The proof is complete.
\end{proof}

\subsection{Multi-term $\mfL$-wave type equations}\label{multi-section-wave}

We finish this section by studying the case of multi-term wave type equations. We consider the following equation:  
\begin{equation}\label{multi-wave}
\left\{ \begin{split}
\prescript{C}{}\partial_{t}^{\beta}w(t)+\sigma_1\prescript{C}{}\partial_{t}^{\beta_1}w(t)+\cdots+\sigma_m\prescript{C}{}\partial_{t}^{\beta_m}w(t)+\mfL w(t)&=0,  \\
\partial_t^{(k)}w(t)\big|_{t=0}\big.&=w_k\in\mathcal{H},\,\, k=0,1,
\end{split}
\right.
\end{equation}
where $0<t\leqslant T<+\infty$ and $\mathfrak{L}$ is a positive linear operator densely defined with discrete spectrum on a separable Hilbert space $\mathcal{H}$, $\sigma_i\geqslant0$ $(i=1,\ldots,m)$, $2>\beta>1$, and $\beta>\beta_1>\cdots>\beta_{m}>0$. 

\medskip The following is the main result for \eqref{multi-wave}.
\begin{thm}\label{multi-thm-wave}
Let $\mathfrak{L}$ be a positive linear operator densely defined with discrete spectrum on a separable Hilbert space $\mathcal{H}$. Let $\delta\in\mathbb{R}$, $2>\beta>1$ and $\beta>\beta_1>\cdots>\beta_{m}>0$. 
\begin{enumerate}
    \item If $0\in\sigma(\mfL)$ and $w_0,w_1\in \mathcal{H}^{\delta}_{\mfL}$ then there exists a unique continuous solution $w(t)\in \mathcal{H}^{\delta}_{\mathfrak{L}}$ for any $t\in(0,T]$ for the equation \eqref{multi-wave} given by
\begin{align}\label{solution-wave-multi}
w(t)&=\sum_{k=0}^{m}\sigma_k t^{\beta-\beta_k}\sum_{\zeta\in I}(w_0,e_\zeta)_{\mathcal{H}}E_{(\beta-\beta_1,\ldots,\beta-\beta_m,\beta),\beta-\beta_k+1}(-\sigma_1 t^{\beta-\beta_1},\ldots \\
&\hspace{9cm}\ldots,-\sigma_m t^{\beta-\beta_m},-\gamma_\zeta t^{\beta}), \nonumber \\
&+\sum_{k=0}^{m}\sigma_k t^{\beta-\beta_k+1}\sum_{\zeta\in I}(w_1,e_\zeta)_{\mathcal{H}}E_{(\beta-\beta_1,\ldots,\beta-\beta_m,\beta),\beta-\beta_k+2}(-\sigma_1 t^{\beta-\beta_1},\ldots \nonumber \\
&\hspace{9cm}\ldots,-\sigma_m t^{\beta-\beta_m},-\gamma_\zeta t^{\beta}), \nonumber
\end{align}
where $\sigma_0=1, \beta_0=\beta$ and we have 
\begin{align*}
\|w(t)\|_{\mathcal{H}_\mfL^{\delta}}\leqslant C_{T,\vec{\beta},\vec{\sigma}} \left(\sum_{k=0}^{m}\sigma_k t^{\beta-\beta_k}\right)\big(\|w_0\|_{\mathcal{H}_\mfL^{\delta}}+t\|w_1\|_{\mathcal{H}_\mfL^{\delta}}\big), \quad 0<t\leqslant T, 
\end{align*}
for some constant which depends on $T$, $\vec{\beta}=(\beta,\ldots,\beta_m)$ and $\vec{\sigma}=(\sigma_0,\ldots,\sigma_m).$

\item If $0\notin\sigma(\mfL)$ and $w_0,w_1\in \mathcal{H}^{\delta}_{\mfL}$ then there exists a unique continuous solution $w(t)\in \mathcal{H}^{\delta}_{\mathfrak{L}}$ for any $t\in(0,T]$ for the equation \eqref{multi-wave} given by \eqref{solution-wave-multi} and we have 
\begin{align*}
\|w(t)\|_{\mathcal{H}_\mfL^{\delta}}\leqslant C_{T,\vec{\beta},\vec{\sigma}} \left(\sum_{k=0}^{m}\sigma_k t^{\beta-\beta_k}\right)(1+t^{\beta})^{-1}\big(\|w_0\|_{\mathcal{H}_\mfL^{\delta}}+t\|w_1\|_{\mathcal{H}_\mfL^{\delta}}\big), \quad 0<t\leqslant T. 
\end{align*}
\item If $w_0\in \mathcal{H}^{\delta}_{\mfL}$ and $w_1\in \mathcal{H}^{\delta-\frac{2}{\beta}}_{\mfL}$ then there exists a unique continuous solution $w(t)\in \mathcal{H}^{\delta}_{\mathfrak{L}}$ for any $t\in(0,T]$ for the equation \eqref{multi-wave} given by \eqref{solution-wave-multi} and we have 
\begin{align*}
\|w(t)\|_{\mathcal{H}_\mfL^{\delta}}\leqslant C_{T,\vec{\beta},\vec{\sigma}}\left(\sum_{k=0}^{m}\sigma_k t^{\beta-\beta_k}\right)\bigg(\|w_0\|_{\mathcal{H}_\mfL^{\delta}}+(1+t)\|w_1\|_{\mathcal{H}_{\mfL}^{\delta-\frac{2}{\beta}}}\bigg),\quad 0<t\leqslant T.
\end{align*}
\item If $w_0,w_1\in \mathcal{H}^{\delta-2}_{\mfL}$ then there exists a unique continuous solution $w(t)\in \mathcal{H}^{\delta}_{\mathfrak{L}}$ for any $t\in(0,T]$ for the equation \eqref{multi-wave} given by \eqref{solution-wave-multi} and we have 
\begin{align*}
\|w(t)\|_{\mathcal{H}_\mfL^{\delta}}\leqslant C_{T,\vec{\beta},\vec{\sigma}}(1+t^{-\beta})\left(\sum_{k=0}^{m}\sigma_k t^{\beta-\beta_k}\right)\big(\|w_0\|_{\mathcal{H}_\mfL^{\delta-2}}+t\|w_1\|_{\mathcal{H}_\mfL^{\delta-2}}\big),\quad 0<t\leqslant T.
\end{align*}
\end{enumerate}
\end{thm}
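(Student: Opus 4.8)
The plan is to follow the proof of the single-term $\mfL$-wave type equation almost verbatim, the only genuinely new ingredient being the scalar multi-term Laplace inversion together with a uniform bound for the multivariate Mittag--Leffler function. As in all the previous theorems, by the spectral calculus it suffices to establish the four estimates for $\delta=2$, so I would work throughout with $\|w(t)\|_{\mathcal{H}_\mfL^2}^2=\sum_{\zeta\in I}(1+\gamma_\zeta)^2|w_\zeta(t)|^2$ via the Plancherel identity.

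First I would apply the $\mfL$-Fourier transform to \eqref{multi-wave}. Writing $w(t)=\sum_{\zeta\in I}w_\zeta(t)e_\zeta$ and using $\mfL e_\zeta=\gamma_\zeta e_\zeta$, the problem decouples into the scalar multi-term fractional Cauchy problems
\begin{equation*}
\prescript{C}{}\partial_{t}^{\beta}w_\zeta(t)+\sum_{k=1}^{m}\sigma_k\prescript{C}{}\partial_{t}^{\beta_k}w_\zeta(t)+\gamma_\zeta w_\zeta(t)=0,\quad w_\zeta(0)=(w_0,e_\zeta),\quad w_\zeta'(0)=(w_1,e_\zeta),
\end{equation*}
one for each $\zeta\in I$. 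Taking the Laplace transform in $t$ and collecting the initial-value contributions of each Caputo term yields
\begin{equation*}
\widehat{w_\zeta}(s)=\frac{\big(s^{\beta-1}+\sum_{k=1}^{m}\sigma_k s^{\beta_k-1}\big)(w_0,e_\zeta)+\big(s^{\beta-2}+\sum_{k=1}^{m}\sigma_k s^{\beta_k-2}\big)(w_1,e_\zeta)}{s^{\beta}+\sum_{k=1}^{m}\sigma_k s^{\beta_k}+\gamma_\zeta},
\end{equation*}
and I would then invert termwise using \cite[Theorem 2.3]{laplace-inverse} (exactly as in the proof of Theorem \ref{multi-thm}), which produces the multivariate Mittag--Leffler series \eqref{solution-wave-multi} with $\sigma_0=1$, $\beta_0=\beta$.

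The decay estimates then reduce to a single pointwise bound. By \cite[Lemma 3.3]{karel} each Fourier coefficient satisfies
\begin{equation*}
|w_\zeta(t)|\leqslant C_{T,\vec{\beta},\vec{\sigma}}\,\frac{1}{1+\gamma_\zeta t^{\beta}}\Big(\sum_{k=0}^{m}\sigma_k t^{\beta-\beta_k}\Big)\big(|(w_0,e_\zeta)|+t\,|(w_1,e_\zeta)|\big),\quad 0<t\leqslant T.
\end{equation*}
Inserting this into $\|w(t)\|_{\mathcal{H}_\mfL^2}^2$, the common prefactor $\big(\sum_{k}\sigma_k t^{\beta-\beta_k}\big)^2$ factors out and the problem collapses to estimating
\begin{equation*}
J_1=\sum_{\zeta\in I}\frac{(1+\gamma_\zeta)^2}{(1+\gamma_\zeta t^{\beta})^2}|(w_0,e_\zeta)|^2,\qquad J_2=t^2\sum_{\zeta\in I}\frac{(1+\gamma_\zeta)^2}{(1+\gamma_\zeta t^{\beta})^2}|(w_1,e_\zeta)|^2,
\end{equation*}
which are precisely the quantities already controlled in the single-term wave proof. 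Consequently the four cases follow identically: in \emph{Case (1)} I use boundedness of $(1+\gamma_\zeta)^2/(1+\gamma_\zeta t^\beta)^2$ together with the crude bound $J_2\leqslant t^2\|w_1\|_{\mathcal{H}^2_\mfL}^2$; in \emph{Case (2)} I extract the factor $(1+t^\beta)^{-2}$ through the smallest eigenvalue $\gamma_\zeta^*>0$; in \emph{Case (3)} I use $\sup_{t>0}\tfrac{t\gamma_\zeta}{1+\gamma_\zeta t^\beta}\leqslant C_\beta\gamma_\zeta^{1-1/\beta}$ to trade two $\mfL$-derivatives for the $\mathcal{H}^{\delta-2/\beta}_\mfL$ norm of $w_1$; and in \emph{Case (4)} I use $\tfrac{1+\gamma_\zeta}{1+\gamma_\zeta t^\beta}\leqslant 1+t^{-\beta}$ to land in $\mathcal{H}^{\delta-2}_\mfL$.

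I expect the main obstacle to be the verification of the hypotheses of \cite[Lemma 3.3]{karel} and of the inversion theorem, rather than the summation, which is routine. The constant in that lemma genuinely depends on $T$, which is exactly why the statement is confined to $0<t\leqslant T$; one must also check the sign and ordering conditions on the arguments $-\sigma_k t^{\beta-\beta_k}$ and $-\gamma_\zeta t^{\beta}$ that the estimate requires, and confirm that the formal Laplace inversion remains valid even when the orders $\beta_k$ straddle $1$, so that the various Caputo terms formally call for different numbers of initial data. Once these points are settled, absolute and uniform convergence of \eqref{solution-wave-multi}, and hence continuity of $w(t)$, follow from the same pointwise bound exactly as in Theorem \ref{heat-thm}.
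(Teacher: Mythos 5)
Your proposal follows the paper's proof essentially verbatim: the same $\mfL$-Fourier decoupling into scalar multi-term Cauchy problems, the same Laplace transform and inversion via \cite[Theorem 2.3]{laplace-inverse}, the same pointwise bound from \cite[Lemma 3.3]{karel}, and the same reduction (for $\delta=2$ by spectral calculus) of the four Sobolev estimates to the quantities $J_1$, $J_2$ already controlled in the single-term wave theorem. The one point you flag as delicate — that Caputo terms with orders $\beta_k$ on either side of $1$ formally contribute different numbers of initial-data terms to the Laplace transform — is handled in the paper simply by assuming, ``without loss of generality'', that $\beta_i>1$ for all $i$, so your instinct that this needs justification is if anything more careful than the paper itself.
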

\begin{proof}
By applying the $\mfL$-Fourier transform to equation \eqref{multi-wave} we obtain
\[
\prescript{C}{}\partial_{t}^{\beta}w_\zeta(t)+\sigma_1\prescript{C}{}\partial_{t}^{\beta_1}w_\zeta(t)+\cdots+\sigma_m\prescript{C}{}\partial_{t}^{\beta_m}w_\zeta(t)+\gamma_\zeta w_\zeta(t)=0,\quad t>0,\quad\text{for each}\quad \zeta\in I.
\]
By using the Laplace transform in the time-variable one and assuming (without lossing the generality) $\beta_i>1$ for $i=1,\ldots,m$ we get 
\[
\begin{split}
s^{\beta}\widehat{w_\zeta}(s)&-s^{\beta-1}(w_0,e_\zeta)-s^{\beta-2}(w_1,e_\zeta) \\
&+\sigma_1 s^{\beta_1}\widehat{w_\zeta}(s)-\sigma_1 s^{\beta_1-1}(w_0,e_\zeta)-\sigma_1 s^{\beta_1-2}(w_1,e_\zeta)+\cdots \\
&\cdots+\sigma_m s^{\beta_m}\widehat{w_\zeta}(s)-\sigma_m s^{\beta_m-1}(w_0,e_\zeta)-\sigma_m s^{\beta_m-2}(w_1,e_\zeta)+\gamma_\zeta\widehat{w_\zeta}(s)=0, \,\, s>0.
\end{split}
\]
Thus
\begin{align*}
\widehat{w_\zeta}(s)=&\frac{s^{\beta-1}+\sigma_1 s^{\beta_1-1}+\cdots+\sigma_m s^{\beta_m-1}}{s^{\beta}+\sigma_1 s^{\beta_1}+\cdots+\sigma_m s^{\beta_m}+\gamma_\zeta}(w_0,e_\zeta) \\
&+\frac{s^{\beta-2}+\sigma_1 s^{\beta_1-2}+\cdots+\sigma_m s^{\beta_m-2}}{s^{\beta}+\sigma_1 s^{\beta_1}+\cdots+\sigma_m s^{\beta_m}+\gamma_\zeta}(w_1,e_\zeta)
\quad s>0. 
\end{align*}
By applying the inverse Laplace transform and \cite[Theorem 2.3]{laplace-inverse} we arrive at
\begin{align*}
w_\zeta(t)&=\sum_{k=0}^{m}\sigma_k t^{\beta-\beta_k}E_{(\beta-\beta_1,\ldots,\beta-\beta_m,\beta),\beta-\beta_k+1}(-\sigma_1 t^{\beta-\beta_1},\ldots \\
&\hspace{5cm}\ldots,-\sigma_m t^{\beta-\beta_m},-\gamma_\zeta t^{\beta})(w_0,e_\zeta) \\
&+\sum_{k=0}^{m}\sigma_k t^{\beta-\beta_k+1}E_{(\beta-\beta_1,\ldots,\beta-\beta_m,\beta),\beta-\beta_k+2}(-\sigma_1 t^{\beta-\beta_1},\ldots \\
&\hspace{5cm}\ldots,-\sigma_m t^{\beta-\beta_m},-\gamma_\zeta t^{\beta})(w_1,e_\zeta),
\end{align*}
where $\sigma_0=1$ and $\beta_0=\beta.$ Hence by \cite[Lemma 3.3]{karel} (\cite[Lemma 3.2]{multi-estimate} or \cite[Lemma 3]{otra-estimacion}) we get
\begin{align}
|w_\zeta(t)|&\leqslant \sum_{k=0}^{m}\sigma_k t^{\beta-\beta_k}|E_{(\beta-\beta_1,\ldots,\beta-\beta_m,\beta),\beta-\beta_k+1}(-\sigma_1 t^{\beta-\beta_1},\ldots \nonumber\\
&\hspace{5cm}\ldots,-\sigma_m t^{\beta-\beta_m},-\gamma_\zeta t^{\beta})||(w_0,e_\zeta)| \nonumber\\
&+\sum_{k=0}^{m}\sigma_k t^{\beta-\beta_k+1}|E_{(\beta-\beta_1,\ldots,\beta-\beta_m,\beta),\beta-\beta_k+2}(-\sigma_1 t^{\beta-\beta_1},\ldots \nonumber\\
&\hspace{5cm}\ldots,-\sigma_m t^{\beta-\beta_m},-\gamma_\zeta t^{\beta})||(w_1,e_\zeta)| \nonumber\\
&\leqslant \frac{C_{T,\vec{\beta},\vec{\sigma}}}{1+\gamma_\zeta t^{\beta}}\left(|(w_0,e_\zeta)|\sum_{k=0}^{m}\sigma_k t^{\beta-\beta_k}+|(w_1,e_\zeta)|\sum_{k=0}^{m}\sigma_k t^{\beta-\beta_k+1}\right),\quad 0<t\leqslant T.\label{multi-estimate-wave}
\end{align}
Thus
\begin{align*}
\|w(t)\|_{\mathcal{H}_\mfL^2}^2&=\sum_{\zeta\in I}(1+\gamma_\zeta)^2|w_\zeta(t)|^2 \\
&\leqslant C_{T,\vec{\beta},\vec{\sigma}}^2 \left(\sum_{k=0}^{m}\sigma_k t^{\beta-\beta_k}\right)^2\sum_{\zeta\in I}\frac{(1+\gamma_\zeta)^2}{(1+\gamma_\zeta t^{\beta})^2}\big(|(w_0,e_\zeta)|^2+t^2|(w_1,e_\zeta)|\big) \\
&\leqslant 
\left\{
\begin{array}{rccl}
&\displaystyle C_{T,\vec{\beta},\vec{\sigma}}^2 \left(\sum_{k=0}^{m}\sigma_k t^{\beta-\beta_k}\right)^2 \big(\|w_0\|_{\mathcal{H}_\mfL^2}^2+t^2 \|w_1\|_{\mathcal{H}_\mfL^2}^2 \big),\,\, 0\in\sigma(\mfL), \\
&\displaystyle C_{T,\vec{\beta},\vec{\sigma}}^2 \left(\sum_{k=0}^{m}\sigma_k t^{\beta-\beta_k}\right)^2 (1+t^{\beta})^{-2}\big(\|w_0\|_{\mathcal{H}_\mfL^2}^2+t^2 \|w_1\|_{\mathcal{H}_\mfL^2}^2\big),\,\,0\notin\sigma(\mfL), \\
&\displaystyle C_{T,\vec{\beta},\vec{\sigma}}^2\left(\sum_{k=0}^{m}\sigma_k t^{\beta-\beta_k}\right)^2\bigg(\|w_0\|_{\mathcal{H}_\mfL^2}^2+(1+t)^2\|w_1\|_{\mathcal{H}_{\mfL}^{\frac{2(\beta-1)}{\beta}}}^2\bigg), \\
&\displaystyle C_{T,\vec{\beta},\vec{\sigma}}^2 (1+t^{-\beta})^2\left(\sum_{k=0}^{m}\sigma_k t^{\beta-\beta_k}\right)^2\big(\|w_0\|_{\mathcal{H}}^2+t^2 \|w_1\|_{\mathcal{H}}^2\big).
\end{array}
\right.
\end{align*}
The proof is complete.
\end{proof}

\begin{rem}
The analysis made in this section just considered homogeneous equations. Nevertheless, the same analysis will work for nonhomogeneous type under some suitable conditions on the source function.    
\end{rem}

\begin{rem}
Everywhere in this section we considered $\mfL:\text{Dom}(\mfL)\subset\mathcal{H}\to\mathcal{H}$ to be a positive linear operator densely defined in a separable Hilbert space $\mathcal{H}$. Notice that the assumption on densely defined operator allows us to think about operators of unbounded type. In fact, if we consider an operator $L:\mathcal{H}\to\mathcal{H}$ defined in the whole space (assume complex Hilbert space) and positive, then $L$ is bounded (Hellinger-Toeplitz Theorem).         
\end{rem}

\section{Heat and wave type equations on a graded Lie group}\label{ss-graded}

In this section we analyse the heat and wave type equations on a graded Lie group by using the tools of the Fourier analysis of the group. This work will complement the previous section as well as the works \cite{WRR,palmieri,RNT}.   

In the first part we prove the well-posedeness of $\mathcal{R}$-wave type equations and give Sobolev-norm estimates of the solution. In the second part, we focus on $F(\mathcal{R})$-heat type equations, where $F$ is some suitable function. In this case we also prove $L^p-L^q$ $(1\leqslant p\leqslant 2\leqslant q<+\infty)$ estimates for the solutions. We finish this section by studying multi-term heat and wave type equations in our setting.   

Let us point out the since the spectrum of the Rockland operator $\mathcal{R}$ is continuous, the setting here does not fall into the one in Section \ref{explicit}. However, we can make such a reduction by using the group Fourier transform, and in particular by considering the operator $\pi(\mathcal{R})$ that has discrete spectrum.

Let us fist recall the notion of the  \textit{Sobolev spaces} in this setting which will be used frequently in this section. For a detailed discussion on the Sobolev spaces in the setting of a graded Lie group, see \cite[Subsection 4.4.1]{FR16} and \cite{FR17}. 

Let $\mathcal{R}$ be a positive Rockland operator of homogeneous degree $\nu$, and let $s\in\mathbb{R}$. The \textit{homogeneous Sobolev space $\dot{L}^{2}_{s}(G)$} is the subspace of tempered distributions $\mathcal{S}'(G)$ obtained by the completion of $ \mathcal{S}(G)$ with respect to the (homogeneous) \textit{Sobolev norm}
\[
\|u\|_{\dot{L}^{2}_{s}(G)}:=\|\mathcal{R}_{2}^{\frac{s}{\nu}}u\|_{L^2(G)}\,,\quad u \in \mathcal{S}(G),
\]
where the operator $\mathcal{R}_2$ stands for the self-adjoint extension of $\mathcal{R}$ on $L^2(G)$. Under the same considerations, the \textit{(nonhomogeneous) Sobolev space $L^{2}_{s}(G)$} is the subspace of tempered distributions $\mathcal{S}'(G)$ obtained by the completion of $ \mathcal{S}(G)$ with respect to the (nonhomogeneous) \textit{Sobolev norm}
\[
\|u\|_{L^{2}_{s}(G)}:=\|(I+\mathcal{R}_{2})^{\frac{s}{\nu}}u\|_{L^2(G)}\,,\quad u \in \mathcal{S}(G)\,.
\]In the sequel, we will use the notation $\mathcal{R}$ for its self-adjoint extension on $L^2(G)$ as well. 

It is worth mentioning that the Sobolev spaces do not depend on the specific choice of $\mathcal{R}$, in the sense that, different choices of the latter produce equivalent norms, see \cite[Proposition 4.4.20]{FR16}.

\subsection{$\mathcal{R}$-wave type equations}

In this subsection we analyse the solution of the following equation:
\begin{equation}\label{wave}
\left\{ \begin{split}
\prescript{C}{}\partial_{t}^{\beta}w(t,x)+\mathcal{R}w(t,x)&=0,\quad t>0,\quad x \in G,  \\
w(t,x)|_{_{_{t=0}}}&=w_0(x)\,, \\
\partial_t w(t,x)|_{_{_{t=0}}}&=w_1(x)\,.
\end{split}
\right.
\end{equation}

In \eqref{wave} the Rockland operator $\mathcal{R}$ is regarded to be positive, $^{C}\partial_{t}^{\alpha}$ is the Dzhrbashyan-Caputo fractional derivative from \eqref{djerbashian} and $1< \beta <2$. The next theorem shows that the solution to the above equation is related with the Mittag-Leffler propagator defined in \eqref{Mittag-propagator}.  
\begin{thm}
    \label{THM:graded,wtyp} Let $G$ be a graded Lie group, and let $\mathcal{R}$ be a positive Rockland operator on $G$ of homogeneous degree $\nu$. On $G$ we consider the Cauchy problem \eqref{wave}. We have:
   
    \begin{enumerate}[label=(\alph*)]
        \item \label{itm:1est} for any $t>0$ the continuous in $t$ solution to the problem \eqref{wave} is explicitly given by 
        \[
w(t,x)=E_\beta(-t^{\beta}\mathcal{R})w_0(x)+tE_{\beta,2}(-t^{\beta}\mathcal{R})w_1(x)\,,\quad x \in G\,;
        \]
\item \label{itm:2est} if $(w_0,w_1) \in L^2(G)\times L^2(G)$, then the continuous in $t$ solution $w$ is unique and satisfies the estimate
\[
    \|w(t,\cdot)\|_{L^2(G)} \lesssim \|w_0\|_{L^2(G)}+t\|w_1\|_{L^2(G)}\,,\quad \text{for all} \quad t>0;
\]
\item \label{itm:3est}if $(w_0,w_1) \in L^{2}_{s}(G)\times L^{2}_{s}(G)$, then the continuous in $t$ solution $w$ is unique and satisfies the Sobolev-norm estimate
\[
\|w(t,\cdot)\|_{L^{2}_{s}(G)}\lesssim \|w_0\|_{L^{2}_{s}(G)}+t\|w_1\|_{L^{2}_{s}(G)}\,,\quad{\text{for any}}\quad s\in\mathbb{R},
\]
which, in particular, implies estimate in \ref{itm:2est} for $s=0$;
\item \label{itm:4est} if $(w_0,w_1) \in L^{2}_{s}(G)\times L^{2}_{s-\nu/\beta}(G)$, then the continuous in $t$ solution $w$ is unique and satisfies the Sobolev-norm estimate
\[
\|w(t,\cdot)\|_{L^{2}_{s}(G)}\lesssim \|w_0\|_{L^{2}_{s}(G)}+(1+t)\|w_1\|_{L^{2}_{s-\nu/\beta}(G)},\quad s\geqslant \frac{\nu}{\beta}.
\]
\item \label{itm:6est} Finally, we also obtain that 
\begin{align*}
     \|\partial_t w(t,\cdot)\|_{L^2(G)}&\lesssim 
\left\{
\begin{array}{rccl}
& t^{\beta-1}\|w_0\|_{\dot{L}_{\nu}^2(G)}+\|w_1\|_{L^2(G)},\,\quad (w_0,w_1)\in \dot{L}_{\nu}^2(G)\times L^2(G), \\
& \|w_0\|_{\dot{L}_{\nu/\beta}^2(G)}+\|w_1\|_{L^2(G)},\,\quad (w_0,w_1)\in \dot{L}_{\nu/\beta}^2(G)\times L^2(G), \\
& t^{-1}\|w_0\|_{L^2(G)}+\|w_1\|_{L^2(G)},\,\quad (w_0,w_1)\in L^2(G)\times L^2(G).
\end{array}
\right.     
 \end{align*}

    \end{enumerate}
\end{thm}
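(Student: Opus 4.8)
The strategy is to reduce \eqref{wave} to the discrete-spectrum analysis of Section \ref{explicit} by means of the group Fourier transform: although $\mathcal{R}$ has continuous spectrum on $L^2(G)$, for each $\pi\in\widehat{G}$ the operator $\pi(\mathcal{R})$ has discrete spectrum and the diagonal form \eqref{repr.pr}. Applying $\mathcal{F}_G$ to \eqref{wave} and using \eqref{matrix.repr}, the problem becomes, for each fixed $\pi$,
\[
\prescript{C}{}\partial_t^\beta\widehat{w}(t,\pi)+\pi(\mathcal{R})\widehat{w}(t,\pi)=0,\qquad \widehat{w}(0,\pi)=\widehat{w_0}(\pi),\quad \partial_t\widehat{w}(0,\pi)=\widehat{w_1}(\pi).
\]
Since $\pi(\mathcal{R})$ is diagonal, this system decouples entrywise into scalar Caputo equations with coefficient $\pi_i^2$, which are precisely the equations solved by the Laplace-transform computation in the proof of the wave result of Section \ref{explicit}. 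That computation gives, entrywise and hence in operator form,
\[
\widehat{w}(t,\pi)=E_\beta(-t^\beta\pi(\mathcal{R}))\widehat{w_0}(\pi)+tE_{\beta,2}(-t^\beta\pi(\mathcal{R}))\widehat{w_1}(\pi),
\]
and the Fourier inversion formula \eqref{Four.inv.for} yields the representation in \ref{itm:1est}. Uniqueness follows from the scalar uniqueness on each entry together with the injectivity of $\mathcal{F}_G$.

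For \ref{itm:2est}--\ref{itm:4est} I would pass to the Fourier side via the Plancherel identity \eqref{plan.gr}, writing
\[
\|w(t,\cdot)\|_{L^2_s(G)}^2=\int_{\widehat{G}}\big\|(I+\pi(\mathcal{R}))^{s/\nu}\widehat{w}(t,\pi)\big\|_{\HS}^2\,d\mu(\pi),
\]
and then estimating the integrand eigenvalue by eigenvalue. Each eigenvalue $\lambda=\pi_i^2$ contributes exactly the scalar quantity analysed in Section \ref{explicit}: the uniform bound \eqref{uniform-estimate} controls $|E_\beta(-\lambda t^\beta)|$ and $|E_{\beta,2}(-\lambda t^\beta)|$ by $C/(1+\lambda t^\beta)$, and the Sobolev weight $(1+\lambda)^{s/\nu}$ plays the role of $(1+\gamma_\zeta)^{\delta/2}$ there. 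Thus \ref{itm:3est} (and \ref{itm:2est} as its $s=0$ case) reproduces \eqref{wave-1}, while the loss of $\nu/\beta$ derivatives on $w_1$ in \ref{itm:4est} comes from the same supremum bound $\sup_{t>0}\lambda t/(1+\lambda t^\beta)\le C_\beta\lambda^{1-1/\beta}$ that was used to obtain \eqref{wave-3}.

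For \ref{itm:6est} I would first differentiate the representation in \ref{itm:1est}. Using the Mittag-Leffler identities $\partial_t E_\beta(-t^\beta\mathcal{R})=-t^{\beta-1}\mathcal{R}\,E_{\beta,\beta}(-t^\beta\mathcal{R})$ and $\partial_t\big(tE_{\beta,2}(-t^\beta\mathcal{R})\big)=E_\beta(-t^\beta\mathcal{R})$, one obtains
\[
\partial_t w(t,\cdot)=-t^{\beta-1}\mathcal{R}\,E_{\beta,\beta}(-t^\beta\mathcal{R})w_0+E_\beta(-t^\beta\mathcal{R})w_1.
\]
The second term is bounded by $\|w_1\|_{L^2(G)}$ via \eqref{uniform-estimate}. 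For the first term, the key object is the spectral multiplier $m_t(\lambda)=t^{\beta-1}\lambda\,E_{\beta,\beta}(-\lambda t^\beta)$, which \eqref{uniform-estimate} bounds by $C\,t^{\beta-1}\lambda/(1+\lambda t^\beta)$. The three displayed estimates then follow by three different ways of bounding this quantity: dropping the denominator gives $|m_t(\lambda)|\lesssim t^{\beta-1}\lambda$, hence the $\dot L^2_\nu$ bound; writing $m_t(\lambda)\le C\,t^{-1}\,\lambda t^\beta/(1+\lambda t^\beta)\le C\,t^{-1}$ gives the $L^2$ bound; and the substitution $u=\lambda t^\beta$ yields $\sup_{t>0}m_t(\lambda)\le C_\beta\lambda^{1/\beta}\sup_{u>0}u^{(\beta-1)/\beta}/(1+u)\lesssim\lambda^{1/\beta}$, hence the $\dot L^2_{\nu/\beta}$ bound. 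The main obstacle is this last step: extracting the sharp, $t$-independent power $\lambda^{1/\beta}$ from a single multiplier requires the scaling substitution above, and one must check that the exponent $(\beta-1)/\beta\in(0,1)$ keeps $u^{(\beta-1)/\beta}/(1+u)$ bounded on $(0,\infty)$, which is exactly where the restriction $1<\beta<2$ enters.
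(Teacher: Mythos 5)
Your proposal is correct and follows essentially the same route as the paper's proof: group Fourier transform plus the diagonal form of $\pi(\mathcal{R})$ to decouple into scalar Caputo equations solved by the Laplace transform, Fourier inversion for the representation, the Mittag--Leffler bound \eqref{uniform-estimate} combined with the Plancherel formula for the $L^2$ and Sobolev estimates, and differentiation of the propagators with the same three ways of bounding $t^{\beta-1}\lambda/(1+\lambda t^{\beta})$ for part (e). The only cosmetic difference is that you invoke the Mittag--Leffler differentiation identities and a scaling substitution where the paper differentiates the series term by term and computes the supremum of $g(t)=t^{\beta-1}\pi_i^{2}/(1+\pi_i^{2}t^{\beta})$ directly, which amounts to the same computation.
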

\begin{proof} We apply the group Fourier transform with respect to $x$ for the equation \eqref{wave} and get
    \begin{equation}\label{wave-f}
\left\{ \begin{split}
\prescript{C}{}\partial_{t}^{\beta}\widehat{w}(t,\pi)+\pi(\mathcal{R})\widehat{w}(t,\pi)&=0,\quad t>0\,\quad \pi \in \widehat{G},   \\
\widehat{w}(t,\pi)|_{_{_{t=0}}}&=\widehat{w}_0(\pi)\,, \\
\partial_t \widehat{w}(t,\pi)|_{_{_{t=0}}}&=\widehat{w}_1(\pi)\,.
\end{split}
\right.
\end{equation}
The latter in view of \eqref{repr.pr} yields the infinite dimensional system
 \begin{equation}\label{wave-f-m}
\left\{ \begin{split}
\prescript{C}{}\partial_{t}^{\beta}\widehat{w}(t,\pi)_{i,j}+\pi_{i}^{2}\widehat{w}(t,\pi)_{i,j}&=0,\quad t>0\,,  \\
\widehat{w}(t,\pi)_{i,j}|_{_{_{t=0}}}&=\widehat{w}_0(\pi)_{i,j}\,, \\
\partial_t \widehat{w}(t,\pi)_{i,j}|_{_{_{t=0}}}&=\widehat{w}_1(\pi)_{i,j}\,.
\end{split}
\right.
\end{equation}
Let us now fix $(i,j) \in \mathbb{N} \times \mathbb{N}$ and $\pi \in \widehat{G}$. The  system \eqref{wave-f-m} is now decoupled and under this consideration each equation described by \eqref{wave-f-m} contains only time dependent functions. We can then apply the Laplace transform in $t$ and get
\begin{equation}\label{wave-f-m-ltr}
\left\{ \begin{split}
s^{\beta}\widetilde{\widehat{w}}(s,\pi)_{i,j}-s^{\beta-1}\widehat{w}_0(\pi)_{i,j}-s^{\beta-2}\widehat{w}_1(\pi)_{i,j}+\pi_{i}^{2}\widetilde{\widehat{w}}(s,\pi)_{i,j}&=0,\quad s>0\,,  \\
\widehat{w}(t,\pi)_{i,j}|_{_{_{t=0}}}&=\widehat{w}_0(\pi)_{i,j}\,, \\
\partial_t \widehat{w}(t,\pi)_{i,j}|_{_{_{t=0}}}&=\widehat{w}_1(\pi)_{i,j}\,,
\end{split}
\right.
\end{equation}
which in turn implies
\[
\widetilde{\widehat{w}}(s,\pi)_{i,j}=\frac{s^{\beta-1}}{s^{\beta}+\pi_{i}^{2}}\widehat{w}_{0}(\pi)_{i,j}+\frac{s^{\beta-2}}{s^{\beta}+\pi_{i}^{2}}\widehat{w}_{1}(\pi)_{i,j}\,,\quad s>0\,.
\]
An application of the inverse Laplace transform, see e.g. \cite[Theorem 2.1]{new-mittag-add} yields
\begin{equation}\label{wide.w}
\widehat{w}(t,\pi)_{i,j}=E_\beta(-\pi_{i}^{2}t^{\beta})\widehat{w}_0(\pi)_{i,j}+t E_{\beta,2}(-\pi_{i}^{2}t^{\beta})\widehat{w}_{1}(\pi)_{i,j}\,,
\end{equation}
and after summation over $i,j$, we get the following representation of the Fourier transform of the solution
\begin{align*}
\widehat{w}(t,\pi)&=\sum_{k=0}^{+\infty}\frac{(-t^{\beta})^k}{\Gamma(\beta k+1)}\big(\pi(\mathcal{R})\big)^k \widehat{w_0}(\pi)+t\sum_{k=0}^{+\infty}\frac{(-t^{\beta})^k}{\Gamma(\beta k+2)}\big(\pi(\mathcal{R})\big)^k \widehat{w_1}(\pi) \\
&=\sum_{k=0}^{+\infty}\frac{(-t^{\beta})^k}{\Gamma(\beta k+1)}\widehat{\mathcal{R}^k w_0}(\pi)+t\sum_{k=0}^{+\infty}\frac{(-t^{\beta})^k}{\Gamma(\beta k+2)}\widehat{\mathcal{R}^k w_1}(\pi).
\end{align*}

On the other hand by the Fourier inversion formula \eqref{Four.inv.for} the solution $w$ can be represented by 
\[
w(t,x)=\int_{\widehat{G}}
\textnormal{Tr}[\pi(x)\widehat{w}(t,\pi)]\,d\mu(\pi)\,,
\]
or, by the above
\begin{align*}
w(t,x)&=\sum_{k=0}^{+\infty}\frac{(-t^{\beta})^k}{\Gamma(\beta k+1)}\int_{\widehat{G}}
\textnormal{Tr}[\pi(x)\widehat{\mathcal{R}^k w_0}(\pi)]\,d\mu(\pi)\, \\
&\hspace{3cm}+t\sum_{k=0}^{+\infty}\frac{(-t^{\beta})^k}{\Gamma(\beta k+2)}\int_{\widehat{G}}
\textnormal{Tr}[\pi(x)\widehat{\mathcal{R}^k w_1}(\pi)]\,d\mu(\pi)\, \\
&=\sum_{k=0}^{+\infty}\frac{(-t^{\beta}\mathcal{R})^k}{\Gamma(\beta k+1)}w_0(x)+t\sum_{k=0}^{+\infty}\frac{(-t^{\beta}\mathcal{R})^k}{\Gamma(\beta k+2)}w_1(x).
\end{align*}
The last implies that 
\[
w(t,x)=E_\beta(-t^{\beta}\mathcal{R})w_0(x)+tE_{\beta,2}(-t^{\beta}\mathcal{R})w_1(x)\,,
\]
and we have proved \ref{itm:1est}.
Now, to achieve an estimate for the $L^2$-norm in $x$ of $w(\cdot,t)$ we first combine the expression \eqref{wide.w} with estimates for the Mittag--Leffler functions (see \cite[Theorem 1.6]{page 35}) and obtain 
\begin{eqnarray}\label{pg35}
|\widehat{w}(t,\pi)_{i,j}|\leqslant C \frac{1}{1+\pi_{i}^{2}t^{\beta}}|\widehat{w}_{0}(\pi)_{i,j}|+C \frac{t}{1+\pi_{i}^{2}t^{\beta}}|\widehat{w}_1(\pi)_{i,j}|\,.
\end{eqnarray}
The estimate \eqref{pg35} holds true uniformly in $\pi \in \widehat{G}$ and all $i,j$. Recall that for a Hilbert-Schmidt operator $A$ on $\mathcal{H}$ one has 
\[
\|A\|^{2}_{\textnormal{HS}(\mathcal{H})}=\sum_{k,l}|\langle A \varphi_{k},\varphi_{l}\rangle|^2\,,
\]
where $\{\varphi_1,\varphi_2,\cdots\}$ is some orthonormal basis of $\mathcal{H}$. Now, observe that by \eqref{pg35} we get
\[
|\widehat{w}(t,\pi)_{i,j}|\lesssim |\widehat{w}_{0}(\pi)_{i,j}|+ t|\widehat{w}_1(\pi)_{i,j}|\,,  \quad \text{for all} \quad t>0\,,
\]
where the latter yields, after summation over $i,j$, the following estimate for the Hilbert-Schmidt norm of the operator $\widehat{w}(t,\pi)$:
\[
\|\widehat{w}(t,\pi)\|_{\textnormal{HS}(\mathcal{H}_{\pi})}\lesssim \|\widehat{w}_0(\pi)\|_{\textnormal{HS}(\mathcal{H}_{\pi})}+t\|\widehat{w}_1(\pi)\|_{\textnormal{HS}(\mathcal{H}_{\pi})}\,.
\]

Therefore, integrating the above over $\widehat{G}$ against the Plancherel measure and using the Plancherel formula \eqref{plan.gr} we get 
\begin{equation}
    \label{after.pl}
    \|w(t,\cdot)\|_{L^2(G)}\lesssim \|w_0\|_{L^2(G)}+t\|w_1\|_{L^2(G)}\,, \quad \text{for all} \quad t>0\,,
\end{equation}
and we have proved \ref{itm:2est}.

Now, notice that for any $i,j \in \mathbb{N}$ using \eqref{pg35} we have 
\begin{align}\label{withpi}
(1+\pi_{i}^2)^{s/\nu}&|\widehat{w}(t,\pi)_{i,j}|\lesssim  \frac{(1+\pi_{i}^2)^{s/\nu}}{1+\pi_{i}^{2}t^{\beta}}\bigg(|\widehat{w}_{0}(\pi)_{i,j}|+ t|\widehat{w}_1(\pi)_{i,j}|\bigg)\nonumber\\
&\lesssim  
\left\{
\begin{array}{rccl}
&(1+\pi_{i}^2)^{s/\nu}\bigg(|\widehat{w}_{0}(\pi)_{i,j}|+ t|\widehat{w}_1(\pi)_{i,j}|\bigg),\quad s\in\mathbb{R},\nonumber \\
&(1+\pi_{i}^2)^{s/\nu}|\widehat{w}_{0}(\pi)_{i,j}|+\bigg(t+\pi_i^{2\left(\frac{s}{\nu}-\frac{1}{\beta}\right)}\bigg)|\widehat{w}_1(\pi)_{i,j}|,\quad s\geqslant \frac{\nu}{\beta},
\end{array}
\right.
\end{align}
where for the second term of the second estimate we have considered the supremum of the function $h(t)=\frac{t}{1+\pi_{i}^{2}t^{\beta}}$.
Therefore, another application of the Plancherel formula yields
\begin{align*}
\|w(t,\cdot)\|_{L^{2}_{s}(G)}&\lesssim  
\left\{
\begin{array}{rccl}
&\|w_0\|_{L^{2}_{s}(G)}+t\|w_1\|_{L^{2}_{s}(G)},\quad s\in\mathbb{R}, \\
&\|w_0\|_{L^{2}_{s}(G)}+t\|w_1\|_{L^{2}(G)}+\|w_1\|_{\dot{L}^{2}_{s-\nu/\beta}(G)},\quad s\geqslant \frac{\nu}{\beta},
\end{array}
\right. \\
&\lesssim  
\left\{
\begin{array}{rccl}
&\|w_0\|_{L^{2}_{s}(G)}+t\|w_1\|_{L^{2}_{s}(G)},\quad s\in\mathbb{R}, \\
&\|w_0\|_{L^{2}_{s}(G)}+(1+t)\|w_1\|_{L^{2}_{s-\nu/\beta}(G)},\quad s\geqslant \frac{\nu}{\beta},
\end{array}
\right.
\end{align*}
which gives \ref{itm:3est} and \ref{itm:4est}. 

It remains to prove the estimate \ref{itm:6est}. To this end we differentiate with respect to $t$ the formula \eqref{wide.w} and get
\begin{eqnarray*}
    \partial_t \widehat{w}(t,\pi)_{i,j} & = & \sum_{l=1}^{+\infty}\frac{(-\pi_{i}^{2})^{l}\beta l t^{\beta l-1}}{\Gamma(\beta l+1)}\widehat{w}_0(\pi)_{i,j}+\sum_{l=1}^{+\infty}\frac{(-\pi_{i}^{2})^{l}(\beta l+1) t^{\beta l}}{\Gamma(\beta l+2)}\widehat{w}_1(\pi)_{i,j}\\
    &= & -\pi_{i}^{2}t^{-1+\beta}E_{\beta,\beta}(-\pi_{i}^{2}t^{\beta})\widehat{w}_0(\pi)_{i,j}+E_{\beta}(-\pi_{i}^{2}t^{\beta})\widehat{w}_1(\pi)_{i,j}\,,
\end{eqnarray*}
where for the second equality we have used formula \eqref{multivariateML}. Arguing as we did earlier we can estimate the elements of the infinite matrix representation $|\partial_t \widehat{w}(t,\pi)_{i,j}|$ as follows:
 \begin{align*}
     |\partial_t \widehat{w}(t,\pi)_{i,j}| & \lesssim t^{\beta-1}\pi_{i}^{2}|E_{\beta,\beta}(-\pi_{i}^{2}t^{\beta})| |\widehat{w}_0(\pi)_{i,j}|+|E_{\beta}(-\pi_{i}^{2}t^{\beta})| |\widehat{w}_{1}(\pi)_{i,j}|\nonumber\\
     & \lesssim  \frac{t^{\beta-1}\pi_{i}^{2}}{1+\pi_{i}^{2}t^{\beta}}|\widehat{w}_0(\pi)_{i,j}|+\frac{1}{1+\pi_{i}^{2}t^{\beta}}|\widehat{w}_{1}(\pi)_{i,j}|\nonumber\\
     & \lesssim  \frac{t^{\beta-1}\pi_{i}^{2}}{1+\pi_{i}^{2}t^{\beta}}|\widehat{w}_0(\pi)_{i,j}|+|\widehat{w}_{1}(\pi)_{i,j}|\, \\
&\lesssim 
\left\{
\begin{array}{rccl}
& t^{\beta-1}\pi_i^{2}|\widehat{w}_0(\pi)_{i,j}|+|\widehat{w}_{1}(\pi)_{i,j}|\, \\
& \pi_i^{2/\beta}|\widehat{w}_0(\pi)_{i,j}|+|\widehat{w}_{1}(\pi)_{i,j}|\, \\
& t^{-1}|\widehat{w}_0(\pi)_{i,j}|+|\widehat{w}_{1}(\pi)_{i,j}|,
\end{array}
\right.     
 \end{align*}
where for the second estimate we have calculated the supermum of the function $g(t)= \frac{t^{\beta-1}\pi_{i}^{2}}{1+\pi_{i}^{2}t^{\beta}}$ and we arrive easily at the estimate \ref{itm:6est}. 
\end{proof}

\subsection{$L^p$-$L^q$ estimates for $F(\mathcal{R})$-heat type equations}\label{heat-section}

We investigate the $L^p$-$L^q$ estimates and asymptotic time-behaviour of the following $F(\mathcal{R})$-heat type equation on a graded Lie group $G$:  
\begin{equation}\label{HeatTypeEquationG}
\begin{split}
^{C}\partial_{t}^{\alpha}w(t,x)+F(\mathcal{R})w(t,x)&=0, \quad t>0,\,\, x\in G, \\
w(t,x)|_{_{_{t=0}}}&=w_0(x),
\end{split}
\end{equation}
where $^{C}\partial_{t}^{\alpha}$ is the Dzhrbashyan-Caputo fractional derivative from \eqref{djerbashian}, $\mathcal{R}$ is a positive Rockland operator, $F:[0,\infty) \rightarrow [0,\infty)$ is an increasing function such that $\displaystyle\lim_{s\to+\infty}F(s)=+\infty$ and $0<\alpha<1$. Here we will not treat the case $\alpha=1$ since it is already known, see \cite[Section 6]{RR2020}. In particular, in the next theorem we show  the existence of a unique continuous solution of equation \eqref{HeatTypeEquationG} by using the group Fourier transform. For the solution, we also provide the $L^p-L^q$ estimates for $1\leqslant p\leqslant 2\leqslant q<+\infty$. Moreover, we establish time-decay estimates for the general case of a graded Lie group, while in the examples on particular cases of such groups that follow, the decay is sharp.    

To understand the expressions \eqref{need} and \eqref{asymtotic-trace}, we refer to the discussion before Theorem \ref{thm1.5}, or to the work \cite{RR2020}.

\begin{thm}\label{Main-heat}
Let $F:[0,\infty) \rightarrow [0,\infty)$ be an increasing function such that $\displaystyle\lim_{s\to+\infty}F(s)=+\infty$. Let $G$ be a graded Lie group, $\mathcal{R}$ be a positive Rockland operator on $G$ of homogeneous degree $\nu$, $0<\alpha\leqslant 1$ and $1\leqslant p\leqslant 2\leqslant q<+\infty$. Then there exists a unique continuous solution to the $F(\mathcal{R})$-heat type equation \eqref{HeatTypeEquationG} that is given by
\[
w(t,x)=E_\alpha(-t^{\alpha}F(\mathcal{R}))w_0(x),\quad t>0,\,\,x\in G\,,
\]
where the propagator can be precisely expressed as
\[
E_\alpha(-t^{\alpha}F(\mathcal{R})) = \sum_{k=0}^{+\infty} \frac{(-t^{\alpha}F(\mathcal{R}))^k}{\Gamma(\alpha k+1)}.
\]
Additionally, if $w_0 \in L^p(G)$, and  
\begin{equation}\label{need}
\sup_{t>0}\sup_{s>0}[\tau\big(E_{(0,s)}(\mathcal{R})\big)]^{\frac{1}{p}-\frac{1}{q}}E_\alpha(-t^{\alpha}F(s))<+\infty,   
\end{equation}
then there exits a unique solution $w\in\mathcal{C}\big([0,+\infty);L^q(G)\big).$ 

In particular, if for some $\gamma>0$ we have 
\begin{equation}\label{asymtotic-trace}
\tau\big(E_{(0,s)}(\mathcal{R})\big)\lesssim s^{\gamma},\quad s\to+\infty,\quad \text{and}\quad F(s)=s,
\end{equation}
then \eqref{need} is satisfied for any $1<p\leqslant 2\leqslant q<+\infty$ such that $\frac{1}{\gamma}>\frac{1}{p}-\frac{1}{q}$, and we get the following time decay rate for the solution of equation \eqref{HeatTypeEquationG}: 
\[
\|w(t,\cdot)\|_{L^q(G)}\leqslant C_{\alpha,\gamma,p,q}t^{-\alpha\gamma\left(\frac{1}{p}-\frac{1}{q}\right)}\|w_0\|_{L^p(G)},    
\]
where $C_{\alpha,\gamma,p,q}$ does not depend on $w_0$ and $t>0.$

\end{thm}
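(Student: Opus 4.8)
The plan is to run the same group-Fourier scheme used for the $\mathcal{R}$-wave equation in Theorem \ref{THM:graded,wtyp}, and then to turn the resulting spectral multiplier into an $L^p$--$L^q$ bound by invoking the Fourier multiplier theorem of \cite{RR2020}. \textbf{Step 1 (explicit solution).} First I would apply the group Fourier transform in $x$ to \eqref{HeatTypeEquationG}. Since $\mathcal{R}$ is a positive Rockland operator, the representation \eqref{repr.pr} of $\pi(\mathcal{R})$ is diagonal with entries $\pi_i^2$, and by the spectral calculus $\pi(F(\mathcal{R}))$ is diagonal with entries $F(\pi_i^2)$. This decouples the transformed problem into the scalar Caputo equations
\[
^{C}\partial_{t}^{\alpha}\widehat{w}(t,\pi)_{i,j}+F(\pi_i^2)\widehat{w}(t,\pi)_{i,j}=0,\qquad \widehat{w}(t,\pi)_{i,j}|_{t=0}=\widehat{w}_0(\pi)_{i,j}.
\]
Taking the Laplace transform in $t$ gives $\widetilde{\widehat{w}}(s,\pi)_{i,j}=\frac{s^{\alpha-1}}{s^{\alpha}+F(\pi_i^2)}\widehat{w}_0(\pi)_{i,j}$, and the inverse Laplace transform (as in \cite[Theorem 2.1]{new-mittag-add}) yields $\widehat{w}(t,\pi)_{i,j}=E_\alpha(-t^\alpha F(\pi_i^2))\widehat{w}_0(\pi)_{i,j}$. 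Summing over $i,j$ and using the Fourier inversion formula \eqref{Four.inv.for} exactly as in Theorem \ref{THM:graded,wtyp} identifies the solution with the Mittag-Leffler propagator $w(t,x)=E_\alpha(-t^\alpha F(\mathcal{R}))w_0(x)$. Uniqueness is immediate since each transformed scalar problem has a unique solution, and continuity in $t$ follows from the entire dependence of $E_\alpha$ together with the uniform bounds below.

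\textbf{Step 2 ($L^p$--$L^q$ bound).} The operator $E_\alpha(-t^\alpha F(\mathcal{R}))$ is the spectral multiplier of $\mathcal{R}$ with symbol $\phi_t(s)=E_\alpha(-t^\alpha F(s))$. I would invoke the $L^p$--$L^q$ Fourier multiplier theorem of \cite{RR2020}, which for $1\leqslant p\leqslant 2\leqslant q<+\infty$ controls the operator norm by
\[
\|E_\alpha(-t^\alpha F(\mathcal{R}))\|_{L^p(G)\to L^q(G)}\lesssim \sup_{s>0}\big[\tau\big(E_{(0,s)}(\mathcal{R})\big)\big]^{\frac1p-\frac1q}E_\alpha(-t^\alpha F(s)).
\]
Condition \eqref{need} makes this finite uniformly in $t>0$, yielding $\|w(t,\cdot)\|_{L^q(G)}\lesssim\|w_0\|_{L^p(G)}$; combined with the continuity of $\phi_t$ in $t$ and dominated convergence against the Plancherel measure this gives $w\in\mathcal{C}([0,+\infty);L^q(G))$, with uniqueness inherited from Step 1.

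\textbf{Step 3 (sharp decay).} For the homogeneous case $F(s)=s$ and $\tau(E_{(0,s)}(\mathcal{R}))\lesssim s^\gamma$ of \eqref{asymtotic-trace}, I would estimate the symbol supremum at each fixed $t$. Using \eqref{uniform-estimate} (or \eqref{MLest2}), $E_\alpha(-t^\alpha s)\leqslant C/(1+t^\alpha s)$, so with $\theta:=\gamma(\tfrac1p-\tfrac1q)$ the substitution $u=t^\alpha s$ gives
\[
\sup_{s>0}s^{\theta}E_\alpha(-t^\alpha s)\lesssim \sup_{s>0}\frac{s^{\theta}}{1+t^\alpha s}=t^{-\alpha\theta}\sup_{u>0}\frac{u^{\theta}}{1+u}.
\]
The remaining supremum is finite precisely when $0<\theta<1$, that is when $\frac1\gamma>\frac1p-\frac1q$; inserting this into the bound of Step 2 produces $\|w(t,\cdot)\|_{L^q(G)}\leqslant C_{\alpha,\gamma,p,q}\,t^{-\alpha\gamma(1/p-1/q)}\|w_0\|_{L^p(G)}$.

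\textbf{Main obstacle.} The crux is Step 2: recognising $E_\alpha(-t^\alpha F(\mathcal{R}))$ as an admissible Fourier multiplier and checking the hypotheses of \cite{RR2020} in this graded setting (measurability and Plancherel-integrability of the relevant fields, and boundedness and decay of $\phi_t$ so that the quantile-type quantity $\tau(E_{(0,s)}(\mathcal{R}))$ governs the norm). A secondary subtlety is that in the homogeneous case the symbol supremum is finite only for each fixed $t$ and grows like $t^{-\alpha\theta}$ as $t\to0^{+}$ when $p<q$, so the decay estimate holds on $(0,\infty)$ rather than as continuity up to $t=0$ into $L^q(G)$, the latter requiring the full uniform condition \eqref{need}.
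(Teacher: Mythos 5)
Your proposal is correct and follows essentially the same route as the paper: group Fourier transform plus Laplace transform to decouple and obtain the Mittag--Leffler propagator, then the $L^p$--$L^q$ multiplier theorem of \cite{RR2020} (Theorems 5.1 and 6.1 there) with the Lorentz-norm quantity $\sup_{s>0}[\tau(E_{(0,s)}(\mathcal{R}))]^{1/p-1/q}E_\alpha(-t^\alpha F(s))$, and finally the uniform Mittag--Leffler bound to extract the decay rate. The only cosmetic difference is that you evaluate the supremum in the homogeneous case by the scaling substitution $u=t^{\alpha}s$, whereas the paper optimizes $g(s)=s^{\gamma/r}(1+t^{\alpha}s/\Gamma(1+\alpha))^{-1}$ explicitly; both yield the same $t^{-\alpha\gamma(1/p-1/q)}$ rate under the same condition $\frac{1}{\gamma}>\frac{1}{p}-\frac{1}{q}$.
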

\begin{proof}
We first take the group Fourier transform in equation \eqref{HeatTypeEquationG} with respect to the variable $x$ for all $\pi\in\widehat{G}$ and get
\begin{align}\label{fouriere}
\begin{split}
^{C}\partial_{t}^{\alpha}\widehat{w}(t,\pi)+\pi(F(\mathcal{R}))\widehat{w}(t,\pi)&=0, \quad t>0, \\
\widehat{w}(t,\pi)|_{_{_{t=0}}}&=\widehat{w_0}(\pi).
\end{split}
\end{align}
Taking into account the infinitesimal representation \eqref{repr.pr} of $\pi(\mathcal{R})$, the functional calculus allows the latter equation to be seen  componentwise as an infinite system of equations of the form

\begin{equation}\label{diagon}
\begin{split}
^{C}\partial_{t}^{\alpha}\widehat{w}(t,\pi)_{ij}+F(\pi_{i}^{2})\widehat{w}(t,\pi)_{ij}&=0,\quad t>0, \\
\widehat{w}(t,\pi)_{ij}|_{_{_{t=0}}}&=\widehat{w_0}(\pi)_{ij},
\end{split}
\end{equation}
where we are considering any $i,j\in\mathbb{N}$ and any $\pi\in \widehat{G}.$ To solve the system  given by the infinite matrix equation \eqref{diagon}, we decouple the system by fixing an index $(i,j)$. Then each equation given by \eqref{diagon} consists of functions only in the time-variable $t$ and will be treated independently. Thus, for fixed $(i,j)$ we apply the Laplace transform in $t$ and obtain
\[
\begin{split}
u^{\alpha}\widetilde{\widehat{w}}(u,\pi)_{ij}-u^{\alpha-1}\widehat{w_0}(\pi)_{ij}+F(\pi_{i}^{2}) \widetilde{\widehat{w}}(u,\pi)_{ij}&=0, \quad u>0.
\end{split}
\]
Hence
\[
\widetilde{\widehat{w}}(u,\pi)_{ij}=\frac{u^{\alpha-1}}{u^{\alpha}+F(\pi_{i}^{2})}\widetilde{\widehat{w_0}}(\pi)_{ij}, \quad u>0.
\]
An application of the inverse Laplace transform (see e.g.  \cite[Theorem 2.1]{new-mittag-add}) yields
\[
\widehat{w}(t,\pi)_{ij}=E_{\alpha}(-F(\pi_{i}^{2}) t^{\alpha})\widehat{w_0}(\pi)_{ij}\,,
\]
or after a summation over $i,j$,
\[
\widehat{w}(t,\pi)=E_{\alpha}(-\pi(F(\mathcal{R})) t^{\alpha})\widehat{w_0}(\pi)\,,
\]
which in turn implies that
\begin{equation}
    \label{Tr}
    \int_{\widehat{G}}\textnormal{Tr}[\pi(x)\widehat{w}(t,\pi)]\,{\rm d}\mu(\pi)=  \int_{\widehat{G}}\textnormal{Tr}[\pi(x)E_{\alpha}(-\pi(F(\mathcal{R})) t^{\alpha})\widehat{w_0}(\pi)]\,{\rm d}\mu(\pi)\,,
\end{equation}
where the left hand side of \eqref{Tr} equals by \eqref{Four.inv.for} to $w(t,x)$. Hence 
\[
w(t,x)=E_{\alpha}(-(F(\mathcal{R}) t^{\alpha})w_0(x)\,.
\]
Now, by Theorem 5.1 in \cite{RR2020} we have
\begin{align}\label{RR1}
    \|w(t,\cdot)\|_{L^q(G)}&=\|E_{\alpha}(-t^{\alpha}F(\mathcal{R})w_0(\cdot)\|_{L^q(G)} \nonumber \\
    &\lesssim \|E_{\alpha}(-t^{\alpha}F(\mathcal{R}))\|_{L^{r,\infty}(VN_{R}(G))}\|w_0\|_{L^p(G)}\,,
\end{align}
where $r,p,q$ satisfy the relation $\frac{1}{r}=\frac{1}{p}+\frac{1}{q}$, and the Lorentzian norm is given by \cite[Theorem 6.1]{RR2020}
\begin{equation}
    \label{Lor.norm}
  \|E_{\alpha}(-t^{\alpha}F(\mathcal{R}))\|_{L^{r,\infty}(VN_{R}(G))}=\sup_{s>0}  \left[\tau\left( E_{(0,s)}(\mathcal{R}\right)\right]^{\frac{1}{r}}E_{\alpha}(-t^{\alpha}F(s))<+\infty\,,
\end{equation}
and is finite by the assumption \eqref{need} in the hypothesis. 
Here we are using that $E_\alpha(-s)$, $s\geqslant0$, is completely monotonic function \cite{Pollard} and is such that $E_\alpha(0)=1$ and $\displaystyle\lim_{s\to+\infty}E_\alpha(-s)=0$ by the uniform estimate given in \cite[Theorem 4]{Mittag-bounded}. Now, by the assumptions on $F$ , the latter implies that $E_\alpha(-t^{\alpha}F(s))$ is monotonically decreasing for any $s\geqslant0$ , and $\displaystyle\lim_{s\to+\infty}E_\alpha(-t^{\alpha}F(s))=0$. 

Suppose now that the conditions in \eqref{asymtotic-trace} are satisfied. Then, for $F(s)=s$  using the uniform estimate for the Mittag-Leffler function as in \eqref{MLest2}, and the expression \eqref{Lor.norm} we get
\begin{equation}\label{Lor.norm.1}
     \|E_{\alpha}(-t^{\alpha}\mathcal{R})\|_{L^{r,\infty}(VN_{R}(G))}=\sup_{s>0} s^{\frac{\gamma}{r}}\frac{1}{1+\frac{t^{\alpha}s}{\Gamma(1+\alpha)}}\,.
\end{equation}
For $g(s)=s^{\frac{\gamma}{r}}\frac{1}{1+\frac{t^{\alpha}s}{\Gamma(1+\alpha)}}$, using standard analytical arguments, we see that the supremum of $g$ is attained at $s^*=\frac{\gamma \Gamma(1+\alpha)}{r\left(1-\frac{\gamma}{r} \right)}t^{-\alpha}$, and we have $g(s^*)=C_{a,\gamma,p,q}t^{-\frac{\alpha \gamma}{r}}$. Hence we have
$$\|w(t,\cdot)\|_{L^q(G)}\leqslant C_{\alpha,\gamma,p,q}t^{-\alpha\gamma\left(\frac{1}{p}-\frac{1}{q}\right)}\|w_0\|_{L^p(G)}.$$ 
The proof of Theorem \ref{Main-heat} is then complete. 
\end{proof}

\subsubsection{Examples}\label{examples} Below we present  several examples where the trace of the spectral projections is already known. In these cases Theorem \ref{Main-heat} can be applied, and one can check that time decay obtained is optimal, if comparing the latter with existing results.  

\
\begin{ex}{The Euclidean space $\mathbb{R}^n$.}
We consider the heat type equation 
\begin{equation}\label{rn-e}
\begin{split}
^{C}\partial_{t}^{\alpha}w(t,x)-\Delta_{\mathbb{R}^n} w(t,x)&=0, \quad t>0,\,\, x\in \mathbb{R}^n, \,\, 0<\alpha<1, \\
w(t,x)|_{_{_{t=0}}}&=w_0(x),\quad w_0\in L^p(\mathbb{R}^n),\quad 1<p\leqslant 2,
\end{split}
\end{equation}
where $\Delta_{\mathbb{R}^n}=\sum_{i=1}^n \partial_{x_i}^2$ is the classical Laplacian operator on $\mathbb{R}^n$. It is already known that the trace of the spectral projections $E_{(0,s)}(\Delta_{\mathbb{R}^n})$ has the following asymptotic behavior \cite{RR18}:
\[
\tau\big(E_{(0,s)}(\Delta_{\mathbb{R}^n})\big)\lesssim s^{n/2},\quad s\to+\infty.
\]
Hence,  Theorem \ref{Main-heat} is applicable, and we get the following {\it sharp time decay rate} for the solution of equation \eqref{rn-e}:\[
\|w(\cdot,t)\|_{L^q(\mathbb{R}^n)}\leqslant C_{\alpha,n,p,q}t^{-\frac{\alpha n}{2}\left(\frac{1}{p}-\frac{1}{q}\right)} \|w_0\|_{L^p(\mathbb{R}^n)},  
\]
for all $1<p\leqslant 2\leqslant q<+\infty$ with $\frac{2}{n}>\frac{1}{p}-\frac{1}{q}$. Indeed, this decay is sharp since it coincides with the result given in \cite[Theorem 3.3, item (i)]{uno2}.
\end{ex}
Below we would like to mention a particular analogous result which can be obtained in a compact Lie group for heat type equations since in this section we are extended the results of \cite[Section 3]{WRR} to a graded Lie group.  
\begin{ex} Compact Lie groups. We now study the following heat type equation by using the sub-Laplacian $\Delta_{sub}$ on a compact Lie group $G$: 
\begin{align*}
\begin{split}
^{C}\partial_{t}^{\alpha}w(t,x)-\Delta_{sub}w(t,x)&=0, \quad t>0,\,\, x\in G, \,\, 0<\alpha\leqslant 1, \\
w(t,x)|_{_{_{t=0}}}&=w_0(x),\quad w_0\in L^p(G),\quad 1<p\leqslant 2.
\end{split}
\end{align*}
By \cite{[35]}, we have that the trace of the spectral projections $E_{(0,s)}(-\Delta_{sub})$ has the following asymptotic behavior:
\[
\tau\big(E_{(0,s)}(-\Delta_{sub})\big)\lesssim s^{Q/2},\quad s\to+\infty,
\]
where $Q$ is the Hausdorff dimension of $G$ with respect to the control distance generated by the sub-Laplacian. So, by \cite[Theorem 4]{WRR} (see also \cite[Example 1]{WRR}), we have the existence, uniqueness, the form, and the asymptotic behavior for the solution $w(t,x)$ as follows:
\[
\|w(t,\cdot)\|_{L^q(G)}\leqslant C_{\alpha,Q,p,q}t^{-\alpha Q/2\left(\frac{1}{p}-\frac{1}{q}\right)}\|w_0\|_{L^p(G)},\quad 2\leqslant q<+\infty,\quad \frac{2}{Q}>\frac{1}{p}-\frac{1}{q}. 
\]
 \end{ex}
 
More generally, we have the following result: 
\begin{ex}{Graded Lie groups.}\label{ex.graded}
Let us solve the following heat type equation by using any positive Rockland operator $\mathcal{R}$ of homogeneous degree $\nu$ on a graded Lie group $G$ of homogeneous dimension $Q$: 
\begin{align*}
\begin{split}
^{C}\partial_{t}^{\alpha}w(t,x)-\mathcal{R}w(t,x)&=0, \quad t>0,\,\, x\in G, \,\, 0<\alpha\leqslant 1, \\
w(t,x)|_{_{_{t=0}}}&=w_0(x),\quad w_0\in L^p(G),\quad 1<p\leqslant 2.
\end{split}
\end{align*}
By \cite{RR18}, we have that the trace of the spectral projections $E_{(0,s)}(\mathcal{R})$ has the following asymptotic behavior:
\[
\tau\big(E_{(0,s)}(\mathcal{R})\big)\lesssim s^{Q/\nu},\quad s\to+\infty.
\]
 So, by Theorem \ref{Main-heat} we have the existence, uniqueness, the form, and the asymptotic behavior for the solution $w(t,x)$ as follows:
\[
\|w(t,\cdot)\|_{L^q(G)}\leqslant C_{\alpha,Q,p,q}t^{-\alpha Q/\nu\left(\frac{1}{p}-\frac{1}{q}\right)}\|w_0\|_{L^p(G)},\quad 2\leqslant q<+\infty,\quad \frac{2}{Q}>\frac{1}{p}-\frac{1}{q}. 
\]
 Note that the sharpness of this decay for $\alpha=1$ is discussed in \cite{RR18}.

\end{ex}
 

A similar result can be also established on the Heisenberg group, see  \cite{RR18} or \cite[Section 7.3]{RR2020}.  

\subsection{Multi-term $\mathcal{R}$-heat type equations}\label{multi-section}

In this subsection we treat the following equation:  
\begin{equation}\label{Multi-HeatTypeEquationG}
	\left\{ \begin{aligned}
		\prescript{C}{}\partial_{t}^{\alpha_0}w(t,x)+\gamma_1\prescript{C}{}\partial_{t}^{\alpha_1}w(t,x)+\cdots+\gamma_m\prescript{C}{}\partial_{t}^{\alpha_m}w(t,x)+\mathcal{R}w(t,x)&=0,\,\,  \\
		w(t,x)|_{_{_{t=0}}}&=w_0(x),
	\end{aligned}
	\right.
\end{equation}
for $0<t\leqslant T<+\infty$ and $x\in G$, where $\mathcal{R}$ is a positive Rockland operator of homogeneous degree $\nu$ on $G$, $\gamma_i>0$ $(i=1,\ldots,m)$ and $0<\alpha_m<\alpha_{m-1}<\cdots<\alpha_1<\alpha_0\leqslant1$. 

\begin{thm}\label{multi-heat-thm}
Let $G$ be a graded Lie group, $\mathcal{R}$ be a positive Rockland operator on $G$ of homogeneous degree $\nu$ and let $0<\alpha_m<\alpha_{m-1}<\cdots<\alpha_1<\alpha_0\leqslant1$. Then there exists a unique continuous solution to equation \eqref{Multi-HeatTypeEquationG} given by
\begin{equation*}
w(t,x)=\sum_{k=0}^{m}t^{\alpha_0-\alpha_k}E_{(\alpha_0-\alpha_1,\ldots,\alpha_0-\alpha_m,\alpha_0),\alpha_0-\alpha_k+1}(-\gamma_1 t^{\alpha_0-\alpha_1},\ldots,-\gamma_m t^{\alpha_0-\alpha_m},-t^{\alpha_0}\mathcal{R}) w_0(x),\label{multi-solution}
		\end{equation*}
for any $0<t\leqslant T$ and $x\in G.$  
 Moreover we have the following Sobolev norm estimates:
 \begin{enumerate}
     \item For any $s\in\mathbb{R}$ and $w_0\in L^{2}_{s}(G)$ we have 
     \[
     \|w(t,\cdot)\|_{L^{2}_{s}(G)}\leqslant \displaystyle C_{T,\vec{\alpha},\vec{\gamma}}\left(\sum_{k=0}^{m}\gamma_k t^{\alpha_0-\alpha_k}\right)\|w_0\|_{L^{2}_{s}(G)},\quad 0<t\leqslant T.
     \]
     \item For any $s\geqslant \nu,$ and $w_0\in L^{2}_{s-\nu}(G)$ we have 
     \[
\|w(t,\cdot)\|_{L^{2}_{s}(G)}\leqslant \displaystyle C_{T,\vec{\alpha},\vec{\gamma}}\left(\sum_{k=0}^{m}\gamma_k t^{\alpha_0-\alpha_k}\right)(1+t^{-\alpha_0})\|w_0\|_{L^{2}_{s-\nu}(G)},\quad 0<t\leqslant T.
\]
 \end{enumerate}
\end{thm}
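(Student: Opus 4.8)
The plan is to mirror the proof of Theorem~\ref{THM:graded,wtyp}, reducing the group equation to an infinite family of scalar multi-term fractional ODEs via the group Fourier transform, and then to import the multivariate-Mittag--Leffler analysis already carried out in Theorem~\ref{multi-thm}. First I would apply the group Fourier transform in $x$ to \eqref{Multi-HeatTypeEquationG}; using the matrix representation \eqref{repr.pr} of $\pi(\mathcal{R})$ this decouples the problem into the system
\[
\prescript{C}{}\partial_t^{\alpha_0}\widehat{w}(t,\pi)_{i,j}+\gamma_1\prescript{C}{}\partial_t^{\alpha_1}\widehat{w}(t,\pi)_{i,j}+\cdots+\gamma_m\prescript{C}{}\partial_t^{\alpha_m}\widehat{w}(t,\pi)_{i,j}+\pi_i^2\,\widehat{w}(t,\pi)_{i,j}=0,
\]
with the single datum $\widehat{w}(0,\pi)_{i,j}=\widehat{w}_0(\pi)_{i,j}$ (only one initial condition, as $\alpha_0\leqslant 1$). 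Fixing $(i,j)$ and $\pi$ and applying the Laplace transform in $t$ gives
\[
\widetilde{\widehat{w}}(s,\pi)_{i,j}=\frac{s^{\alpha_0-1}+\gamma_1 s^{\alpha_1-1}+\cdots+\gamma_m s^{\alpha_m-1}}{s^{\alpha_0}+\gamma_1 s^{\alpha_1}+\cdots+\gamma_m s^{\alpha_m}+\pi_i^2}\,\widehat{w}_0(\pi)_{i,j},
\]
and the inverse Laplace transform of \cite[Theorem~2.3]{laplace-inverse} yields the scalar analogue of the claimed representation (with the convention $\gamma_0=1$, $\alpha_0=\alpha_0$), $\pi_i^2$ playing the role of $\mathcal{R}$. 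Summing over $i,j$ and invoking the Fourier inversion formula \eqref{Four.inv.for}, via the same passage $\pi_i^2\mapsto\mathcal{R}$ used to establish \ref{itm:1est}, produces the stated operator form of $w(t,x)$.

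For the norm estimates I would pass to the Hilbert--Schmidt coefficients. The essential analytic input is the uniform bound of \cite[Lemma~3.3]{karel} (already exploited in Theorem~\ref{multi-thm}), which gives
\[
|\widehat{w}(t,\pi)_{i,j}|\leqslant C_{T,\vec{\alpha},\vec{\gamma}}\,\frac{1}{1+\pi_i^2 t^{\alpha_0}}\left(\sum_{k=0}^m\gamma_k t^{\alpha_0-\alpha_k}\right)|\widehat{w}_0(\pi)_{i,j}|,\qquad 0<t\leqslant T.
\]
Multiplying by the Sobolev weight $(1+\pi_i^2)^{s/\nu}$, summing over $i,j$ and integrating over $\widehat{G}$ against the Plancherel measure, exactly as in the step leading to \eqref{withpi}, then delivers both inequalities. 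For item~(1) one uses $\tfrac{1}{1+\pi_i^2 t^{\alpha_0}}\leqslant 1$, so the weight $(1+\pi_i^2)^{s/\nu}$ is matched directly by $\|w_0\|_{L^{2}_{s}(G)}$. For item~(2), where the datum sits in $L^{2}_{s-\nu}(G)$, I would split $(1+\pi_i^2)^{s/\nu}=(1+\pi_i^2)^{(s-\nu)/\nu}(1+\pi_i^2)$ and control the remaining factor by the elementary bound
\[
\frac{1+\pi_i^2}{1+\pi_i^2 t^{\alpha_0}}\leqslant 1+t^{-\alpha_0},
\]
which is precisely the splitting used for \eqref{heat-3}; this produces the factor $(1+t^{-\alpha_0})$ together with the shift by the homogeneous degree $\nu$. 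Uniqueness is immediate from the injectivity of the group Fourier transform, since each decoupled scalar Cauchy problem has a unique solution.

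These steps are individually routine once the scalar multi-term analysis of Theorem~\ref{multi-thm} is available; the only genuine obstacle is the uniform Mittag--Leffler estimate of \cite[Lemma~3.3]{karel}, which must hold uniformly in the spectral parameter $\pi_i^2\in(0,\infty)$ and only locally uniformly in $t$, and this is exactly why the result is confined to finite horizons $0<t\leqslant T$. A secondary point requiring care is the justification of the termwise summation over $i,j$ and of the continuity in $t$ of the resulting series; as in the proof of Theorem~\ref{THM:graded,wtyp}, both follow from the $\tfrac{1}{1+\pi_i^2 t^{\alpha_0}}$ decay combined with the Plancherel identity, which guarantees convergence of the Hilbert--Schmidt norms uniformly on compact time intervals.
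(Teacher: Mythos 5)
Your proposal follows essentially the same route as the paper's own proof: the group Fourier transform decouples \eqref{Multi-HeatTypeEquationG} into scalar multi-term fractional ODEs via \eqref{repr.pr}, the Laplace transform and its inversion (\cite[Theorem 2.3]{laplace-inverse}) give the multivariate Mittag--Leffler representation with $\pi_i^2$ in place of $\mathcal{R}$, and the uniform bound of \cite[Lemma 3.3]{karel} combined with the Plancherel formula \eqref{plan.gr} yields both Sobolev estimates. The only cosmetic difference is your splitting $(1+\pi_i^2)^{s/\nu}=(1+\pi_i^2)^{(s-\nu)/\nu}(1+\pi_i^2)$ with the bound $\frac{1+\pi_i^2}{1+\pi_i^2 t^{\alpha_0}}\leqslant 1+t^{-\alpha_0}$ in item (2), where the paper uses the equivalent estimate $\frac{(1+\pi_i^2)^{s/\nu}}{1+\pi_i^2 t^{\alpha_0}}\lesssim 1+\pi_i^{2(\frac{s}{\nu}-1)}t^{-\alpha_0}$.
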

\begin{proof}
By applying the group Fourier transform to the equation \eqref{Multi-HeatTypeEquationG} we obtain
	 \begin{align*}
		\prescript{C}{}\partial_{t}^{\alpha_0}\widehat{w}(t,\pi)+\gamma_1\prescript{C}{}\partial_{t}^{\alpha_1}\widehat{w}(t,\pi)+\cdots+\gamma_m\prescript{C}{}\partial_{t}^{\alpha_m}\widehat{w}(t,\pi)+\pi(\mathcal{R})\widehat{w}(t,\pi)&=0,\quad \pi\in\widehat{G}, \\
		\widehat{w}(t,\pi)|_{_{_{t=0}}}&=\widehat{w_0}(\pi).
	\end{align*}
Hence
\begin{align*}
\prescript{C}{}\partial_{t}^{\alpha_0}\widehat{w}(t,\pi)_{i,j}+\gamma_1\prescript{C}{}\partial_{t}^{\alpha_1}\widehat{w}(t,\pi)_{i,j}+\cdots+\gamma_m\prescript{C}{}\partial_{t}^{\alpha_m}\widehat{w}(t,\pi)_{i,j}+\pi^2_i\widehat{w}(t,\pi)_{i,j}&=0, \\
		\widehat{w}(t,\pi)_{i,j}|_{_{_{t=0}}}&=\widehat{w_0}(\pi)_{i,j},
	\end{align*}
	for any $(i,j) \in \mathbb{N} \times \mathbb{N}$ and $\pi \in \widehat{G}$. Thus by the Laplace transform we have 
\begin{align*}	s^{\alpha_0}\widetilde{\widehat{w}}(s,\pi)_{i,j}-s^{\alpha_0-1}\widehat{w_0}(\pi)_{i,j}+\gamma_1 s^{\alpha_1}\widetilde{\widehat{w}}(s,\pi)_{i,j}-\gamma_1 s^{\alpha_1-1}\widehat{w_0}(\pi)_{i,j}+&\cdots \\
	&\hspace{-9cm}\cdots+\gamma_m s^{\alpha_m}\widetilde{\widehat{w}}(s,\pi)_{i,j}-\gamma_m s^{\alpha_m-1}\widehat{w_0}(\pi)_{i,j}+\pi_{i}^2\widetilde{\widehat{w}}(s,\pi)_{i,j}=0, \quad s>0, \\
\widehat{w}(t,\pi)_{i,j}|_{_{_{t=0}}}&=\widehat{w_0}(\pi)_{i,j}.
	\end{align*}
	The latter implies that
	\[
	\widetilde{\widehat{w}}(s,\pi)_{i,j}=\frac{s^{\alpha_0-1}+\gamma_1 s^{\alpha_1-1}+\cdots+\gamma_m s^{\alpha_m-1}}{s^{\alpha_0}+\gamma_1 s^{\alpha_1}+\cdots+\gamma_m s^{\alpha_m}+\pi_{i}^2}\widehat{w_0}(\pi)_{i,j}, \quad s>0, 
	\]
	and by the inverse Laplace transform we obtain
	\begin{align*}
		\widehat{w}(t,\pi)_{i,j}=\sum_{k=0}^{m}\gamma_k t^{\alpha_0-\alpha_k}&E_{(\alpha_0-\alpha_1,\ldots,\alpha_0-\alpha_m,\alpha_0),\alpha_0-\alpha_k+1}(-\gamma_1 t^{\alpha_0-\alpha_1},\ldots \\
		&\hspace{3cm}\ldots,-\gamma_m t^{\alpha_0-\alpha_m},-\pi_{i}^2 t^{\alpha_0})\widehat{w_0}(\pi)_{i,j},
	\end{align*}
	with $\gamma_0=1.$ Arguing as before, we see that the solution to the Cauchy problem \eqref{multi-heat} is given by 
 \begin{equation*}
w(t,x)=\sum_{k=0}^{m}t^{\alpha_0-\alpha_k}E_{(\alpha_0-\alpha_1,\ldots,\alpha_0-\alpha_m,\alpha_0),\alpha_0-\alpha_k+1}(-\gamma_1 t^{\alpha_0-\alpha_1},\ldots,-\gamma_m t^{\alpha_0-\alpha_m},-t^{\alpha_0}\mathcal{R}) w_0(x)\label{multi-solution}.
		\end{equation*}
 On the other hand, by the estimates in \cite[Lemma 3.3]{karel} (\cite[Lemma 3.2]{multi-estimate} or \cite[Lemma 3]{otra-estimacion}) we have
	\begin{align*}
		(1+\pi_{i}^2)^{s/\nu}&|\widehat{w}(t,\pi)_{i,j}|\leqslant \sum_{k=0}^{m}\gamma_k t^{\alpha_0-\alpha_k}|E_{(\alpha_0-\alpha_1,\ldots,\alpha_0-\alpha_m,\alpha_0),\alpha_0-\alpha_k+1}(-\gamma_1 t^{\alpha_0-\alpha_1},\ldots \nonumber\\
		&\hspace{3cm}\ldots,-\gamma_m t^{\alpha_0-\alpha_m},-\pi_{i}^2 t^{\alpha_0})|(1+\pi_{i}^2)^{s/\nu}|\widehat{w_0}(\pi)_{i,j}| \nonumber\\
		&\leqslant C_{T,\vec{\alpha},\vec{\gamma}}\left(\sum_{k=0}^{m}\gamma_k t^{\alpha_0-\alpha_k}\right)\frac{(1+\pi_{i}^2)^{s/\nu}}{1+\pi_{i}^2 t^{\alpha_0}}|\widehat{w_0}(\pi)_{i,j}| \\
  &\leqslant
\left\{
\begin{array}{rccl}
&\displaystyle C_{T,\vec{\alpha},\vec{\gamma}}\left(\sum_{k=0}^{m}\gamma_k t^{\alpha_0-\alpha_k}\right)(1+\pi_{i}^2)^{s/\nu}|\widehat{w_0}(\pi)_{i,j}|,\quad s\in\mathbb{R},\nonumber \\
&\displaystyle C_{T,\vec{\alpha},\vec{\gamma}}\left(\sum_{k=0}^{m}\gamma_k t^{\alpha_0-\alpha_k}\right)\big(1+\pi_{i}^{2\left(\frac{s}{\nu}-1\right)}t^{-\alpha_0}\big)|\widehat{w_0}(\pi)_{i,j}|,\quad s\geqslant \nu,\nonumber \\
\end{array}
\right.
	\end{align*}
which implies by the Plancherel formula that
\begin{align*}
\|w(t,\cdot)\|_{L^{2}_{s}(G)}&\leqslant  \left\{
\begin{array}{rccl}
&\displaystyle C_{T,\vec{\alpha},\vec{\gamma}}\left(\sum_{k=0}^{m}\gamma_k t^{\alpha_0-\alpha_k}\right)\|w_0\|_{L^{2}_{s}(G)},\,\, s\in\mathbb{R}, \\
&\displaystyle C_{T,\vec{\alpha},\vec{\gamma}}\left(\sum_{k=0}^{m}\gamma_k t^{\alpha_0-\alpha_k}\right)(1+t^{-\alpha_0})\|w_0\|_{L^{2}_{s-\nu}(G)}, \,\, s\geqslant \nu,
\end{array}
\right.
\end{align*}
which completes the proof.
\end{proof}

\subsection{Multi-term $\mathcal{R}$-wave type equations}

Consider the following equation:  
\begin{equation}\label{Multi-WeatTypeEquationG}
	\left\{ \begin{aligned}
		\prescript{C}{}\partial_{t}^{\alpha_0}w(t,x)+\gamma_1\prescript{C}{}\partial_{t}^{\alpha_1}w(t,x)+\cdots+\gamma_m\prescript{C}{}\partial_{t}^{\alpha_m}w(t,x)+\mathcal{R}w(t,x)&=0,\,\,  \\
		w(t,x)|_{_{_{t=0}}}&=w_0(x), \\
  \partial_t w(t,x)|_{_{_{t=0}}}&=w_1(x)\,,
	\end{aligned}
	\right.
\end{equation}
for $0<t\leqslant T<+\infty$ and $x\in G$, where $\mathcal{R}$ is a positive Rockland operator of homogeneous degree $\nu$ on $G$, $w_0,w_1$ will be chosen in some suitable Sobolev spaces, $\gamma_i>0$ $(i=1,\ldots,m)$ and $0<\alpha_m<\alpha_{m-1}<\cdots<\alpha_1<\alpha_0$ where $1<\alpha_0<2$.

\begin{thm}\label{thm.multi-wave}
Let $G$ be a graded Lie group, $\mathcal{R}$ be a positive Rockland operator on $G$ of homogeneous degree $\nu$,  $0<\alpha_m<\alpha_{m-1}<\cdots<\alpha_1<\alpha_0$ and $1<\alpha_0<2$. Then there exists a unique continuous solution to equation \eqref{Multi-WeatTypeEquationG} given by
\begin{align*}
&w(t,x)=\sum_{k=0}^{m}t^{\alpha_0-\alpha_k}E_{(\alpha_0-\alpha_1,\ldots,\alpha_0-\alpha_m,\alpha_0),\alpha_0-\alpha_k+1}(-\gamma_1 t^{\alpha_0-\alpha_1},\ldots,-\gamma_m t^{\alpha_0-\alpha_m},-t^{\alpha_0}\mathcal{R}) w_0(x) \\
&+\sum_{k=0}^{m}t^{\alpha_0-\alpha_k+1}E_{(\alpha_0-\alpha_1,\ldots,\alpha_0-\alpha_m,\alpha_0),\alpha_0-\alpha_k+2}(-\gamma_1 t^{\alpha_0-\alpha_1},\ldots,-\gamma_m t^{\alpha_0-\alpha_m},-t^{\alpha_0}\mathcal{R})w_1(x),
\end{align*}
for any $0<t\leqslant T$ and $x\in G.$  
Moreover, we have the following Sobolev norm estimates:
\begin{enumerate}
    \item For any $s\in\mathbb{R}$ and $w_0,w_1\in L^{2}_{s}(G)$ we have 
     \[
     \|w(t,\cdot)\|_{L^{2}_{s}(G)}\leqslant C_{T,\vec{\alpha},\vec{\gamma}}\left(\sum_{k=0}^{m}\gamma_k t^{\alpha_0-\alpha_k}\right)\left(\|w_0\|_{L^{2}_{s}(G)}+t\|w_1\|_{L^{2}_{s}(G)}\right),\quad 0<t\leqslant T.
     \]
     \item For any $s\geqslant \frac{\nu}{\alpha_0}$ and for $(w_0,w_1)\in L^{2}_{s}(G)\times L^{2}_{s-\nu/\alpha_0}(G)$ we have
     \[
    \|w(t,\cdot)\|_{L^{2}_{s}(G)}\leqslant C_{T,\vec{\alpha},\vec{\gamma}}\left(\sum_{k=0}^{m}\gamma_k t^{\alpha_0-\alpha_k}\right)\left(\|w_0\|_{L^{2}_{s}(G)}+(1+t)\|w_1\|_{L^{2}_{s-\nu/\alpha_0}(G)}\right),\quad 0<t\leqslant T.
     \]
     \item For any $s \geqslant \nu$ and for $w_0,w_1 \in L^{2}_{s-\nu}(G)$ we have
     \[
       \|w(t,\cdot)\|_{L^{2}_{s}(G)}\leqslant C_{T,\vec{\alpha},\vec{\gamma}}\left(\sum_{k=0}^{m}\gamma_k t^{\alpha_0-\alpha_k}\right)(1+t^{-\alpha_0})\left(\|w_0\|_{L^{2}_{s-\nu}(G)}+t\|w_1\|_{L^{2}_{s-\nu}(G)}\right)\quad 0<t\leqslant T.
     \]
\end{enumerate}

\end{thm}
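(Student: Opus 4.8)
The plan is to combine the group Fourier reduction used in Theorems \ref{THM:graded,wtyp} and \ref{multi-heat-thm} with the multi-term time analysis carried out in Theorem \ref{multi-thm-wave}. First I would apply the group Fourier transform in $x$ to \eqref{Multi-WeatTypeEquationG}; using the diagonal matrix representation \eqref{repr.pr} of $\pi(\mathcal{R})$ (with entries $\pi_i^2$), the operator-valued equation decouples, for each fixed $(i,j)$ and $\pi\in\widehat{G}$, into the scalar multi-term fractional ODE
\[
\prescript{C}{}\partial_{t}^{\alpha_0}\widehat{w}(t,\pi)_{i,j}+\sum_{k=1}^m \gamma_k \prescript{C}{}\partial_{t}^{\alpha_k}\widehat{w}(t,\pi)_{i,j}+\pi_i^2\,\widehat{w}(t,\pi)_{i,j}=0,
\]
with initial data $\widehat{w}_0(\pi)_{i,j}$ and $\widehat{w}_1(\pi)_{i,j}$.

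Second, I would apply the Laplace transform in $t$. Since $1<\alpha_0<2$, each Caputo derivative of order exceeding one contributes both initial conditions, exactly as in Theorem \ref{multi-thm-wave} where the same ``without loss of generality $\alpha_i>1$'' reduction is invoked; this yields
\[
\widetilde{\widehat{w}}(s,\pi)_{i,j}=\frac{\sum_{k=0}^m\gamma_k s^{\alpha_k-1}}{\sum_{k=0}^m\gamma_k s^{\alpha_k}+\pi_i^2}\,\widehat{w}_0(\pi)_{i,j}+\frac{\sum_{k=0}^m\gamma_k s^{\alpha_k-2}}{\sum_{k=0}^m\gamma_k s^{\alpha_k}+\pi_i^2}\,\widehat{w}_1(\pi)_{i,j},
\]
with $\gamma_0=1$. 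Inverting via \cite[Theorem 2.3]{laplace-inverse} produces precisely the two multivariate Mittag--Leffler sums appearing in the statement, with $-\pi_i^2 t^{\alpha_0}$ in the last slot. Summing over $i,j$ and applying the Fourier inversion formula \eqref{Four.inv.for} then gives the stated explicit representation of $w(t,x)$, where $-t^{\alpha_0}\mathcal{R}$ replaces $-\pi_i^2 t^{\alpha_0}$ through the functional calculus for $\mathcal{R}$, and entireness of the Mittag--Leffler functions yields continuity in $t$.

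Third, for the Sobolev estimates I would invoke the uniform bound \cite[Lemma 3.3]{karel} on the multivariate Mittag--Leffler functions, which, exactly as in \eqref{multi-estimate-wave}, gives
\[
|\widehat{w}(t,\pi)_{i,j}|\leqslant \frac{C_{T,\vec\alpha,\vec\gamma}}{1+\pi_i^2 t^{\alpha_0}}\left(|\widehat{w}_0(\pi)_{i,j}|\sum_{k=0}^m\gamma_k t^{\alpha_0-\alpha_k}+|\widehat{w}_1(\pi)_{i,j}|\sum_{k=0}^m\gamma_k t^{\alpha_0-\alpha_k+1}\right),
\]
uniformly for $0<t\leqslant T$ and all $\pi,i,j$. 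Multiplying by $(1+\pi_i^2)^{s/\nu}$ and estimating the ratio $\tfrac{(1+\pi_i^2)^{s/\nu}}{1+\pi_i^2 t^{\alpha_0}}$ in three ways --- crudely by $(1+\pi_i^2)^{s/\nu}$ for the first estimate; by trading one power of $t$ against the supremum of $\tfrac{t}{1+\pi_i^2 t^{\alpha_0}}$ to gain the Sobolev shift $\nu/\alpha_0$, as in Theorem \ref{THM:graded,wtyp}\ref{itm:4est}; and by $(1+t^{-\alpha_0})(1+\pi_i^2)^{(s-\nu)/\nu}$ for the shift by $\nu$ --- and then integrating over $\widehat{G}$ against the Plancherel measure \eqref{plan.gr}, yields the three claimed estimates.

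The main obstacle I anticipate is bookkeeping rather than conceptual: one must ensure the inverse Laplace computation and the bound \cite[Lemma 3.3]{karel} are applied with the correct parameters when some $\alpha_k<1$, so that the second-initial-condition terms carrying the symbols $s^{\alpha_k-2}$ are treated consistently. This delicacy, together with the fact that the Mittag--Leffler bound is uniform only on compact time intervals, is exactly what forces the restriction $0<t\leqslant T$ in the estimates and hence in the assertion of boundedness of the solution.
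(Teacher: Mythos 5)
Your proposal follows essentially the same route as the paper's own proof: group Fourier transform plus the diagonal representation \eqref{repr.pr} to decouple into scalar multi-term fractional ODEs, Laplace transform and inversion via \cite[Theorem 2.3]{laplace-inverse} to obtain the multivariate Mittag--Leffler representation, the uniform bound of \cite[Lemma 3.3]{karel} on $(0,T]$, and the same three-way estimate of the ratio $(1+\pi_i^2)^{s/\nu}/(1+\pi_i^2 t^{\alpha_0})$ (crude bound, supremum of $t/(1+\pi_i^2 t^{\alpha_0})$ for the $\nu/\alpha_0$ shift, and $(1+t^{-\alpha_0})$ for the $\nu$ shift) followed by Plancherel. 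The steps, key lemmas, and even the handling of the initial-condition terms under the ``without loss of generality $\alpha_i>1$'' reduction match the paper's argument.
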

\begin{proof}
An application of the group Fourier transform to the equation \eqref{Multi-WeatTypeEquationG} gives 
By applying the group Fourier transform to the equation \eqref{Multi-HeatTypeEquationG} we obtain
\begin{equation*}
\left\{ \begin{split}
\prescript{C}{}\partial_{t}^{\alpha_0}\widehat{w}(t,\pi)+\gamma_1\prescript{C}{}\partial_{t}^{\alpha_1}\widehat{w}(t,\pi)+\cdots+\gamma_m\prescript{C}{}\partial_{t}^{\alpha_m}\widehat{w}(t,\pi)+\pi(\mathcal{R})\widehat{w}(t,\pi)&=0,\quad \pi\in\widehat{G}, \\
\widehat{w}(t,\pi)|_{_{_{t=0}}}&=\widehat{w_0}(\pi) \\
\partial_t \widehat{w}(t,\pi)|_{_{_{t=0}}}&=\widehat{w_1}(\pi)\,,
\end{split}
\right.
\end{equation*}
which in turn implies
\begin{equation*}
\left\{ \begin{split}
\prescript{C}{}\partial_{t}^{\alpha_0}\widehat{w}(t,\pi)_{i,j}+\gamma_1\prescript{C}{}\partial_{t}^{\alpha_1}\widehat{w}(t,\pi)_{i,j}+\cdots+\gamma_m\prescript{C}{}\partial_{t}^{\alpha_m}\widehat{w}(t,\pi)_{i,j}+\pi(\mathcal{R})\widehat{w}(t,\pi)_{i,j}&=0, \\
\widehat{w}(t,\pi)_{i,j}|_{_{_{t=0}}}&=\widehat{w_0}(\pi)_{i,j} \\
 \partial_t \widehat{w}(t,\pi)_{i,j}|_{_{_{t=0}}}&=\widehat{w_1}(\pi)_{i,j}\,,
\end{split}
\right.
\end{equation*}
for all $(i,j)$ and for all $\pi \in \widehat{G}$. An application of the Laplace Fourier transform in $t$ gives 
\[
	\widetilde{\widehat{w}}(s,\pi)_{i,j}=\frac{s^{\alpha_0-1}+\gamma_1 s^{\alpha_1-1}+\cdots+\gamma_m s^{\alpha_m-1}}{s^{\alpha_0}+\gamma_1 s^{\alpha_1}+\cdots+\gamma_m s^{\alpha_m}+\pi_{i}^2}\left[\widehat{w_0}(\pi)_{i,j}+s^{-1}\widehat{w_1}(\pi)_{i,j}\right], \quad s>0\,.
	\]
 Thus applying the inverse Laplace transform and arguing as we did in Theorem \ref{multi-heat-thm}, we see that the solution to the Cauchy problem \eqref{Multi-WeatTypeEquationG} is given explicitly by 
 \begin{align*}
&w(t,x)=\sum_{k=0}^{m}t^{\alpha_0-\alpha_k}E_{(\alpha_0-\alpha_1,\ldots,\alpha_0-\alpha_m,\alpha_0),\alpha_0-\alpha_k+1}(-\gamma_1 t^{\alpha_0-\alpha_1},\ldots,-\gamma_m t^{\alpha_0-\alpha_m},-t^{\alpha_0}\mathcal{R}) w_0(x) \\
&+\sum_{k=0}^{m}t^{\alpha_0-\alpha_k+1}E_{(\alpha_0-\alpha_1,\ldots,\alpha_0-\alpha_m,\alpha_0),\alpha_0-\alpha_k+2}(-\gamma_1 t^{\alpha_0-\alpha_1},\ldots,-\gamma_m t^{\alpha_0-\alpha_m},-t^{\alpha_0}\mathcal{R})w_1(x)\,.
\end{align*}
Moreover, estimating as above we get 
\begin{align*}
		&(1+\pi_{i}^2)^{s/\nu}|\widehat{w}(t,\pi)_{i,j}|\leqslant  C_{T,\vec{\alpha},\vec{\gamma}}\left(\sum_{k=0}^{m}\gamma_k t^{\alpha_0-\alpha_k}\right)\frac{(1+\pi_{i}^2)^{s/\nu}}{1+\pi_{i}^2 t^{\alpha_0}}\left[|\widehat{w_0}(\pi)_{i,j}|+t|\widehat{w_1}(\pi)_{i,j}|\right] \\
  &\leqslant
\left\{
\begin{array}{rccl}
&\displaystyle C_{T,\vec{\alpha},\vec{\gamma}}\left(\sum_{k=0}^{m}\gamma_k t^{\alpha_0-\alpha_k}\right)(1+\pi_{i}^2)^{s/\nu}\left[|\widehat{w_0}(\pi)_{i,j}|+t|\widehat{w_1}(\pi)_{i,j}|\right],\,\, s\in\mathbb{R},\nonumber \\
    & \displaystyle C_{T,\vec{\alpha},\vec{\gamma}}\left(\sum_{k=0}^{m}\gamma_k t^{\alpha_0-\alpha_k}\right)(1+\pi_{i}^2)^{s/\nu} |\widehat{w_0}(\pi)_{i,j}|+\big(t+\pi_{i}^{2\left(\frac{s}{\nu}-\frac{1}{\alpha_0} \right)}\big)|\widehat{w_1}(\pi)_{i,j}|,\,\, s\geqslant \frac{\nu}{\alpha_0}, \nonumber \\
&\displaystyle C_{T,\vec{\alpha},\vec{\gamma}}\left(\sum_{k=0}^{m}\gamma_k t^{\alpha_0-\alpha_k}\right)\big(1+\pi_{i}^{2\left(\frac{s}{\nu}-1\right)}t^{-\alpha_0}\big)\left[|\widehat{w_0}(\pi)_{i,j}|+t|\widehat{w_1}(\pi)_{i,j}|\right] ,\,\, s\geqslant \nu,\nonumber\, \\
\end{array}
\right.
	\end{align*}
 where for the second term of the second estimate we have considered the supremum of the function $h(t)=\frac{t}{1+\pi_{i}^{2}t^{\alpha_0}}$.
Summation of the above over $(i,j)$ and integration over $\widehat{G}$, followed by an application of the Plancherel fomrula \eqref{plan.gr} yields
\begin{align*}
\|w(t,\cdot)\|_{L^{2}_{s}(G)}&\lesssim  
\left\{
\begin{array}{rccl}
&\displaystyle C_{T,\vec{\alpha},\vec{\gamma}}\left(\sum_{k=0}^{m}\gamma_k t^{\alpha_0-\alpha_k}\right)\left(\|w_0\|_{L^{2}_{s}(G)}+t\|w_1\|_{L^{2}_{s}(G)}\right),\quad s\in\mathbb{R}, \\
&\displaystyle C_{T,\vec{\alpha},\vec{\gamma}}\left(\sum_{k=0}^{m}\gamma_k t^{\alpha_0-\alpha_k}\right)\left(\|w_0\|_{L^{2}_{s}(G)}+(1+t)\|w_1\|_{L^{2}_{s-\nu/\alpha_0}(G)}\right),\quad s\geqslant \frac{\nu}{\alpha_0},\nonumber \\
&\displaystyle C_{T,\vec{\alpha},\vec{\gamma}}\left(\sum_{k=0}^{m}\gamma_k t^{\alpha_0-\alpha_k}\right)(1+t^{-\alpha_0})\left(\|w_0\|_{L^{2}_{s-\nu}(G)}+t\|w_1\|_{L^{2}_{s-\nu}(G)}\right), \quad s\geqslant \nu,
\end{array}
\right.
\end{align*}
and we have proved Theorem \ref{thm.multi-wave}.
 \end{proof}
 \section{Examples}\label{examples}
 In this section we provide the reader with examples in the parts \ref{itm:a} and \ref{itm:a1} below of positive self-adjoint operators on a Hilbert space $L^2(X)$ where $X=\mathbb{R}^d$ is a Euclidean space, or $X=M$ a manifold, respectively, and in the part \ref{itm:b} of settings of graded Lie groups. In the latter case we additionally consider differential operators in these settings.

 \begin{enumerate}[label=(\alph*)]
     \item \label{itm:a} \textbf{Self-adjoint operators on Euclidean spaces}
     \begin{itemize}
         \item \textbf{Harmonic oscillator.} For any dimension $d\geqslant 1$, we consider the harmonic oscillator 
     \[
     \mathcal{L}:=-\Delta+|x|^2\,,\quad x \in \mathbb{R}^d\,,
     \]
     where $\Delta$ stands for the Laplace operator. The operator $\mathcal{L}$ is essentially self-adjoint and is densley defined on the Hilbert space $L^2(\mathbb{R}^d)$. Its eigenfunctions are the well-known Hermite functions 
     which are an orthonormal basis of $L^2(\mathbb{R}^d)$ and the corresponding eigenvalues are known as well. 
     \item \textbf{Anharmonic oscillator.} For any dimension $d\geqslant 1$, the anharmonic oscillators can take the form 
     \[
     \mathcal{L}:=q(D)+p(x)\,,\quad x \in \mathbb{R}^d\,,
     \]
     where $q,p: \mathbb{R}^d \rightarrow \mathbb{R}$ are certain polynomial of even degree, see \cite{CDR21,CDR21-2} for a detailed description of anharmonic oscillators of any dimension. A well-studied example of an anharmonic oscillator is given by the formula
     \[
     \mathcal{L}=(-\Delta)^k+|x|^{\ell}\,,
     \]
     where $k,\ell \geqslant 1$ are integers.
     \item \textbf{Landau Hamiltonian in 2 dimensions.} The Landau Hamiltonian is an important example that is classical in physics. In $2D$ it is given by 
     \[
     \mathcal{L}:=\frac{1}{2} \left( \left(i \frac{\partial}{\partial x}-By\right)^2+\left(i \frac{\partial}{\partial y}+Bx \right)^2 \right)\,,
     \]
     where $B>0$ is a positive constant. The operator $\mathcal{L}$ is a self-adjoint operator on $L^2(\mathbb{R}^2)$. Its spectrum is well-known and its eigenvalues have infinite multiplicity, see \cite{F28}, \cite{L30}.
     \item \textbf{Sturm–Liouville equations as self-adjoint differential operators.} We now look at the Sturm–Liouville form in the weighted Hilbert space $L^2([a,b],w(x)\rm{d}x)$ given by the following differential equation:
     \[
     \mathcal{L}(u)=-\lambda w(x)u(x),\quad x\in[a,b],
     \]
     where 
     \[
     \mathcal{L}(u)=\frac{1}{w(x)}\left(\frac{\rm{d}}{\rm{d}x}\bigg[r(x)u^{\prime}(x)\bigg]+s(x)u(x)\right),
     \]
     for some given functions $w,r$ and $s.$ Let us recall that in the considered space the scalar product is 
     \[
     \langle f,g \rangle=\int_a^b \overline{f(x)}g(x)w(x){\rm d}x.
     \]
     Under this product and since $\mathcal{L}$ is defined for sufficiently smooth functions which satisfy good-enough regular boundary conditions, it can be shown that the operator $\mathcal{L}$ is self-adjoint. To get a specific discrete spectrum for this operator, we can just choose some specific given functions.  
     \item \textbf{Second order differential operator with involution.} Let us consider the following differential operator in $L^2(0,\pi)$ given by 
     \[
     \mathcal{L}(v)=v^{\prime\prime}(x)-\epsilon v^{\prime\prime}(\pi-x),\quad 0<x<\pi,\quad\text{for some}\quad |\epsilon|<1,
     \]
     having involution operator in the same space along with the Dirichlet conditions 
     \[
     v(0)=0,\quad \text{and}\quad v(\pi)=0.
     \]
    The operator $\mathcal{L}$ is self-adjoint and the eigenvalues (respectively the eigenfunctions) can be found explicitly for any $|\epsilon|<1$, see e.g. \cite{TT}.  
     \end{itemize}
     \item \label{itm:a1} \textbf{Operators on manifolds.}
     \begin{itemize}
         \item \textbf{Operator with periodic boundary conditions.} Let $M=\overline{\Omega}$ where $\Omega=(0,1)^n$ is a compact manifold. We define the operator
         \[
         \mathcal{L}:=\sum_{j=1}^{n}\frac{\partial}{\partial x_{j}^{2}}\,,
         \]
         on $M$ together with the boundary conditions:
         \begin{equation}\label{BC}
         f(x)|_{x_j=0}= f(x)|_{x_j=1}\,,\quad \text{and}\quad \frac{\partial f}{\partial x_j}(x)|_{x_j=0}=\frac{\partial f}{\partial x_j}(x)|_{x_j=1}\,,\ j=1,\cdots,n\,.
         \end{equation}
         The domain of $\mathcal{L}$ is given by
         \[
         \textnormal{D}(\mathcal{L}):=\{f \in L^2(\Omega)\,:\,\mathcal{L}f \in L^2(\Omega)\,:\, f\quad \text{satisfies}\quad \eqref{BC} \}\,.
         \]
         The operator $\mathcal{L}$ with the boundary conditions given by \eqref{BC} is self-adjoint, and its eigenfunctions are explicitly known;  see \cite{[15]}.
         
         \item \textbf{Operator with non-periodic boundary conditions.} Let $M=(0,1)$ and let 
         \[
         \mathcal{L}:=-i \frac{d}{dx}\,,
         \]
         be an operator on $M$ with domain 
         \[
         \textnormal{D}(\mathcal{L}):=\left\{f \in W_{1}^{2}[0,1]\,:\,af(0)+bf(1)+\int_{0}^{1}f(x)g(x)\,dx=0 \right\}\,,
         \]
         where $a,b \neq 0$, and $q \in C^{1}[0,1]$. The operator $\mathcal{L}$ is self-adjoint and has discrete spectrum. The complete spectral analysis of the operator $\mathcal{L}$ is explicitly known; see \cite{[15]}.
         \item \textbf{Elliptic pseudo differential operators on closed manifolds.} Let $M$ be a compact manifold without boundary (closed manifold), and let $\mathcal{L}$ be a positive elliptic pseudo-differential operator. For this operator, we know that it has discrete spectrum. For more discussions,  see e.g. \cite{closed}.  
     \end{itemize}

     \item \label{itm:b}  \textbf{Graded Lie groups} 
    \begin{itemize}
        \item \textbf{Heisenberg group $\mathbb{H}^n$.} The Heisenberg group $\mathbb{H}^n$ is the manifold $\mathbb{R}^{2n+1}$ endowed with the composition law
        \[
        (x,y,t)(x',y,t'):=(x+x',y+y',t+t'+\frac{1}{2}(xy'-x'y))\,,
        \]
        where $(x,y,t),(x',y,t') \in \mathbb{H}^n$. The canonical basis of its Lie algebra is given by the (left-invariant) vector fields
        \[
        X_j=\partial_{x_j}-\frac{y_j}{2}\partial_t\,,\quad Y_j=\partial_{y_j}+\frac{x_j}{2}\partial_t\,,\quad \text{and}\quad T=\partial_t\,.
        \]
        The sub-Laplacian in this setting is the simplest example of a Rockland operator and is given by
        \[
        \mathcal{R}_{sub}=-\sum_{j=1}^{n}(X_{j}^{2}+Y_{j}^{2})\,.
        \]
        More generally a positive Rockland operator in this setting is given by 
        \[
        \mathcal{R}=\sum_{i=1}^{2n+1}(-1)^{\frac{\nu_0}{\nu_i}}c_i A_{i}^{2\frac{\nu_0}{\nu_i}}\,,\quad c_i>0\,,
        \]
        where $\nu_i \in \{1,2\}$, $\nu_0$ is an even number, and $A_i \in \{X_j,Y_j,T \} $, for all $j=1,\cdots, n$ and $i=1,\cdots, 2n+1$. For the homogeneous dimension $Q$ we have $Q=2n+2$.
        \item \textbf{Engel group $\mathcal{B}_4$.} The Engel group $\mathcal{B}^4$ is the manifold $\mathbb{R}^{4}$ endowed with the composition law
       \begin{eqnarray*}
\lefteqn{(x_1,x_2,x_3,x_4) \times (y_1,y_2,y_3,y_4)}\\
&:=& (x_1+y_1,x_2+y_2,x_3+y_3-x_1y_2,x_4+y_4+\frac{1}{2}x_1^2y_2-x_1y_3)\,.
\end{eqnarray*}
         The canonical basis of its Lie algebra is given by the (left-invariant) vector fields
      \begin{equation*}\label{left.inv.eng}
\begin{split}
X_1(x)&= \frac{\partial}{\partial x_1}\,, \quad X_2(x)=\frac{\partial}{\partial x_2}-x_1 \frac{\partial}{\partial x_3}+ \frac{x^{2}_{1}}{2}\frac{\partial}{\partial x_4}\,, \\
            X_3(x)&= \frac{\partial}{\partial x_3}-x_1 \frac{\partial}{\partial x_4}\,,\quad X_4(x)= \frac{\partial}{\partial x_4}\,,
\end{split}
\end{equation*}
for $x=(x_1,x_2,x_3,x_4) \in \mathcal{B}^4=\mathbb{R}^4$. 
        The sub-Laplacian in this setting is the simplest example of a Rockland operator and is given by
        \[
        \mathcal{R}_{sub}=-\sum_{j=1}^{n}(X_{1}^{2}+X_{2}^{2})\,.
        \]
        More generally a positive Rockland operator in this setting is given by 
        \[
        \mathcal{R}=\sum_{i=1}^{4}(-1)^{\frac{\nu_0}{\nu_i}}c_i X_{i}^{2\frac{\nu_0}{\nu_i}}\,,\quad c_i>0,
        \]
        where $\nu_i \in \{1,2,3\}$ and $\nu_0$ is an even number that is a multiple of $3$.  For the homogeneous dimension $Q$ we have $Q=7$. For a detailed description of the Engel group we refer to \cite{Cha.a} and \cite{Cha.b}.
    \end{itemize}
 
 \end{enumerate}
 \begin{rem}
     Let us point out that in both examples in case \ref{itm:b} we know from \cite{RR18} that the trace of the spectral projections $E_{(0,s)}(\mathcal{R})$ has the following asymptotic behavior for any choice of Rockland operator of homogeneous degree $\nu$
\[
\tau\big(E_{(0,s)}(\mathcal{R})\big)\lesssim s^{Q/\nu},\quad s\to+\infty\,,
\]
 where $Q$ denoted the homogeneous dimension of the group. Hence, our consideration below satisfy the hypothesis \eqref{asymtotic-trace} as in Theorem \ref{Main-heat}. See also Example \ref{ex.graded}.
 \end{rem}

\section{Acknowledgements}
The authors were supported by the FWO Odysseus 1 grant G.0H94.18N: Analysis and Partial Differential Equations and by the Methusalem programme of the Ghent University Special Research Fund (BOF) (Grant number 01M01021). Marianna Chatzakou is supported by the FWO Fellowship grant No 12B1223N. Michael Ruzhansky is also supported by the EPSRC grant EP/R003025/2 and FWO Senior Research Grant G011522N.

\end{document}